\def\bbm{\mathbbm}
\numberwithin{equation}{section}
\newtheorem{theorem}{Theorem}[section]
\newtheorem{lemma}[theorem]{Lemma}
\newtheorem{proposition}[theorem]{Proposition}
\newtheorem{corollary}[theorem]{Corollary}
\newtheorem{definition}[theorem]{Definition}
\newtheoremstyle{remarkstyle}
  {}            
  {}            
  {\upshape}    
  {}            
  {\bfseries}   
  {.}           
  {5pt plus 1pt minus 1pt} 
  {}            
\theoremstyle{remarkstyle}
\newtheorem{remark}[theorem]{Remark}
\newcounter{alphaassumption}
\newtheorem{assume}[alphaassumption]{Assumption}
\newenvironment{keywords}{
    \vspace{1em}
    \noindent\textbf{Keywords:}
    \small
    \itshape
}{}
\newenvironment{msc}{
    \vspace{1em}
    \noindent\textbf{Mathematics Subject Classification (2020).}
    \small
}{}
\newenvironment{acknowledgments}{
    \vspace{1em}
    \noindent\textbf{Acknowledgments:}
    \small
}{}
\def\mathbf{\bm}
\def\T{\trans}
\def\emptyset{\varnothing}
\def\whp{{whp}}
\def\E{\mathbb E}
\def\R{\mathbb R}
\def\N{\mathbb N}
\def\P{\mathbb P}
\def\Var{\mathrm{Var}}
\def\phi{\varphi}
\def\epsilon{\varepsilon}
\def\trans{\top}
\def\sgn{\operatorname{sgn}}
\DeclareMathOperator*{\argmax}{\operatorname{argmax}}
\title{The $\ell_r$-Levy-Grothendieck problem and $r\rightarrow p$ norms of Levy matrices}
\author{Kavita Ramanan and Xiaoyu Xie \\
Brown University}
\date{}                           
\begin{document}
\maketitle
\begin{abstract}
  Given an $n\times n$ matrix $A_n$ and $1\leq r, p \leq\infty$, we consider the following
  quadratic optimization problem referred to as the $\ell_r$-Grothendieck problem: 
\begin{align}
\begin{split}
    M_r(A_n)\coloneqq \max _{\boldsymbol{x} \in \mathbb{R}^n:\|\boldsymbol{x}\|_r \leq 1} \boldsymbol{x}^{\top} A_n \boldsymbol{x},
\end{split}
\end{align}
as well as the $r\rightarrow p$ operator norm of the matrix $A_n$, defined as 
\begin{align}
\begin{split}
    \|A_n\|_{r \rightarrow p}\coloneqq \sup _{\boldsymbol{x} \in \mathbb{R}^n:\|\boldsymbol{x}\|_r \leq 1}\|A_n \boldsymbol{x}\|_p,
\end{split}
\end{align}
where $\|\boldsymbol{x}\|_r$ denotes the $\ell_r$-norm of the vector $\boldsymbol{x}$. 
Both quantities arise in a variety of fields including theoretical computer science, statistical mechanics and asymptotic convex geometry, and are known to be hard to compute or even efficiently approximate in various parameter regimes.  This work focuses on identifying the high-dimensional asymptotics of these quantities when $A_n$ are symmetric random matrices with independent and identically distributed (i.i.d.) heavy-tailed upper-triangular entries with index $\alpha$.  We consider two regimes.  When $1\leq r\leq 2$ (respectively,  $1\leq r\leq p$) and $\alpha\in(0,2)$, we show that a suitably scaled version of $M_r(A_n)$ (respectively, $\|A_n\|_{r\rightarrow p}$) converges to a Fr\'echet distribution. On the other hand, when $2< r<\infty$ (respectively, $1\leq p< r$), we show there exists 
$\alpha_*=\alpha_*(r,p)\in(1,2)$ such that for every $\alpha\in(0,\alpha_*)$, suitably scaled versions of $M_r(A_n)$ and $\|A_n\|_{r\rightarrow p}$ converge to a limit that is a certain  power of a stable distribution.  Furthermore, we show that for each optimization problem, the constant  $\alpha_*$ is tight in the sense that  there exists  $\bar\alpha_*>\alpha_*$ such that whenever $\alpha\in(\alpha_*,\bar\alpha_*)$  the same asymptotic convergence holds only when the matrix entries are centered;   when the matrix entries have non-zero mean,  the corresponding quantity  converges to a different limit, and only after  a different normalization and  additional centering. The analysis uses a combination of tools from the theory of heavy-tailed distributions, the nonlinear power method and concentration inequalities. 
Our results shed light on the differences in the asymptotic behavior of operator norms of heavy-tailed matrices, when compared to both spectral asymptotics for Levy matrices, as well as recent work on  norm  asymptotics for light-tailed random matrices.  
Furthermore, as a corollary of our results we also characterize the limiting ground state of the Levy spin glass model when $\alpha \in (0,1)$.
\end{abstract}

\begin{msc}
Primary: 60B20, 60E07; Secondary: 60F05, 15A60, 46B09 
\end{msc}

\begin{keywords}
  {\em Random matrices, $\ell_r$-Grothendieck problem, $r\rightarrow p$ operator norm, Levy matrices,
  heavy-tailed random variables, stable distribution, high-dimensional asymptotics, Levy spin glasses, power method. }
\end{keywords}

\section{Introduction}

\subsection{Motivation and Context}
\label{subs-motivation}

Given an $n\times n$ matrix $A$, consider the following two nonlinear optimization problems: 
\begin{enumerate}
\item For $1\leq r\leq \infty$, the $\ell_r$-Grothendieck problem concerns the quantity
\begin{equation}\label{opt1}
M_r(A)\coloneqq \max _{\bm x \in \mathbb{R}^n:\|\bm x\|_r \leq 1}\bm x^\T A\bm x,\tag*{\textsf{OPT}\,1}
\end{equation}
where $\|\bm x\|_r\coloneqq \left(\sum_{i=1}^n\left|x_i\right|^r\right)^{1/r}$ denotes the usual $\ell_r$-norm of the vector $\bm x$;  
\item For $1\leq r,p\leq \infty$, the $r\rightarrow p$ operator norm of $A$ is defined as
\begin{equation}\label{opt2} 
\|A\|_{r\rightarrow p}\coloneqq \sup_{\bm x \in \mathbb{R}^n:\|\bm x\|_r\leq1}\|A\bm x\|_p.\tag*{\textsf{OPT}\,2}
\end{equation}
\end{enumerate}
For different choices of  $r$ and $p$, $M_r(A)$ and $\|A\|_{r\rightarrow p}$ correspond to key quantities of interest arising in different fields.
Specifically, $M_2(A)$ represents the largest eigenvalue of the symmetric matrix $(A+A^\T)/2$, where $A^\T$ denotes the transpose of $A$. In the context of clustering, the $\ell_r$-Grothendieck problem for $r=2$ and $r=\infty$ is related to spectral partitioning \cite{donath1973lower,fiedler1973algebraic} and correlation clustering \cite{CW04}, respectively, and  for general $r\in(2,\infty)$,  can be viewed as a smooth interpolation between these two clustering measures. 
When $A$ is diagonal free, and has off-diagonal entries that are independent and identically distributed (i.i.d.) standard Gaussian random variables, $M_r(A)$ is equal to the ground state energy of the Sherrington-Kirkpatrick spin glass model with spins coming from the $\ell_r$-sphere and couplings given by the entries of $A$ (see the discussion in \cite{ChenSen20} after (1.2) as well as the discussion in \Cref{s:main} prior to \Cref{cor:ground}).

Turning to the $r\rightarrow p$ operator norm, $\|A\|_{2\rightarrow 2}$ is equivalent to the largest singular value of $A$. 
In the scenario where $r<p$ and $A$ represents the normalized adjacency matrix of a graph (or, more generally, of a Markov operator), upper bounds on $\|A\|_{r\rightarrow p}$ are recognized as hypercontractive inequalities, which  are instrumental in demonstrating rapid mixing properties of  associated random walks (see \cite[Section~2.2]{gine1997lectures}). 
Additionally, the computation of $\|A\|_{2\rightarrow 4}$ is closely related to  small-set expansion and the injective tensor norm, both of which are important in theoretical computer science \cite{barak2012hypercontractivity,brandao2015estimating}.
Other application domains where the $\|A\|_{r\rightarrow p}$ norm arises include the development of oblivious routing schemes in transportation networks for the $\ell_p$ norm \cite{BV11,englert2009oblivious,GHR06,racke2008optimal}, and data dimensionality sketching for $r\rightarrow p$ norms, which has applications to data streaming models and robust regression \cite{KMW18}.
The special case $\|A\|_{p\rightarrow p}$ also appears in the estimation of matrix condition numbers \cite{Hig87}.

It is worth pointing out that the two optimization problems are connected; the quantity
$M_r(A)$ of \ref{opt1} is closely related to $\|A\|_{r\rightarrow p}$ in \ref{opt2} 
in the special case $p=r^*\coloneqq r/(r-1)$.  Indeed, 
given any $\bm x\in\mathbb R^n$ with $\|\bm x\|_r\leq 1$, using the well-known duality representation $\|A\bm x\|_{p}=\sup_{y\in\R^n:\left\|\bm y\right\|_{p^*}\leq 1}\bm y^\top A\bm x$ (see for example \cite[Proposition~6.13]{folland1999real}) 
it follows that 
\begin{equation}\label{eq:rtop-dual}
\begin{aligned}
    \|A\|_{r\rightarrow p}=\sup_{\bm x,\bm y\in\mathbb R^n:\|\bm x\|_{r}\leq 1,\|\bm y\|_{p^*\leq 1}}\bm y^\T A\bm x. 
\end{aligned}
\end{equation}
Thus, in the case $p = r^*$, we have $p^* = r$, and comparing \ref{opt1} with  \eqref{eq:rtop-dual}, we see  that 
\begin{equation}\label{ineq:dual} 
\begin{aligned}
\|A\|_{r\rightarrow r^*}\geq M_r(A).
\end{aligned}
\end{equation}
In  specific instances, such as when $r \geq 2$, $A$ has nonnegative entries, and $A^\T A$ is irreducible,  $M_r(A)$ is equal  to $\|A\|_{r\rightarrow r^*}$  (see \cite[Proposition~2.14]{dhara2020r}). However, in general, these two quantities need not coincide. 

The computational complexity of both optimization problems 
has attracted immense attention in the theoretical computer science community. 
For $r\in(2,\infty)$, while there exists a polynomial time algorithm to approximate $M_r(A)$ within a factor of $\xi_r^2$, where $\xi_r\coloneqq\E\left[|Z|^r\right]^{1/r}$, with $Z$ being a standard normal random variable,   
no polynomial-time algorithm can approximate $M_r(A)$ within a factor strictly less than $\xi_r^2$ unless P$=$NP \cite{khot2011grothendieck}.  
When $A$ is a nonnegative matrix and $r\geq p$, efficient algorithms such as the nonlinear power method by Boyd \cite{boyd1974power} can compute $\|A\|_{r\rightarrow p}$ in polynomial time.  
However, for general matrices $A$, when $r\geq 2 \geq  p \geq 1$, it is NP-hard to approximate $\|A\|_{r\rightarrow p}$ to within any factor smaller than $1/(\xi_{p}\xi_{r/(r-1)})$ \cite{bhattiprolu2018inapproximability}. And when either  $r\geq p$ and $2\notin [p,r]$, or $r < p$,  it is NP-hard to 
approximate  $\|A\|_{r\rightarrow p}$ to within any constant factor \cite{bhattiprolu2018inapproximability}. 
The  challenges associated with computing $M_r(A)$ and $\|A\|_{r\rightarrow p}$ lead naturally to the following  questions: \\

\noindent 
{\bf Q.} Is the  computation and approximation of the  quantities  $M_r(A)$ and $\|A\|_{r\rightarrow p}$ also hard for typical matrices, such as random matrices with i.i.d. entries, modulo symmetry?  Can one characterize suitably scaled asymptotic limits of these quantities? How does the answer depend on the structure of the matrix?  

In Section \ref{subs-prior} we discuss prior work that sheds some light on this question, then in Section \ref{subs-summaryofres} summarize our main results, and in Section \ref{s:notat} introduce common notation used throughout the paper. 
 
\subsection{Discussion of Related Prior Work}
\label{subs-prior}

While classical work on random matrices has mostly focused on the spectral setting, 
recent years have witnessed a resurgence in interest in operator norms of random matrices.
 One line of inquiry considers non-asymptotic bounds. 
In early work \cite{bennett1975norms}, Bennett obtained  
an upper bound for the expected $2 \to p$ norm for 
matrices with independent, mean-zero entries lying in $[-1,1]$ when $p > 2$. 
He subsequently extrapolated these results to obtain bounds on 
$r \to p$ norms in the full range  $1\leq r,p\leq \infty$.
\cite{bennett1977schur}.  
More recently, Lata{\l}a and Strzelecka \cite{latala2023chevet,latala2024operator} established matching two-sided nonasymptotic bounds for the expected $r \to p$ norm, with $r,p\in[1,\infty]$, 
for matrices with  i.i.d. entries drawn from a large class of light-tailed distributions. 
For matrices with independent (but not necessarily identical)
mean-zero Gaussian entries,  
two-sided bounds that  match up to a
logarithmic factor depending on the dimension of the matrix 
were derived by Adamczak et al  \cite{adamczak2023norms}. 
It was also  conjectured in \cite{adamczak2023norms} that this logarithmic factor could be removed to yield a dimension-free  estimate of the $r\rightarrow p$ operator norm of inhomogeneous Gaussian random matrices.

The other line of inquiry focuses on high-dimensional asymptotic
limits of  $\|A\|_{r\rightarrow p}$ and $M_r(A)$.  
When $r = 2$,  $M_2(A)$ is equal to the largest eigenvalue of $\frac{A+A^\T}{2}$, and falls 
into the well-studied classical setting of spectral norms,  
When $A$ has i.i.d. centered Gaussian entries,  the 
asymptotic fluctuations of $M_2(A)$ are well known to be governed by the celebrated Tracy-Widom distribution \cite{tracy1996orthogonal} (see also the survey paper \cite{erdHos2011universality}). 
On the other hand, for matrices with i.i.d. nonnegative entries with bounded support and positive mean, the fluctuations of $M_2(A)$ where shown to be Gaussian by F\"{u}redi and Koml\'{o}s. 
\cite{FurKom81}.  A generalization of the latter result to operator norms was established by Dhara et al \cite{dhara2020r} for matrices $A$ that are symmetric, nonnegative random matrices with i.i.d. light-tailed upper triangular entries. Specifically, leveraging 
the (nonlinear) power method of Boyd \cite{boyd1974power}, they identified the almost sure limit of suitably normalized $\|A\|_{r\rightarrow p}$ norms when $r\geq p$ as well as $M_r(A)$ when $r \geq 2$, and also showed that the fluctuations are Gaussian and identified the variance. 
On the other hand, when $A$ is an $n \times n$ matrix composed of $n^2$ i.i.d. standard Gaussian random variables, Chen and Sen 
 \cite{ChenSen20} used Chevet's inequality to examine 
the almost sure limit of an appropriately scaled version of $M_r(A)$.  They discovered that the scaling behavior of $M_r(A)$ undergoes a phase transition at the ``spectral threshold'' $r=2$:  
for $r\in(1,2)$, $M_r(A)$ scales as $n^{(r-1)/r}$, and for $r\in[2,\infty]$, $M_r(A)$  scales as $n^{3/2-2/r}$.  Moreover, they also characterized the almost sure limit of  $M_r(A)$ (suitably normalized) for all $r\geq 1$, 
in particular showing that in the sub-regime $r > 2$  it can be characterized via a variant of the Parisi formula for the free energy of the Sherrington-Kirkpatrick spin glass model. 
Going beyond almost sure limits, 
characterization of the fluctuations of $M_r(A)$ for matrices with i.i.d. centered Gaussian entries  (beyond the spectral case  $r = 2$)  remains an interesting open problem. 

As can be seen from the discussion above, the majority of work on general operator norms of random matrices thus far has focused on the case of light-tailed entries. 
Work on random matrices with heavy-tailed entries has been essentially restricted to the spectral setting:  
 the largest eigenvalue of $A$ is known to follow a Fr\'{e}chet distribution when the index $\alpha$ of the tail lies in the interval  $(0,4)$ \cite{Sosh04,AAP09}.  
 Thus,   in contrast to their light-tailed counterparts, random matrices with heavy-tailed entries exhibit very different properties.

\subsection{Summary of Results and Open Problems}
\label{subs-summaryofres}

Our work takes a first step towards understanding the asymptotics of non-spectral norms and the $\ell_r$-Grothendieck problem in the heavy-tailed setting.  In contrast to the current state of art for Gaussian matrices, it is possible to identify both first and second-order asymptotics in certain regimes. 
Let $A_n=(a_{ij})_{i,j\in[n]}$ be a symmetric $n \times n$  matrix consisting of i.i.d. heavy-tailed upper triangular entries with  index $\alpha\in(0,2)$; see \Cref{d:heavy}.  
Our main results can be broadly categorized into two distinct regimes: 
\begin{enumerate}
\item Suppose $1\leq r\leq 2$ for $M_r(A_n)$ and $1\leq r\leq p$ for $\|A_n\|_{r\rightarrow p}$, and $\alpha\in(0,2)$. Then we show that suitably scaled versions of the quantities $M_r(A_n)$ and $\|A_n\|_{r\rightarrow p}$ 
converge weakly, as $n \rightarrow \infty$,  to a Frech\'et distribution  (see \Cref{thm:gro1} and \Cref{thm:rtop1}).  
\item Suppose $2<r\leq\infty$ for $M_r(A_n)$ and $1\leq p<r\leq \infty$ for $\|A_n\|_{r\rightarrow p}$, 
and $\alpha$ lies in $(0,\alpha_*)$ for a certain explicit constant $\alpha_*=\alpha_*(r)$ or $\alpha_*=\alpha_*(r,p)$ lying in the interval $(1,2)$. 
Then 
both quantities, suitably scaled, converge to a certain power of a  stable distribution  (see \Cref{thm:gro2} and \Cref{thm:rtop2}).
\end{enumerate}
Furthermore, we establish the following results that  
show that the choice of $\alpha_*$ in Case 2 is in a sense optimal: 
\begin{enumerate}
\item[3.] Suppose that either $2<r<\infty$ for $M_r(A_n)$ or $1\leq r<p\leq \infty$ for $\|A_n\|_{r\rightarrow p}$, with the corresponding constant $\alpha_*\in(1,2)$ as in Case 2.
  Then there exist (explicit) constants  $\bar\alpha_{*,1}$
  and   $\bar\alpha_{*,2}$ that both lie in the interval $(\alpha_{*},2)$, and depend on $r$, or both $r$ and $p$ for \ref{opt2}, 
   such that the following properties are satisfied:
 \begin{itemize}
\item If the entries of $A$ have zero mean and $\alpha\in(\alpha_*,\bar\alpha_{*,1})$, then the same convergence results hold as in Case 2  (see \Cref{thm:gro2'} and \Cref{thm:rtop2'}), although the proof slightly differs. 
\item On the other hand, if the entries of  $A$ have positive mean for $M_r(A)$, or nonzero mean for $\|A\|_{r\rightarrow p}$, and $\alpha\in(\alpha_*,\bar\alpha_{*,2})$,
  then the asymptotic fluctuations of  $M_r(A)$ and $\|A\|_{r\rightarrow p}$, after a different normalization and an additional centering,  converge to stable distributions (see \Cref{thm:gro3} and \Cref{thm:rtop3}).
\end{itemize}
\end{enumerate}
Also, as an immediate consequence
of our results,  
\begin{itemize} 
\item[4.] 
we characterize the ground state of the Levy spin glass model  from
statistical physics when $\alpha \in (0,1)$ (see \Cref{cor:ground}). 
\end{itemize}

As elaborated in  \Cref{s:main}, 
the proofs proceed by decomposing the matrix into a sum of matrices with small, intermediate and large entries (in absolute value), suitably defined, and to then show that only the large entries dominate and characterize their limit.  This is done by exploiting a combination of tools from the theory of heavy tails, random matrix theory, concentration estimates as well as analysis of the nonlinear optimization problems  to arrive at suitable upper and lower bounds on the $r \rightarrow p$ norm of the dominant matrix in the decomposition that is then shown to coincide in the asymptotic limit.  The latter estimates are quite subtle and distinguish the analysis  from that in the spectral setting.

Our results on the $\ell_r$-Levy-Grothendieck problem in the heavy-tailed setting are summarized in \Cref{fig:combined}(a) and \Cref{fig:combined}(b) for the case of centered and non-centered entries, respectively.  
An interesting open problem is to identify the asymptotic fluctuations in the orange regimes in these
two figures.  
This regime consists of intervals of $\alpha$ exceeding a specific  threshold where  the tails of the entries are not as heavy,  with this threshold decreasing with increase in  $r$, though always remaining above $1$.   In this regime, the fluctuations are  determined by both small and large (in absolute value) entries rather than just the latter, making the analysis more subtle. 
In fact, the influence of  smaller entries on fluctuations already starts to manifest
itself in the third set of results described above, in the form of the difference in scaling factors for non-centered entries. 
The fact that the regime with lighter tails is more difficult 
is  consistent with the fact that,  as mentioned in Section \ref{subs-prior}, even for centered Gaussian i.i.d. entries, the asymptotic fluctuations of $M_r(A_n)$ are not
well understood beyond the spectral case $r = 2$.  
It is also worth mentioning that the change in scalings for different $\alpha$ regimes and the difference between  centered and non-centered entries are not observed in the spectral case $r=2$ in the regime  $\alpha \in (0,2)$, 
where, as mentioned at the end of \Cref{subs-prior}, the properly scaled largest eigenvalue $M_2(A_n)$ always converges to the Fr\'echet distribution. Although, it is worth mentioning that such a this distinction in  the asymptotics of centered and non-centered matrices does manifest itself for the study of a particular quantity in the spectral regime when $\alpha \in (2,4)$ \cite{AAP09}. 

\begin{figure}[ht!]
    \centering
    \begin{subfigure}[b]{0.45\textwidth}
        \centering
        \includegraphics[width=\textwidth]{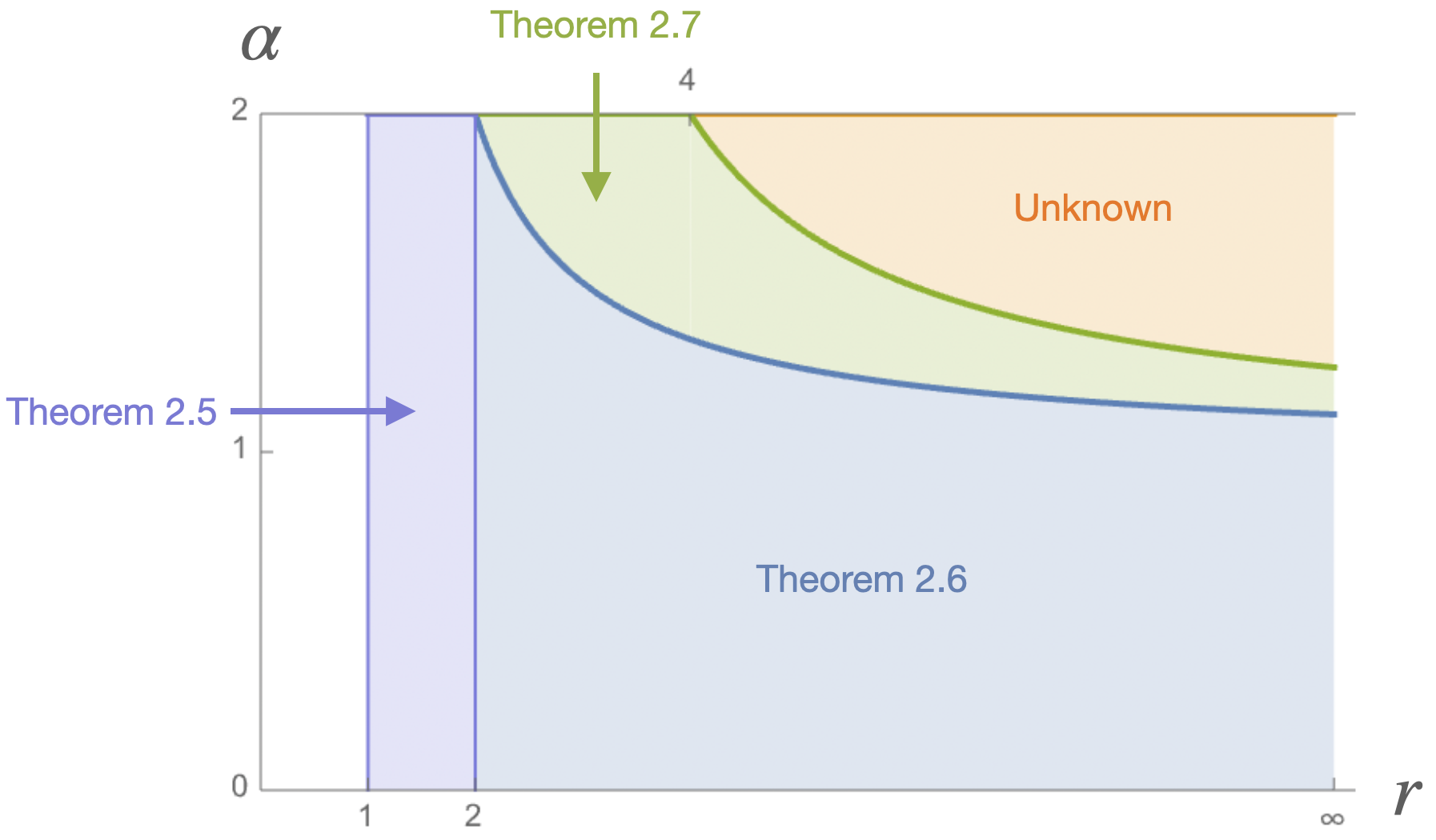}
        \caption{Entries with zero mean}
        \label{fig:sub1}
    \end{subfigure}
    \hfill
    \begin{subfigure}[b]{0.45\textwidth}
        \centering
        \includegraphics[width=\textwidth]{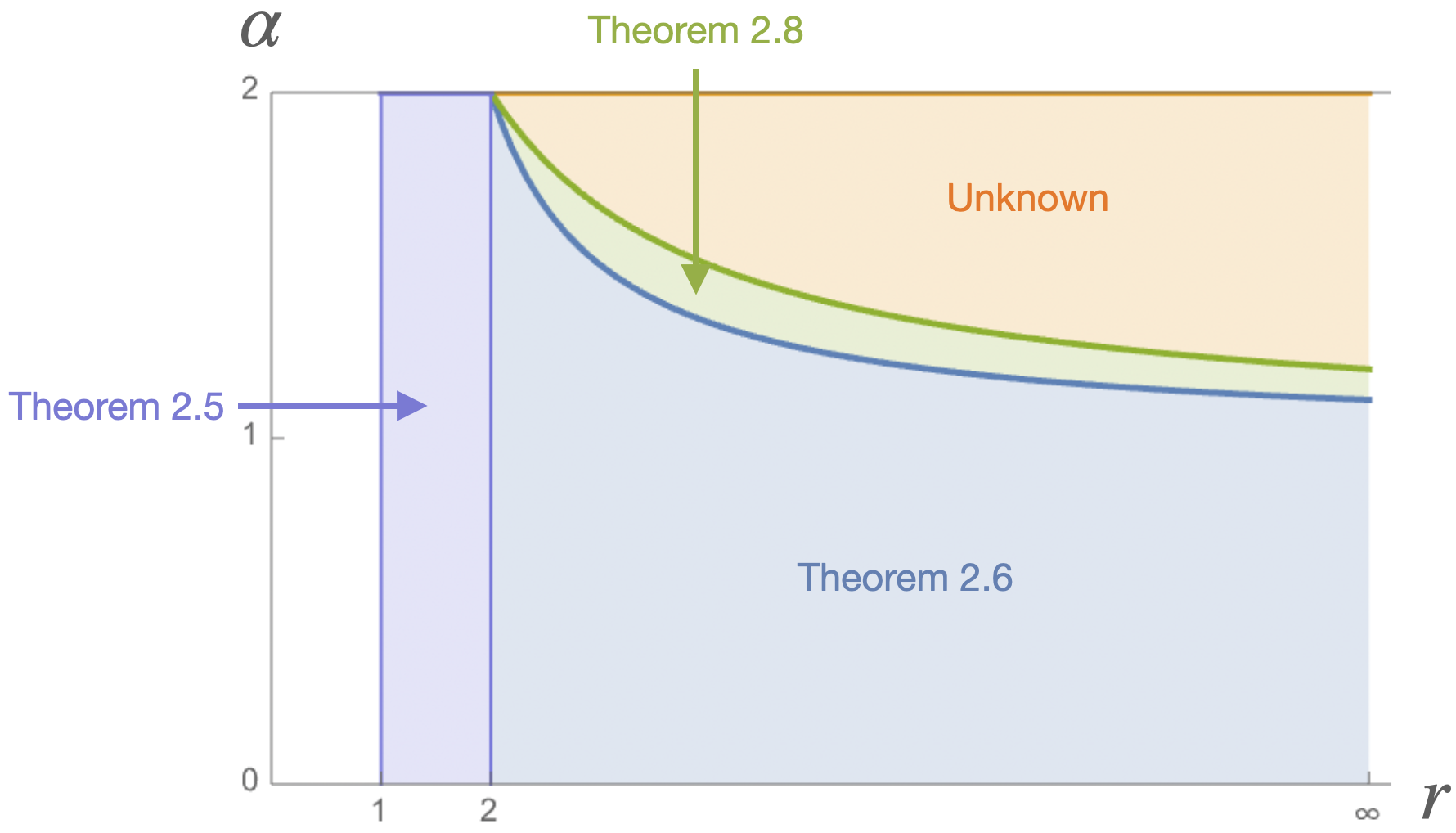}
        \caption{Entries with nonzero mean}
        \label{fig:sub2}
    \end{subfigure}
    \caption{}
    \label{fig:combined}
\end{figure}

\subsection{Common Notation}
\label{s:notat}

For any positive integer $n$, let $[n]$ denote the set $\{1,\ldots,n\}$.
For a sequence of random variables $\{Y_n\}_{n\in\N}$ and a random variable $Y$, we denote the convergence in distribution of $Y_n$ to $Y$ by $Y_n\xrightarrow{(d)}Y$. 
Given a probability space $\left(\Omega,\mathcal F,\mathbb P\right)$, the events $E_n\in\mathcal F,n\in\N$ are said to occur \emph{with high probability} (\whp)\ if $\P(E_n)\rightarrow 1$ as $n\rightarrow\infty$. 
For a sequence of random variables $(X_n)_{n\in\N}$ and positive deterministic constants $(k_n)_{n\in \N}$:
\begin{enumerate}
\item We say $X_n=O(k_n)$ \whp\ if there exists a constant $C>0$ such that the events $\{|X_n|\leq Ck_n\}_{n\in\N}$ occur \whp.  We say $X_n=O_\delta(k_n)$ \whp\ if for any $\delta>0$ the events $\{|X_n|\leq n^\delta k_n\}_{n\in\N}$ occur \whp.
\item We say $X_n=o(k_n)$ \whp\ if $k_n^{-1}X_n$ converges to $0$ in probability.
\item We say $X_n=\Theta_\delta(k_n)$ \whp\ if for any $\delta>0$ the events $\{n^{-\delta}k_n\leq |X_n|\leq n^\delta k_n\}_{n\in \N}$ occur \whp.
\end{enumerate}
For a sequence of deterministic quantities $\{D_n\}_{n\in\N}$ and positive deterministic constants $(k_n)_{n\in \N}$:
\begin{enumerate}
\item We say $D_n=O(k_n)$ if there exists a constant $C>0$ such that $\limsup_{n\rightarrow\infty}k_n^{-1}|D_n|\leq C$.  We say $D_n=O_\delta(k_n)$ if for any $\delta>0$, $\lim_{n\rightarrow \infty}n^{-\delta}k_n^{-1}D_n=0$.
\item We say $D_n=o(k_n)$ if $\lim_{n\rightarrow\infty}k_n^{-1}D_n=0$.
\item We say $D_n=\Theta_\delta(k_n)$ if for any $\delta>0$, $\lim_{n\rightarrow\infty}n^{-\delta}k_n^{-1}D_n=0$ and $\lim_{n\rightarrow\infty}n^{\delta}k_n^{-1}|D_n|=\infty$.
\end{enumerate}
Also, given nonnegative functions $f,g$ defined on $[0,\infty)$, we say $f\ll g$ (respectively, $f\gg g$) if $f(x)/g(x)\rightarrow0$ (respectively, $g(x)/f(x)\rightarrow0$) as $x\rightarrow \infty$.

Given an $n\times n$ matrix $A$, we denote its transpose by $A^\T$ and the matrix obtained by taking the absolute value of each entry in $A$ by $|A|$. Given a vector $\bm v=(v_1,\ldots,v_n)\in\mathbb R^n$ with positive entries and for $p\in\mathbb R$, we define $\bm v^p\coloneqq (v_1^p,\ldots, v_n^p)$. 
 The $p$-norm of a vector $\bm x=\left(x_i\right)_{i\in[n]}\in\mathbb R^n$ is defined as 
\begin{align}
\begin{split}
    \|\bm x\|_p\coloneqq \left(\sum_{i=1}^n\left|x_i\right|^p\right)^{\frac{1}{p}}.
\end{split}
\end{align}
For $p\in(1,\infty)$, we denote its H\"older conjugate by $p^*\coloneqq p/(p-1)$. Additionally, we write $1^*=\infty$ and $\infty^*=1$.

We use $\sgn:\mathbb R\rightarrow \mathbb R$ to denote the sign function, where $\sgn(t)=-1,1$ and $0$ for $t<0,t>0$ and $t=0$, respectively.
For $q>1,\ t \in \mathbb{R}$ and $\bm x=(x_1,\ldots,x_n) \in \mathbb{R}^{n}$, we define $\Psi_q:\mathbb R^n\rightarrow\mathbb R^n$ as follows: for $i\in[n]$,
\begin{equation}\label{def:psi}
\left(\Psi_{q}(\bm x)\right)_i\coloneqq\psi_{q}\left(x_{i}\right),\quad\text{where}\ \ 
\psi_{q}(t)\coloneqq|t|^{q-1} \operatorname{sgn}(t).
\end{equation}

\begin{acknowledgments}
K.R. and X.X. were supported in part by the Office of Naval Research under the Vannevar Bush Faculty Fellowship NR-N0014-21-1-2887 and 
by the National Science Foundation under grants DMS-1954351 and DMS-2246838.
\end{acknowledgments}
\section{Main results}\label{s:main}
In \Cref{ss:prelim} we state our main  assumption on the random matrix after introducing some basic definitions related to heavy-tailed distributions. In \Cref{ss:gro-results} and \Cref{ss:rtop-results}, we present our main results concerning \ref{opt1} and \ref{opt2},  respectively, along with a discussion of the proofs. 
\Cref{ss:organization} summarizes the rest of the paper.

\subsection{Assumption on the random matrix}\label{ss:prelim}
We start with the definition of a slowly varying function. 

\begin{definition}[Slowly Varying Function]
A function $L:\mathbb (0,\infty)\rightarrow (0,\infty)$ is said to be a \emph{slowly varying function} if it satisfies
\begin{align}\label{eqn:slowly-varying}
\begin{split}
    \lim _{x \rightarrow \infty} \frac{L(t x)}{L(x)}=1,\quad \forall\, t>0.
\end{split}
\end{align}
\end{definition}
The next lemma states that slowly varying functions grow (or decay) asymptotically slower than any polynomial.
\begin{lemma}{\cite[Lemma~2.7]{nair2013fundamentals}}\label{l:slowly-varying-property}
If the function $L:(0,\infty)\rightarrow (0,\infty)$ is slowly varying, then
\begin{align}\label{eqn:slowly-varying-property}
\begin{split}
    x^{-\delta} \ll L(x) \ll x^\delta,\quad \forall\,\delta>0.
\end{split}
\end{align}
\end{lemma}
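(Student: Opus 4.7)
The plan is to reduce both inequalities to a multiplicative bound of the form $L(2x)/L(x) \leq 1 + \epsilon$ (and $\geq 1-\epsilon$) valid for all sufficiently large $x$, and then iterate geometrically along a dyadic chain. The key enabling tool is the Uniform Convergence Theorem for slowly varying functions: the convergence in \eqref{eqn:slowly-varying} holds uniformly in $t$ over any compact subset of $(0,\infty)$. I would either cite this standard result (e.g.\ from Bingham-Goldie-Teugels) or, if the paper prefers self-containment, sketch its derivation from the assumed (measurable) slow variation. Given the uniform convergence on $[1,2]$, for any prescribed $\eta>0$ there exists $x_0=x_0(\eta)\geq 1$ such that
\begin{equation*}
1-\eta \;\leq\; \frac{L(tx)}{L(x)} \;\leq\; 1+\eta, \qquad \forall\, x\geq x_0,\ t\in[1,2].
\end{equation*}

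Fix $\delta>0$ and choose $\eta\in(0,1)$ small enough that $\log(1+\eta)/\log 2<\delta$ and $-\log(1-\eta)/\log 2<\delta$. Given any $x\geq x_0$, write $x=2^k y$ with integer $k\geq 0$ and $y\in[x_0,2x_0)$. Applying the telescoping identity and the two-sided bound above to each factor gives
\begin{equation*}
L(x) \;=\; L(y)\prod_{j=0}^{k-1}\frac{L(2^{j+1}y)}{L(2^j y)},
\end{equation*}
so $(1-\eta)^k L(y)\leq L(x)\leq (1+\eta)^k L(y)$. Since $k\leq \log_2(x/x_0)$, the upper factor $(1+\eta)^k\leq (x/x_0)^{\log_2(1+\eta)}$ is bounded by a constant times $x^\delta$, and an analogous inequality holds below with exponent $-\delta$. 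To complete the argument I would note that $L$, being measurable and finite on $(0,\infty)$, is bounded above and below by positive constants on the compact set $[x_0,2x_0)$; this is itself a known consequence of uniform convergence (applied at a single point, $L$ is locally bounded away from $0$ and $\infty$ for large arguments). Combining, there exist constants $0<c\leq C<\infty$ such that $c x^{-\delta}\leq L(x)\leq C x^\delta$ for all $x\geq x_0$, which is precisely \eqref{eqn:slowly-varying-property} in the $\ll$ notation.

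The main obstacle is really the Uniform Convergence Theorem itself: passing from the pointwise statement \eqref{eqn:slowly-varying} to uniform convergence on compacts requires a non-trivial measurability (or monotonicity, or similar regularity) hypothesis and a Baire-category/Steinhaus-type argument. Everything after that is clean: the dyadic iteration is elementary, and the choice of $\eta$ translates a bound of ratio $1+\eta$ into a polynomial growth exponent $\log_2(1+\eta)$ that can be made arbitrarily small. Since the paper will apply \Cref{l:slowly-varying-property} only to tail-probability slowly varying functions arising from heavy-tailed distributions (which are automatically measurable), citing the uniform convergence theorem from a standard reference and focusing the written proof on the dyadic bound seems the most efficient route; alternatively, one could instead invoke the Karamata representation $L(x)=c(x)\exp\!\bigl(\int_a^x \epsilon(t)/t\,\mathrm{d}t\bigr)$ with $c(x)\to c>0$ and $\epsilon(t)\to 0$, from which $x^{-\delta}\ll L(x)\ll x^\delta$ is immediate because the exponent is $o(\log x)$.
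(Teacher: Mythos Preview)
Your argument is correct: the uniform convergence theorem plus dyadic telescoping yields the bound, and the Karamata representation gives an even shorter alternative. However, the paper does not supply a proof at all; it simply cites the result as \cite[Lemma~2.7]{nair2013fundamentals} and moves on. So there is nothing to compare at the level of technique: you have written out a standard proof where the paper opts for a bare citation. If anything, your write-up is more than the paper requires; a one-line reference to Bingham--Goldie--Teugels or to the Karamata representation would match the paper's level of detail.
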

\begin{definition}[Heavy-tailed random variable]\label{d:heavy}
A random variable $X$ is said to be \textnormal{heavy-tailed} with index $\alpha>0$ if it satisfies
\begin{align}\label{ccdf}
\begin{split}
    \bar F(x)\equiv \bar F_\alpha(x)\coloneqq\P\left(|X|>x\right)= x^{-\alpha}L(x),\quad x> 0,
\end{split}
\end{align}
where $L$ is a slowly varying function. 
\end{definition}

Stable random variables are a special class of heavy-tailed random variables. 
\begin{definition}[Stable random variable]
\label{def:stable}
For $\alpha\in(0,2)$, a nondegenerate random variable $X$ is said to be $\alpha$-stable if $X=aZ+b$, where $a\neq 0$, $b\in\mathbb R$, and $Z$ is a random variable whose characteristic function takes the following form, for some $\beta\in[-1,1]$:
\begin{align}
\begin{split}
    \E \left[\exp (i t Z)\right]= \begin{cases}\exp \left(-|t|^\alpha\left[1-i \beta \tan \frac{\pi \alpha}{2}(\operatorname{sgn} t)\right]\right), & \alpha \neq 1, \\ \exp \left(-|t|\left[1+i \beta \frac{2}{\pi}(\operatorname{sgn} t) \log |t|\right]\right), & \alpha=1.\end{cases}
\end{split}
\end{align}
\end{definition}

 Throughout, we consider sequences of random matrices satisfying the following assumption.
\begin{assume}\label{assume}  There exists 
a heavy-tailed random variable with index $\alpha \in (0,2)$ such that 
for each $n \in \N$, $A_n$ is a symmetric $n \times n$ random matrix whose upper-triangular entries are i.i.d. copies of  that heavy-tailed random variable. 
\end{assume}

Fix $\alpha\in(0,2)$. Given the complementary cumulative distribution function $\bar F_\alpha$ of the absolute value of the heavy-tailed distribution with index $\alpha$,  define
\begin{equation}\label{d:b}
b_n\equiv b_{n,\alpha}\coloneqq\inf \left\{x\geq0: \bar F_\alpha(x) \leq \frac{2}{n(n+1)}\right\},\quad n\in\mathbb N.
\end{equation}
It can be shown that (see \cite[(3.8.6)]{Durrett19})
\begin{equation}\label{eqn:property-slowly-varying}
\bar F_\alpha(b_{n,\alpha})\cdot\frac{n(n+1)}{2}\rightarrow1,\text{ as }n\rightarrow\infty,
\end{equation}
which implies that there exists another slowly varying function $L_o$ such that
\begin{equation}\label{eqn:order-bn}
b_{n,\alpha} = L_{o}(n) n^{2 / \alpha},
\end{equation}
For notational simplicity, we often drop the $\alpha$ subscript whenever it is clear from the context.


\subsection{The Levy-Grothendieck \texorpdfstring{$\ell_r$}{PDFstring} problem and the ground state of Levy spin glasses}\label{ss:gro-results}
 
Our first set of results concerns the $\ell_r$-Grothendieck problem (\ref{opt1}) for  sequences of random matrices satisfying \Cref{assume}. 

\begin{theorem}\label{thm:gro1}
Fix $1\leq r\leq 2$. Suppose the sequence $A_n=\left(a_{ij}\right)_{i,j\in[n]}$, $n \in \N$, satisfies \Cref{assume} with $\alpha\in(0,2)$. Then with $b_n$ as defined in \eqref{d:b}, we have
\begin{align}
\begin{split}
    \lim _{n \rightarrow \infty} \mathbb{P}\left(b_{n}^{-1}2^{\frac{2}{r}-1}M_r(A_n) \leq x\right)=\exp \left(-x^{-\alpha}\right).
\end{split}
\end{align}
\end{theorem}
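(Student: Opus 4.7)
The plan is to prove $2^{2/r-1}M_r(A_n) = \max_{i<j}|a_{ij}|+o_p(b_n)$ and then invoke the classical Fr\'echet limit theorem for the maximum of the $n(n+1)/2$ i.i.d.\ heavy-tailed entries rescaled by $b_n$. For the \emph{lower bound} I test against the $2$-sparse vector $\bm v = 2^{-1/r}(\bm e_{i^*}+\sgn(a_{i^*j^*})\bm e_{j^*})$, where $(i^*,j^*)$ with $i^*<j^*$ attains $\max_{i<j}|a_{ij}|$; since $\|\bm v\|_r=1$, a direct expansion gives $\bm v^\top A_n\bm v = 2^{1-2/r}|a_{i^*j^*}|+ 2^{-2/r}(a_{i^*i^*}+a_{j^*j^*})$, and the diagonal correction is $o_p(b_n)$ because $\max_i|a_{ii}|$ is the maximum of only $n$ heavy-tailed samples, hence $O_p(n^{1/\alpha})=o(b_n)$. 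Thus $M_r(A_n)\geq 2^{1-2/r}\max_{i<j}|a_{ij}|+o_p(b_n)$.

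For the \emph{upper bound} I decompose $A_n=L_n+S_n$ at a threshold $\delta b_n$ with $\delta\in(0,1)$ fixed (to be sent to $0$ at the end), where $L_n$ retains entries with $|a_{ij}|>\delta b_n$. Control of $S_n$ uses the Frobenius norm: since $r\leq 2$ implies $\|\bm x\|_2\leq\|\bm x\|_r\leq 1$, Cauchy--Schwarz for the Frobenius inner product gives
\[
|\bm x^\top S_n\bm x| = |\langle S_n,\bm x\bm x^\top\rangle_F| \leq \|S_n\|_F\,\|\bm x\|_2^2 \leq \|S_n\|_F.
\]
Using the truncated second-moment estimate $\E[X^2\mathbf 1\{|X|\leq\delta b_n\}] \asymp (\delta b_n)^{2-\alpha}L(\delta b_n)$ for $\alpha\in(0,2)$ together with the defining relation $n^2 b_n^{-\alpha}L(b_n)\sim 2$, a sum-of-bounded-variables concentration argument yields $\|S_n\|_F= O_p(\delta^{(2-\alpha)/2}b_n)$; the same bound applies to $M_r(-S_n)$, so $M_r(\pm S_n) \leq C\delta^{(2-\alpha)/2}b_n$ with high probability, which tends to $0$ as $\delta\downarrow 0$.

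For $L_n$, the expected number of nonzero entries is $\sim \delta^{-\alpha}$, so $L_n$ has $O_p(1)$ nonzero entries, and a birthday-style union bound shows that these entries occupy pairwise disjoint index pairs with probability tending to $1$. Also $\max_i|a_{ii}| = O_p(n^{1/\alpha}) = o(\delta b_n)$, so for fixed $\delta>0$ no diagonal entries survive in $L_n$ for large $n$. For such a sparse symmetric matrix with disjoint pairs $\{(i_k,j_k)\}$ and values $\{m_k\}$, the key pointwise inequality is
\[
|xy|\leq 2^{-2/r}(|x|^r+|y|^r)^{2/r},\qquad r\geq 1,
\]
which reduces to $(t^r-1)^2\geq 0$ via $t=|x/y|$. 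Summing over the disjoint pairs with $s_k = |x_{i_k}|^r+|x_{j_k}|^r$ gives $|\bm x^\top L_n\bm x|\leq 2^{1-2/r}\sum_k|m_k|\,s_k^{2/r}$, and because $\sum_k s_k\leq 1$ and $s\mapsto s^{2/r}$ is convex for $r\leq 2$, the right-hand side is maximized at a vertex of the simplex, producing $\max_k|m_k|$. Combined with the $2$-sparse lower bound this yields the sharp identity $M_r(L_n) = 2^{1-2/r}\max_k|m_k|$.

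Assembling these estimates via subadditivity $M_r(L_n)-M_r(-S_n)\leq M_r(A_n)\leq M_r(L_n)+M_r(S_n)$, and noting that on the event $\{\max_{i<j}|a_{ij}|>\delta b_n\}$ the maximum of $|L_n|$ coincides with $\max_{i<j}|a_{ij}|$, I obtain $|M_r(A_n)-2^{1-2/r}\max_{i<j}|a_{ij}||\leq C\delta^{(2-\alpha)/2}b_n+o_p(b_n)$; on the complementary event both quantities are of order $(\delta+\delta^{(2-\alpha)/2})b_n$. Sending $\delta\downarrow 0$ after $n\to\infty$ shows $2^{2/r-1}M_r(A_n)/b_n - \max_{i<j}|a_{ij}|/b_n\to 0$ in probability, after which the classical extreme value limit $\max_{i<j}|a_{ij}|/b_n \xrightarrow{(d)} F_\alpha$ with $\P(F_\alpha\leq x)=\exp(-x^{-\alpha})$ completes the proof. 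The main obstacle I anticipate is extracting the sharp factor $2^{1-2/r}$ in $M_r(L_n)$: using only the na\"ive bound $M_r\leq M_2 = \lambda_{\max}$ would miss this factor for $r<2$, and it is the pointwise Young-type inequality together with the disjoint-pair structure that produces the correct $\ell_r$-specific scaling.
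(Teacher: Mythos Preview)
Your proof is correct and takes a genuinely different route from the paper's.

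\textbf{Comparison.} The paper's upper bound works pointwise on the optimizer $\bm v$: using the first-order condition $M_r(A)|v_i|^{r-1}=\bigl|\sum_j a_{ij}v_j\bigr|$ together with the row-structure Lemma (each row has at most one entry exceeding $b_n^{3/4+\delta}$, and the remaining row sum is $o(b_n)$), it deduces $v_*\leq 2^{-1/r}(1+o(1))$ and then $M_r(A)\leq v_*^{2-r}a_*(1+o(1))$. The case $r=1$ is handled separately by taking $\inf_{r\in(1,2]}M_r$. Your argument instead decomposes $A=L_n+S_n$ at a threshold $\delta b_n$, controls $S_n$ globally via the Frobenius bound $|\bm x^\top S_n\bm x|\leq\|S_n\|_F$ (valid precisely because $r\leq2$ forces $\|\bm x\|_2\leq1$), and then computes $M_r(L_n)$ \emph{exactly} on the disjoint-pair event using the AM--GM inequality $|xy|\leq 2^{-2/r}(|x|^r+|y|^r)^{2/r}$ and convexity of $s\mapsto s^{2/r}$ on the simplex.

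\textbf{What each buys.} Your approach is more self-contained: it does not need the Lagrange characterization or the row-sparsity lemma, it treats all $r\in[1,2]$ uniformly (no separate $r=1$ limit), and the appearance of the sharp constant $2^{1-2/r}$ is transparent---it is exactly the equality case of AM--GM attained on a single pair. The paper's approach, by contrast, reuses the row-sum machinery (Lemma on typical behaviour) that is needed anyway for the $r\to p$ norm results, so within the paper it costs nothing extra; it also illustrates the optimizer-analysis technique that recurs for $r>2$. One minor remark: your reduction ``$(t^r-1)^2\geq0$'' should read $(t^{r/2}-1)^2\geq0$, but the inequality itself is correct.
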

The proof of \Cref{thm:gro1}, which  is given in \Cref{ss:gro1}, relies on results obtained in \Cref{s:rtop1}. 
 It proceeds by analyzing \ref{opt1} to show that when $1\leq r\leq 2$, $M_r(A_n)$ is completely determined by the largest (in absolute value) entry of $A_n$, and then using  properties of heavy-tailed matrices (Lemma \ref{lemma:typicalBehaviour}) to conclude that the limiting distribution of the properly normalized $M_r(A_n)$ resembles that of the limiting distribution of the largest entry, which is well known to be a Fr\'echet distribution with index $\alpha$  (Lemma \ref{Frechet}).   
 
 The analysis in the second regime where  $r > 2$ is more involved. 
 
\begin{theorem}\label{thm:gro2}
Fix $2<r\leq\infty$ and let $\alpha_*\equiv\alpha_*(r)\coloneqq\frac{r}{r-1}$. Suppose the sequence $A_n=\left(a_{ij}\right)_{i,j\in[n]}$, $n \in \N$, satisfies \Cref{assume} with an index $\alpha$  that 
satisfies 
\begin{align}\label{gro2-cond1}
\begin{split}
    0<\alpha<\alpha_*(r).
\end{split}
\end{align}
Then with $b_n$ as defined in \eqref{d:b}, we have
\begin{align}
\begin{split}
    b_n^{-1}M_r(A_n)\xrightarrow{(d)} \left(Y_{\frac{\alpha(r-2)}{r}}\right)^{\frac{r-2}{r}},
\end{split}
\end{align}
where $Y_{\frac{\alpha(r-2)}{r}}$ is an $\frac{\alpha(r-2)}{r}$-stable random variable.
\end{theorem}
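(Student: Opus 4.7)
The plan is to show that $M_r(A_n)$ is asymptotically determined by a sparse sub-matrix consisting of the largest entries of $A_n$, solve the $\ell_r$-Grothendieck problem on this sparse object in closed form, and then invoke heavy-tail / stable-limit theory. In contrast to the regime $r\le 2$ of \Cref{thm:gro1}, where a single entry dominates, for $r>2$ the optimal $\bm x$ spreads its $\ell_r$-mass across several of the large entries, which is what eventually produces a sum of heavy-tailed random variables and hence a stable (rather than Fr\'echet) limit.

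Fix a threshold exponent $\beta$ in a range $(\beta_-,\beta_+)\subset(1/2,1)$ to be determined, and decompose $A_n = A_n^L + A_n^S$, where $(A_n^L)_{ij} = a_{ij}\mathbf 1_{\{|a_{ij}|>b_n^\beta\}}$. The expected number of above-threshold entries in the upper triangle is $\Theta(n^2 \bar F(b_n^\beta)) = \Theta(n^{2-2\beta})$, which is $o(n)$ for $\beta>1/2$; a birthday-type estimate using \Cref{lemma:typicalBehaviour} then shows that w.h.p. the supports of the nonzero entries of $A_n^L$ form a graph-theoretic matching, i.e.\ disjoint pairs $\{(i_k,j_k)\}_k$ with weights $a_k \coloneqq a_{i_k j_k}$. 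For such a matching-structured matrix the optimization \ref{opt1} decouples across edges: writing $s_k \coloneqq |x_{i_k}|^r + |x_{j_k}|^r$ with $\sum_k s_k\le 1$, a one-dimensional optimization on each edge gives contribution $2^{1-2/r}|a_k|s_k^{2/r}$ (attained at $|x_{i_k}|=|x_{j_k}| = (s_k/2)^{1/r}$ with matched signs), and an application of H\"older's inequality with conjugate exponents $r/2$ and $r/(r-2)$ to $\sum_k |a_k|s_k^{2/r}$ yields the closed form
\begin{equation*}
M_r(A_n^L) \;=\; 2^{1-2/r}\Bigl(\textstyle\sum_k |a_k|^{r/(r-2)}\Bigr)^{(r-2)/r}.
\end{equation*}

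Now the hypothesis $\alpha<\alpha_*(r) = r/(r-1)$ is precisely equivalent to $\alpha(r-2)/r < (r-2)/(r-1) < 1$, so the i.i.d.\ positive variables $\{|a_{ij}|^{r/(r-2)}\}_{i<j}$ are heavy-tailed with stable index strictly below $1$. Classical stable-limit theory then gives
\begin{equation*}
b_n^{-r/(r-2)}\!\sum_{i<j,\,|a_{ij}|>b_n^\beta}\!|a_{ij}|^{r/(r-2)} \;\xrightarrow{(d)}\; c_\beta\, Y,
\end{equation*}
where $Y$ is a positive $\alpha(r-2)/r$-stable random variable; because the stable index is below $1$ the sum is dominated by its largest summands, so the truncation only affects the multiplicative constant $c_\beta$. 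Raising to the $(r-2)/r$-power and using $b_n^{-r/(r-2)\cdot(r-2)/r} = b_n^{-1}$ gives the asserted limit for $b_n^{-1}M_r(A_n^L)$, with the factor $2^{1-2/r} c_\beta^{(r-2)/r}$ absorbed into the definition of $Y_{\alpha(r-2)/r}$.

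The main obstacle is the negligibility of the small-entries remainder: one must show $M_r(A_n^S) = o_P(b_n)$, so that $|M_r(A_n) - M_r(A_n^L)| \le 2\|A_n^S\|_{r\to r^*}$ (from the duality bound \eqref{ineq:dual}) is lower order. The plan is to combine the elementary norm embeddings $\|y\|_{r^*}\le n^{1/r^*-1/2}\|y\|_2$ (valid since $r^*\le 2$) and $\|x\|_2\le n^{1/2-1/r}\|x\|_r$ (valid since $r\ge 2$) to obtain $\|A_n^S\|_{r\to r^*} \le n^{(r-2)/r}\|A_n^S\|_{2\to 2}$, and then bound the spectral norm of $A_n^S$, a Wigner-type symmetric matrix with entries in $[-b_n^\beta,b_n^\beta]$ of variance $\asymp b_n^{\beta(2-\alpha)}L(b_n^\beta)$, by a Bai--Yin/Latala-type inequality, giving $\|A_n^S\|_{2\to 2} = O_\delta(\sqrt n\, b_n^{\beta(1-\alpha/2)})$. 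A short calculation shows that the resulting bound $n^{3/2-2/r} b_n^{\beta(1-\alpha/2)}$ is $o(b_n)$ for some admissible $\beta$ exactly when $\alpha < \alpha_*(r)$, which is the source of the hypothesis \eqref{gro2-cond1}. The delicate point is that the truncated entries need not be centered, so the spectral bound must also accommodate a rank-one ``mean'' contribution of order $n\cdot\E[a_{ij}\mathbf 1_{\{|a_{ij}|\le b_n^\beta\}}]$, which must be controlled in the same range of $\beta$; it is precisely the breakdown of this control at the boundary $\alpha=\alpha_*(r)$ that foreshadows the distinct centered/non-centered behavior captured by \Cref{thm:gro2'} and \Cref{thm:gro3}.
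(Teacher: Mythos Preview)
Your treatment of the large part --- the matching structure, the closed form $M_r(A_n^L)=2^{1-2/r}\bigl(\sum_k|a_k|^{r/(r-2)}\bigr)^{(r-2)/r}$, and the stable limit via the generalized CLT --- is correct and coincides with what the paper does for $A^{\mathsf{la}}$.

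The gap is in trying to make a single threshold $\beta$ do two incompatible jobs. For $A_n^L$ to be a matching w.h.p.\ the birthday estimate actually needs $(n^{2-2\beta})^2/n=o(1)$, hence $\beta>3/4$, not merely $\beta>1/2$. On the other side, once $\beta>1/2$ the spectral bound is dominated by the max-entry term, so Bandeira--van~Handel / Lata\l a type inequalities give $\|A_n^S\|_{2\to2}=O_\delta(b_n^\beta)$ rather than the $\sqrt n\,b_n^{\beta(1-\alpha/2)}$ you wrote; then $n^{(r-2)/r}b_n^\beta=o(b_n)$ forces $\beta<1-\tfrac{\alpha(r-2)}{2r}$. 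These two constraints on $\beta$ are incompatible as soon as $\alpha\ge \tfrac{r}{2(r-2)}$, which for large $r$ lies well below $\alpha_*(r)=\tfrac{r}{r-1}$ (e.g.\ $r=\infty$ leaves all of $\alpha\in[\tfrac12,1)$ uncovered). So the claim that the bound is $o(b_n)$ ``exactly when $\alpha<\alpha_*(r)$'' is false, and the two-way split does not close in general.

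The paper resolves this with a \emph{three}-way split $A=A^{\mathsf{sm}}+A^{\mathsf{int}}+A^{\mathsf{la}}$ at thresholds $n^{(1+\eta)/\alpha}$ and $n^{(2-\zeta)/\alpha}$, which decouples the two constraints. Crucially, under hypothesis \eqref{gro2-cond1} the small part is bounded \emph{not} via the spectral norm but via the deterministic row-sum inequality $\|V\|_{r\to r^*}\le\bigl(\sum_i(\sum_j|v_{ij}|)^{r/(r-2)}\bigr)^{(r-2)/r}$ (\Cref{lem:new-upper-bound}) combined with a Bernstein bound on each row sum; the intermediate piece is controlled by the same inequality plus $O(1)$ row-wise sparsity. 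Your spectral-embedding route is precisely what the paper deploys in the \emph{centered} regime of \Cref{thm:gro2'}, where the rank-one mean contribution you correctly flag disappears; under the present hypothesis no centering is assumed, and it is the row-sum argument that makes the proof go through.
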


The proof of \Cref{thm:gro2} is deferred to \Cref{ss:gro2}, and relies on refined estimates  obtained in Section \ref{s:rtop2}.   
 To deal with the heavy-tailed nature of entries, the main strategy is to first decompose $A$ based on the magnitude of the entries, in the following manner
\begin{align}
\begin{split}
    A=  A^{\mathsf{la}}+A^{\mathsf{int}}+ A^{\mathsf{sm}},
\end{split}
\end{align}
where 
$ A^{\mathsf{la}}$ contains the largest (in absolute value) entries and is overall sparse, $ A^{\mathsf{int}}$ consists of entries of intermediate magnitude and is sparse on each column and row, and the remaining matrix $ A^{\mathsf{sm}}$ that contains entries of small magnitude is dense;    see \eqref{a-split} for precise definitions.   The second ingredient is a general upper bound on the $r \rightarrow p$ norm of (and  the $M_r$ value) of any matrix in  terms of a nonlinear function of its row sums (\Cref{lem:new-upper-bound}).   On the one hand, due to the dense nature of $ A^{\mathsf{sm}}$ one can  apply concentration estimates that exploit  cancellations in the row sums, and invoke the   
upper bound to  show that 
(a suitable scaled version of ) $M_r( A^{\mathsf{sm}})$ is asymptotically negligible (\Cref{lem:omit-small}).  
On the other hand, to show that (the scaled version of) $M_r( A^{\mathsf{int}})$ is  asymptotically negligible, one instead leverages the row-wise sparseness  along with  the heavy-tailed nature of the entries  (see \Cref{lem:omit-middle}).    Finally, we show that the `extreme' sparsity of $ A^{\mathsf{la}}$ simplifies its structure, making it amenable to  explicit optimization, which in view of the stable central limit theorem yields  the result.

The next two results show that the upper bound $\alpha_*(r) = \frac{r}{r-1}$ in \Cref{thm:gro2} is in a sense tight.  
When the tails become less heavy, that is, when $\alpha$ exceeds the value $\alpha_*(r)$, the 
asymptotic behavior becomes even more subtle and both the scaling and asymptotics depend  on whether or not the entries are centered. 
As shown in  \Cref{thm:gro2'} below, in this regime the asymptotics are   the same as that  in \Cref{thm:gro2} when the entries are centered, and the proof  is also similar and given in  \Cref{ss:gro2}. 
 
\begin{theorem}\label{thm:gro2'}
Fix $2<r\leq \infty$, $\alpha_*\equiv \alpha_*(r)\coloneqq\frac{r}{r-1}$ and $\bar\alpha_{*,1}\equiv\bar\alpha_{*,1}(r)\coloneqq\min\{2,\frac{r}{r-2}\}$. Suppose the sequence $A_n=\left(a_{ij}\right)_{i,j\in[n]}$, $n \in \N$, satisfies \Cref{assume} with index $\alpha$, such that
\begin{align}\label{gro2-cond2}
\begin{split}
    \text{$(a_{ij})_{i,j\in[n]}$ are centered and}\quad\alpha_*(r)\leq \alpha<\bar\alpha_{*,1}(r).
\end{split}
\end{align}
Then with $b_n$ as defined in \eqref{d:b}, we have
\begin{align}
\begin{split}
    b_n^{-1}M_r(A_n)\xrightarrow{(d)} \left(Y_{\frac{\alpha(r-2)}{r}}\right)^{\frac{r-2}{r}},
\end{split}
\end{align}
where $Y_{\frac{\alpha(r-2)}{r}}$ is an $\frac{\alpha(r-2)}{r}$-stable random variable.
\end{theorem}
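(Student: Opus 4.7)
\textbf{Proof proposal for \Cref{thm:gro2'}.} The plan is to follow the three-way decomposition $A_n = A_n^{\mathsf{la}} + A_n^{\mathsf{int}} + A_n^{\mathsf{sm}}$ used in the proof of \Cref{thm:gro2} (cf.\ \eqref{a-split}) and to show that only the estimate for the small-entry piece $A_n^{\mathsf{sm}}$ needs to be modified; the other two pieces can be handled verbatim. Indeed, the analysis of $M_r(A_n^{\mathsf{la}})$ uses only the extreme sparsity of the large-entry matrix together with the $\alpha$-stable CLT for the surviving entries, both of which are insensitive to where $\alpha$ lies inside $(0,2)$; and the proof that $b_n^{-1}M_r(A_n^{\mathsf{int}})\to 0$ (the analogue of \Cref{lem:omit-middle}) relies only on the row-wise sparsity of $A_n^{\mathsf{int}}$ produced by heavy tails, which is unaffected by centering. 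The entire burden therefore falls on establishing $b_n^{-1}M_r(A_n^{\mathsf{sm}})\to 0$ throughout the new range $\alpha_*(r)\leq\alpha<\bar\alpha_{*,1}(r)$.

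In the sub-critical regime $\alpha<\alpha_*(r)$ treated in \Cref{thm:gro2}, $M_r(A_n^{\mathsf{sm}})$ was controlled through the deterministic nonlinear row-sum upper bound (\Cref{lem:new-upper-bound}) combined with concentration of row sums. Once $\alpha\geq\alpha_*(r)=r/(r-1)$, the tails are lighter and the small entries more numerous, so a pointwise row-sum bound is no longer sharp enough to keep $M_r(A_n^{\mathsf{sm}})$ below $b_n$. The centering hypothesis in \eqref{gro2-cond2} is what compensates: it allows one to control the quadratic form $\bm x^\T A_n^{\mathsf{sm}}\bm x$ through a \emph{second}-moment concentration rather than a first-moment pointwise estimate.

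Concretely, writing the small-entry threshold as $b_n^\beta$ for a suitable $\beta\in(0,1)$, Karamata's theorem gives the truncated second moment $\sigma_n^2 \coloneqq \E\!\left[a_{ij}^2 \mathbbm{1}_{\{|a_{ij}|\leq b_n^\beta\}}\right] \asymp b_n^{\beta(2-\alpha)}$ up to slowly varying factors, and $A_n^{\mathsf{sm}}$ is a centered Wigner-type matrix with uniformly bounded entries. A matrix-Bernstein (or Bai--Yin-type) estimate then yields $\|A_n^{\mathsf{sm}}\|_{2\to 2}=O_\delta(\sigma_n\sqrt{n})$ \whp, which I couple with the H\"older inequality $M_r(A)\leq n^{1-2/r}\|A\|_{2\to 2}$ (valid for any symmetric $A$ and $r\geq 2$, using $\|\bm x\|_2\leq n^{1/2-1/r}\|\bm x\|_r$) to obtain
\begin{equation*}
M_r(A_n^{\mathsf{sm}}) = O_\delta\!\bigl(n^{3/2-2/r}\sigma_n\bigr) \quad \text{\whp}.
\end{equation*}
For $r\in(2,4]$, where $\bar\alpha_{*,1}(r)=2$, this already gives $o(b_n)$ throughout $\alpha<2$ after choosing $\beta$ small. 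For $r>4$, where $\bar\alpha_{*,1}(r)=r/(r-2)$, the spectral bound is expected to be lossy, and the main obstacle will be to sharpen it. I would address this by a Hanson--Wright-type concentration of $\bm x^\T A_n^{\mathsf{sm}}\bm x$ at fixed $\bm x$ with $\|\bm x\|_r\leq 1$: Bernstein's inequality gives sub-Gaussian tails with variance scale $\sigma_n^2\|\bm x\|_2^4$ and sub-exponential scale $b_n^\beta\|\bm x\|_\infty^2$, and a union bound over an $\varepsilon$-net of the $\ell_r$-ball (whose covering number is controlled by volumetric bounds) recovers the precise threshold $\alpha<r/(r-2)$ after optimizing $\beta$.

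Once $b_n^{-1}M_r(A_n^{\mathsf{sm}})\to 0$ \whp\ is established, the argument closes exactly as in \Cref{thm:gro2}: the subadditivity $M_r(A_n)\leq M_r(A_n^{\mathsf{la}})+M_r(A_n^{\mathsf{int}})+M_r(A_n^{\mathsf{sm}})$ gives the upper bound, and a matching lower bound follows by evaluating $\bm x^\T A_n\bm x$ at an optimizer of $M_r(A_n^{\mathsf{la}})$ and absorbing the cross-terms involving $A_n^{\mathsf{int}}+A_n^{\mathsf{sm}}$ into the same $o(b_n)$ error. The stable CLT then identifies the limit as $(Y_{\alpha(r-2)/r})^{(r-2)/r}$, completing the proof.
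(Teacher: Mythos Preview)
Your overall strategy—bound $|M_r(A)-M_r(A^{\mathsf{la}})|\le\|A^{\mathsf{sm}}\|_{r\to r^*}+\|A^{\mathsf{int}}\|_{r\to r^*}$, feed $p=r^*$ into the operator-norm estimates of \Cref{s:rtop2}, and then invoke \Cref{lem:fluct-large}—is exactly the paper's; the paper proves \Cref{thm:gro2} and \Cref{thm:gro2'} simultaneously in a few lines from those propositions.

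Where you go astray is the analysis of $A^{\mathsf{sm}}$ for $r>4$. Your claim that the spectral route is ``lossy'' there is incorrect: it delivers the full range $\alpha<\bar\alpha_{*,1}(r)$ for \emph{every} $r>2$, with no case split. The paper keeps the small/intermediate cutoff at $n^{(1+\eta)/\alpha}$ with $\eta=\tfrac12(1-\alpha\gamma)>0$ (in your notation, $\beta=(1+\eta)/2>1/2$), so the Bandeira--van~Handel bound is governed by the $L^\infty$ term rather than $\sigma_n\sqrt n$, giving $\|A^{\mathsf{sm}}\|_{2\to2}=O_\delta(n^{(1+\eta)/\alpha})$ (\Cref{l:path-counting}). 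H\"older (\Cref{l:later-claim}) then yields $\|A^{\mathsf{sm}}\|_{r\to r^*}=O_\delta(n^{\gamma+(1+\eta)/\alpha})$ with $\gamma=(r-2)/r$, and this is $o(n^{2/\alpha})$ exactly when $\alpha\gamma<1$, i.e.\ $\alpha<r/(r-2)$.

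Your choice of ``$\beta$ small'' is also in tension with the rest of the decomposition: the row-sparsity argument for $A^{\mathsf{int}}$ (\Cref{l:row-sparse}) requires the lower cutoff to exceed $n^{1/\alpha}$, i.e.\ $\beta>1/2$; with $\beta<1/2$ the expected number of intermediate entries per row is $n^{1-2\beta}\to\infty$ and \Cref{lem:omit-middle} breaks down. So the $\sigma_n\sqrt n$ form of the spectral bound is not the one to use here. The Hanson--Wright plus $\varepsilon$-net fallback is therefore unnecessary—and would in any case have to beat a covering number of order $(C/\varepsilon)^n$ with only Bernstein-level tails, which you have not checked.
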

 
The reason for the need for centering in this regime can be understood as follows.
When $\alpha$ is small, that is, when \eqref{gro2-cond1} holds, the entries have ``extreme'' heavy tails, making the impact of the mean (or median for $\alpha\in(0,1]$) of the distribution, irrespective of whether or not it is zero, asymptotically negligible  when compared to the impact of the sparse collection of the largest (in absolute value) entries.  
On the other hand, 
when $\alpha\geq \alpha_*(r)$, 
the assumption of zero mean is necessary to make the influence of smaller entries negligible and still yield a sparse structure for the largest entries in the matrix.  
Indeed, as we show in \Cref{thm:gro3} below, when the entries have a strictly positive mean, one has different  asymptotics.  

\begin{theorem}\label{thm:gro3}
Fix $2<r< \infty$, $\mu>0$, $\alpha_*\equiv\alpha_*(r)\coloneqq \frac{r}{r-1}$ and $\bar\alpha_{*,2}\equiv\alpha_{*,2}(r)\coloneqq \frac{r+2}{r}$. Suppose the sequence  $A_n=(a_{ij})_{i,j\in[n]}, n \in \N,$ satisfies \Cref{assume} with $\alpha$, such that the following additional condition holds: 
\begin{align}\label{gro3-cond}
\begin{split}
    \text{$(a_{ij})_{i,j\in[n]}$ are centered and}\quad\alpha_*(r)<\alpha<\bar\alpha_{*,2}(r).
\end{split}
\end{align}
Define
\begin{align}
\begin{split}
    A_{\mu,n}\coloneqq \mu\bm 1\bm 1^\T+A_n=(\mu+a_{ij})_{i,j\in[n]}.
\end{split}
\end{align}
Then with $b_n$ as defined in \eqref{d:b}, we have
\begin{align}
\begin{split}
    b_n^{-\frac{r}{r-2}}n^{\frac{r}{r-2}-\frac{r-2}{r}}\left(M_r(A_{\mu,n})-n^{2-\frac{2}{r}}\mu\right)\xrightarrow{(d)}Y_{\frac{\alpha(r-2)}{r}},
\end{split}
\end{align}
where $Y_{\frac{\alpha(r-2)}{r}}$ is an $\frac{\alpha(r-2)}{r}$-stable random variable.
\end{theorem}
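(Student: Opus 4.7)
The plan is to exploit the fact that, under the hypothesis $\alpha>\alpha_*(r)=r/(r-1)$, the deterministic rank-one part $\mu\bm 1\bm 1^\T$ dominates the random matrix part in the maximization. Writing
\[
M_r(A_{\mu,n})=\max_{\|\bm x\|_r=1}\bigl[\mu(\bm 1^\T\bm x)^2+\bm x^\T A_n\bm x\bigr],
\]
one has $\mu(\bm 1^\T\bm x)^2\le\mu n^{2-2/r}$ by H\"older's inequality (with equality only at $\bm x_0:=n^{-1/r}\bm 1$), while by \Cref{thm:gro2'} one has $|\bm x^\T A_n\bm x|=O_{\whp}(b_n)=O_{\whp}(n^{2/\alpha})$, and the hypothesis $\alpha>\alpha_*(r)$ forces $n^{2/\alpha}\ll n^{2-2/r}$. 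Hence the maximizer $\bm x^*$ is trapped in a small neighbourhood of $+\bm x_0$ (positive sign since $\mu>0$). First I would decompose $A_n=A_n^{\mathsf{la}}+A_n^{\mathsf{int}}+A_n^{\mathsf{sm}}$ exactly as in the proof of \Cref{thm:gro2'} and show, using concentration for the dense small part (adapting \Cref{lem:omit-small}) and row-sparsity for the middle part (adapting \Cref{lem:omit-middle}), that replacing $A_n$ by $A_n^{\mathsf{la}}$ in the display above costs only $o_{\whp}(b_n^{r/(r-2)}n^{-4(r-1)/(r(r-2))})$. This reduces the analysis to $A_n^{\mathsf{la}}$, which whp has $\Theta_\delta(n^{2\delta})$ entries of magnitude $\Theta_\delta(b_n)$ supported on pairwise disjoint rows and columns.

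For each such large entry $a$ at position $(i^*,j^*)$, I parameterize the perturbation of $\bm x_0$ by $x_{i^*}=cn^{-1/r}\sigma_1$, $x_{j^*}=cn^{-1/r}\sigma_2$ with $\sigma_1\sigma_2=\sgn(a)$, and set the remaining coordinates to the common value $u=((1-2c^r/n)/(n-2))^{1/r}$ forced by $\|\bm x\|_r=1$. Taylor-expanding in $1/n$ and using that $c-1+(1-c^r)/r$ equals $-\tfrac{r-1}{2}(c-1)^2+O((c-1)^3)$ near $c=1$ and behaves like $-c^r/r$ for $c\gg 1$ (both relevant since the optimum will be interior to these regimes), one obtains
\[
\mu(\bm 1^\T\bm x)^2+2a\,x_{i^*}x_{j^*}=\mu n^{2-2/r}+2|a|c^2n^{-2/r}-\tfrac{4\mu}{r}c^rn^{1-2/r}+\text{lower order}.
\]
Optimization in $c$ gives $c^*=(|a|/(\mu n))^{1/(r-2)}$ and a contribution $\Delta_a=\tfrac{2(r-2)}{r\mu^{2/(r-2)}}|a|^{r/(r-2)}n^{-4(r-1)/(r(r-2))}$. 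Because the spike supports are disjoint whp, contributions decouple across different large entries, yielding
\[
M_r(A_{\mu,n})-\mu n^{2-2/r}=\tfrac{2(r-2)}{r\mu^{2/(r-2)}}\,n^{-4(r-1)/(r(r-2))}\sum_{i<j}|a_{ij}|^{r/(r-2)}\,(1+o_{\whp}(1)).
\]
Since $|a_{ij}|^{r/(r-2)}$ has tail index $\beta:=\alpha(r-2)/r\in(0,1)$ throughout the theorem's range, the stable CLT for a sum of $\binom{n}{2}$ nonnegative i.i.d.\ variables with this tail, normalized by $(n^2)^{1/\beta}\asymp b_n^{r/(r-2)}$ (up to slowly varying factors), produces a positive $\beta$-stable limit; absorbing the explicit prefactor into that limit yields the claimed convergence to $Y_{\alpha(r-2)/r}$.

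The main obstacle is that $c^*=\Theta_\delta(n^{(2-\alpha)/(\alpha(r-2))})$ is polynomially large in $n$, so the perturbation of $\bm x_0$ at each spike is macroscopic and standard linear perturbation theory fails. The hypothesis $\alpha>\alpha_*(r)$ is precisely what guarantees $c^*\cdot n^{-1/r}\to 0$, keeping the spike coordinates small in absolute value, but the expansion must still be carried to sufficient order to control (i) higher Taylor remainders from $\|\bm x\|_r^r=1$, (ii) the cross-term $\bm\eta^\T A_n\bm\eta$ generated by small and intermediate entries touching spike rows, and (iii) the global stable-$\alpha$ fluctuation $n^{-2/r}\bm 1^\T A_n\bm 1$, of order $n^{2/\alpha-2/r}$, which is subdominant exactly because $\alpha<2$. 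The upper cutoff $\bar\alpha_{*,2}(r)=(r+2)/r$ is the threshold at which at least one of these subdominance estimates breaks; beyond it, one or more of the above corrections starts to compete with the main sum, and both the scaling and the limit are expected to change character.
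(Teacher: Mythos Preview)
Your lower-bound construction is correct and in fact coincides with the paper's: after normalizing, your test vector with $x_{i^*}=c^*n^{-1/r}$, $c^*=(|a|/(\mu n))^{1/(r-2)}$, is exactly the paper's $\hat{\bm x}$ from \eqref{hatxy} (specialized to $p=r^*$) divided by its $\ell_r$-norm, and your formula for $\Delta_a$ matches the paper's leading term $\frac{r-2}{r}\mu^{-2/(r-2)}n^{\gamma-1/\gamma}\sum_{i,j}|a_{ij}|^{r/(r-2)}$.

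There is, however, a genuine gap in how you handle $A^{\mathsf{int}}$. You propose to strip it off by ``adapting \Cref{lem:omit-middle}'', i.e.\ by bounding $\|A^{\mathsf{int}}\|_{r\to r^*}$. But that lemma only gives $\|A^{\mathsf{int}}\|_{r\to r^*}=o(b_n)$, and in the regime $\alpha>\alpha_*(r)$ one has $b_n\gg n^{\gamma-1/\gamma}b_n^{1/\gamma}$ (equivalently, $\alpha<2/(1+\gamma)=\alpha_*$ is exactly what would make $b_n$ small enough, and you are on the wrong side). This is not a crude estimate that can be tightened: a single intermediate entry $|a^{\mathsf{int}}_{ij}|$ can be as large as $n^{(2-\zeta)/\alpha}$, so $\sup_{\|\bm x\|_r=1}|\bm x^\T A^{\mathsf{int}}\bm x|$ genuinely lives at the scale $b_n$. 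The paper explicitly flags this obstruction and does \emph{not} strip $A^{\mathsf{int}}$; it performs the sandwich argument on $\mu\bm 1\bm 1^\T+A^{\mathsf{int}}+A^{\mathsf{la}}$ jointly (\Cref{l:upper} and \Cref{l:lower}), and the intermediate entries turn out to be harmless only because their contribution to the specific row-sum functional and to the specific test vector is negligible --- not because their operator norm is.

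The second gap is the upper bound. Your perturbative parameterization produces the right value for vectors of the particular shape ``constant plus spikes at $(i^*,j^*)$'', but this is a lower bound, not an upper bound: you have not argued that the true maximizer must take this form, and your localization ``$\bm x^*$ is near $\bm x_0$'' is too coarse (it controls $\bm 1^\T\bm x^*$ but not the individual coordinates). The paper bypasses any localization of the maximizer by using the general deterministic inequality of \Cref{lem:new-upper-bound},
\[
\|V\|_{r\to r^*}\le\Bigl(\sum_i\bigl(\textstyle\sum_j|v_{ij}|\bigr)^{r/(r-2)}\Bigr)^{(r-2)/r},
\]
applied to $V=\mu\bm 1\bm 1^\T+A^{\mathsf{int}}+A^{\mathsf{la}}$, and then expands each row sum (this is where the handling of $A^{\mathsf{int}}$ happens, row by row, using \Cref{l:product0} and \Cref{l:sum-big}). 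That row-sum bound is the key tool your sketch is missing.
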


When compared with \Cref{thm:gro2} and \Cref{thm:gro2'}, $M_r(A_{\mu,n})$ requires an additional centering and a different scaling. 
The appearance of the centering can be seen from the definition of $M_r(A_{\mu,n})$: 
\begin{align}\label{intuition}
\begin{split}
    M_r(A_{\mu,n})=\max_{\bm x\in\R^n:\|\bm x\|_r\leq1}\left[\mu\left(\bm 1^\top\bm x\right)^2+\bm x^\top A_n\bm x\right].
\end{split}
\end{align}
For $\alpha\in(\alpha_*(r),\bar\alpha_{*,2}(r))$, it follows that $\max_{\|\bm x\|_r\leq 1}\bm x^\top A_n\bm x=O_\delta(b_n)$ \whp\ by \Cref{thm:gro2'} and $\max_{\|\bm x\|_r\leq 1}\mu(\bm 1^\T\bm x)^2= n^{2-\frac{2}{r}}\mu$ by H\"older's inequality. 
When $\alpha>\alpha_*(r)$ and $\mu>0$, the inequality $b_n\ll n^{2-\frac{2}{r}}$ 
 ensures that the dominant term in $M_r(A_n)$ is the first term in \eqref{intuition}, necessitating centering. 
Conversely, when $\alpha<\alpha_*(r)$ as in the setting of \Cref{thm:gro2} or $\mu=0$ as in the setting of \Cref{thm:gro2'}, the second term in \eqref{intuition}, which is responsible for all fluctuations, dominates the first term, rendering centering unnecessary. 
However, the reason for a different {\em scaling} in \Cref{thm:gro3} is more subtle and not completely apparent. 
In the light-tailed case analyzed in \cite{dhara2020r},  a scaling factor of $n^{-\frac{r-2}{r}}\sigma^{-1}$ where $\sigma$ is the variance of the matrix entries, is needed to characterize the fluctuations.  However, in the setting of \Cref{thm:gro3}, the variance is infinite and instead, the additional scaling factor takes the form $\left(b_n^{-1}n\right)^{\frac{r}{r-2}}$, reflecting the difference in behavior in the presence of heavy tails. 

The proof of  \Cref{thm:gro3},   
presented   in \Cref{ss:gro3},  is much more subtle 
than \Cref{thm:gro2} and \Cref{thm:gro2'}. 
First,  one needs to employ a different argument  to show  that  $M_r( A^{\mathsf{sm}}_n)$ is asymptotically negligible.  
Here, one relates $M_r( A^{\mathsf{sm}}_n)$ (or more generally the $r \rightarrow p$ norm of $ A^{\mathsf{sm}}_n$) to the spectral norm and leverages bounds on the spectral norm from random matrix theory.  Second, the estimate on $M_r( A^{\mathsf{int}}_n)$ used in \Cref{thm:gro2} also proves inadequate.   Instead,  the proof 
relies on  a detailed sandwich argument for $M_r(A_n -  A^{\mathsf{sm}}_n)$ (see \Cref{s:rtop3}) where  a general  {\em lower} bound is identified 
and shown to asymptotically match the above mentioned upper bound at the scale of fluctuations.   For intuition behind the ansatz for the lower bound, which is   inspired by Boyd's nonlinear power method (even though is is not directly applicable in this setting), see \Cref{ss:main-lower}.  
It should be mentioned that all the proofs are carried out in the context of general $r \to p$ norms and then adapted to study $M_r(A_n).$

It is natural to ask what happens 
when $\alpha > \bar\alpha_{*,1}(r)$. 
In this case we expect the fluctuations of $M_r(A_n)$ to be dominated by  small, dense matrix terms. Consequently, the approach of leveraging the sparsity of large terms in heavy-tailed entries, as employed in this work, becomes ineffective.  We believe that the analysis of  this case is an interesting open problem.

Finally, as a corollary of \Cref{thm:gro2}, we obtain asymptotics of the ground state energy of Levy spin glass model, which is a result of independent interest. 
Spin glasses are a type of disordered magnetic system characterized by the presence of both ferromagnetic and antiferromagnetic interactions between the spins  of the atoms in the material.
Mathematically, it is a family of  (random) Gibbs  distributions on $n$-spin configurations, parameterized by the quantity $\beta> 0$, referred to as the inverse temperature, and defined as follows: 
\begin{equation}
\begin{aligned}
    P_\beta(\bm \sigma)\propto \exp\left(- \beta H (\bm \sigma)\right), \quad\forall\,\bm\sigma\in\{+1,-1\}^n,
\end{aligned}
\end{equation}
where the so-called Hamiltonian $H: \{+1, -1\}^n \mapsto \R$ takes the form  $H(\bm\sigma)=-\sum_{i,j\in[n]}h_{ij}\sigma_i\sigma_j$ for some $n \times n$ interaction matrix 
$h = (h_{ij})$. 
The Levy spin glass model, which corresponds to the case when the interaction matrix $h$ has independent centered heavy-tailed entries, was introduced in \cite{cizeau1993mean} to understand surprising experimental results on dilute spin glasses with dipolar interactions. 
The ground states of the spin glass refer 
to the most probable configurations, in the limit as $\beta\rightarrow\infty$ (the zero temperature regime).  In turn, these states correspond to  the maximizers of the ``ground state energy''  given by  $\sum_{i,j\in[n]}h_{ij}\sigma_i\sigma_j$.  Recent work \cite{chen2023some,kim2024fluctuations}  studied  the fluctuations of Levy spin glass  for finite $\beta>0$, but 
did not consider the corresponding result for the limit $\beta\rightarrow\infty$.  This is resolved in the following   corollary, whose proof can be found in \Cref{ss:spin}.

\begin{corollary}\label{cor:ground}
Suppose the sequence $A_n=\left(a_{ij}\right)_{i,j\in[n]}$, $n \in \N$, satisfies \Cref{assume} with index $\alpha\in(0,1)$. Then with $b_n$ as defined in \eqref{d:b}, we have
\begin{align}
\begin{split}
    b_n^{-1}\sup_{\bm x\in\left\{-1,1\right\}^n}\bm x^\T A_n\bm x\xrightarrow{(d)} Y_\alpha,
\end{split}
\end{align}
where $Y_\alpha$ is an $\alpha$-stable random variable.
\end{corollary}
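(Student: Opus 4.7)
My plan is to deduce \Cref{cor:ground} directly from \Cref{thm:gro2} applied at $r=\infty$, together with a Slutsky-type argument that transfers the conclusion from the continuous $\ell_\infty$-ball $\{\bm x:\|\bm x\|_\infty\leq 1\}=[-1,1]^n$ to the vertex set $\{-1,1\}^n$. Note that $\alpha_*(\infty)=\lim_{r\to\infty}r/(r-1)=1$, so the hypothesis $\alpha\in(0,1)$ places us squarely within the scope of \Cref{thm:gro2} at $r=\infty$, and in this limit the stable index $\alpha(r-2)/r$ and the power $(r-2)/r$ specialize respectively to $\alpha$ and $1$. Consequently \Cref{thm:gro2} yields $b_n^{-1}M_\infty(A_n)\xrightarrow{(d)}Y_\alpha$, and the only remaining task is to show that $\sup_{\bm x\in\{-1,1\}^n}\bm x^\T A_n\bm x$ differs from $M_\infty(A_n)$ by a term that is $o(b_n)$ in probability.

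To compare the two maxima I will split
$\bm x^\T A_n\bm x=D(\bm x)+Q(\bm x)$, where $D(\bm x)\coloneqq\sum_{i=1}^n a_{ii}x_i^2$ collects the diagonal contribution and $Q(\bm x)\coloneqq\sum_{i\neq j}a_{ij}x_ix_j$ the off-diagonal part. The key observation is that $Q$ has zero diagonal, hence is affine in each coordinate separately; its maximum over the cube is therefore attained at a vertex: $\max_{[-1,1]^n}Q=\max_{\{-1,1\}^n}Q$. Since $D(\bm x)\equiv \Tr(A_n)$ on $\{-1,1\}^n$, this gives
\begin{equation*}
\sup_{\bm x\in\{-1,1\}^n}\bm x^\T A_n\bm x \;=\; \Tr(A_n)+\max_{\bm x\in[-1,1]^n}Q(\bm x).
\end{equation*}
Evaluating $M_\infty(A_n)$ at a vertex-maximizer of $Q$ yields the lower bound $M_\infty(A_n)\geq \Tr(A_n)+\max Q$, while the trivial pointwise bound $D(\bm x)\leq \sum_i |a_{ii}|$ on $[-1,1]^n$ gives $M_\infty(A_n)\leq \sum_i|a_{ii}|+\max Q$. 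Subtracting produces
\begin{equation*}
0 \;\leq\; M_\infty(A_n)-\sup_{\bm x\in\{-1,1\}^n}\bm x^\T A_n\bm x \;\leq\; \sum_{i=1}^n(|a_{ii}|-a_{ii}) \;\leq\; 2\sum_{i=1}^n|a_{ii}|.
\end{equation*}

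The last step is to control $\sum_{i=1}^n|a_{ii}|$ relative to $b_n$. Since each $|a_{ii}|$ is heavy-tailed with index $\alpha$, the obvious analogue of \eqref{eqn:property-slowly-varying} applied to $n$ (rather than $n(n+1)/2$) i.i.d.\ samples gives $\max_{i\in[n]}|a_{ii}|=O_\delta(n^{1/\alpha})$ \whp, and the crude bound $\sum_i|a_{ii}|\leq n\max_i|a_{ii}|$ then yields $\sum_{i=1}^n|a_{ii}|=O_\delta(n^{1+1/\alpha})$ \whp. By \eqref{eqn:order-bn}, $b_n=\Theta_\delta(n^{2/\alpha})$, so $\sum_i|a_{ii}|/b_n=O_\delta(n^{1-1/\alpha})$, which tends to zero precisely because $\alpha\in(0,1)$. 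Combined with the preceding display and Slutsky's theorem, this transfers the limit from $b_n^{-1}M_\infty(A_n)$ to $b_n^{-1}\sup_{\bm x\in\{-1,1\}^n}\bm x^\T A_n\bm x$, giving the corollary. There is no serious obstacle: the only nontrivial ingredients are the coordinate-wise affinity of $Q$ on the cube and the elementary comparison $n^{1+1/\alpha}\ll n^{2/\alpha}$, which is exactly the statement $\alpha<1$. It is worth noting that this is the same inequality that guarantees, at the heuristic level, that the dense diagonal contribution is dwarfed by the sparse off-diagonal extremes driving the limit in \Cref{thm:gro2}.
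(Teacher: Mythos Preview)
Your proof is correct and follows essentially the same approach as the paper: apply \Cref{thm:gro2} at $r=\infty$, use coordinate-wise affinity of the off-diagonal quadratic form to pass from the cube $[-1,1]^n$ to the vertex set $\{-1,1\}^n$, and absorb the diagonal contribution as an $o(b_n)$ error. The only cosmetic difference is that the paper subtracts the diagonal first (working with $\mathring A_n=A_n-\Lambda_n$) and bounds $\sum_i|a_{ii}|=O_\delta(n^{1/\alpha})$ via the stable CLT, whereas you keep the diagonal explicit and use the cruder but sufficient bound $\sum_i|a_{ii}|\leq n\max_i|a_{ii}|=O_\delta(n^{1+1/\alpha})$, which still beats $b_n=\Theta_\delta(n^{2/\alpha})$ exactly because $\alpha<1$.
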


\subsection{The \texorpdfstring{$r\rightarrow p$}{r-to-p} operator norm }\label{ss:rtop-results}
We next turn to our results on high-dimensional asymptotics of the $r\rightarrow p$ operator norm  (\ref{opt2}) for heavy-tailed random matrices.  They  parallel those for the Levy-Grothendieck $\ell_r$ problem. 

\begin{theorem}\label{thm:rtop1}
Fix $1\leq r\leq p\leq\infty$. Suppose the sequence $A_n=(a_{ij})_{i,j\in[n]},$ $n \in \N,$ satisfies \Cref{assume} with  $\alpha\in(0,2)$, and let $b_n$ be as defined in \eqref{d:b}. 
Then we have
\begin{equation}
\lim _{n \rightarrow \infty} \mathbb{P}\left(b_n^{-1}\|A_n\|_{r\rightarrow p} \leq x\right)=\exp \left(-x^{-\alpha}\right).
\end{equation}
\end{theorem}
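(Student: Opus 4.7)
The strategy is to show that $\|A_n\|_{r\rightarrow p}$ is asymptotically equal to the largest absolute-value entry $M_n := \max_{i,j\in[n]} |a_{ij}|$. Since classical extreme-value theory for i.i.d.\ heavy-tailed variables gives $b_n^{-1} M_n \xrightarrow{(d)} \Phi_\alpha$ with $\Phi_\alpha(x) = \exp(-x^{-\alpha})$, this yields the stated convergence. The lower bound is immediate: for any $(i^*, j^*) \in \argmax_{i,j\in[n]} |a_{ij}|$, the vector $\bm x = \bm e_{j^*}$ satisfies $\|\bm x\|_r = 1$ and $\|A_n \bm x\|_p \ge |(A_n \bm x)_{i^*}| = M_n$, so $\|A_n\|_{r\rightarrow p} \ge M_n$.

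For the upper bound I would decompose $A_n = A_n^L + A_n^S$ based on a threshold $\tau_n$, where $A_n^L$ keeps entries with $|a_{ij}| > \tau_n$. Choose $\tau_n$ so that the number $K_n$ of entries above $\tau_n$ is of order $n^{\epsilon}$ for some small $\epsilon \in (0, 1/2)$ (equivalently, $\tau_n \sim n^{(2-\epsilon)/\alpha}$ up to slowly varying factors). A standard balls-and-bins argument shows that, with high probability, the $K_n$ upper-triangular positions of large entries lie in pairwise disjoint rows and columns: each pair has collision probability $O(1/n)$, so a union bound yields overall failure probability $O(K_n^2/n) = o(1)$. Using the symmetry of $A_n$, this makes $A_n^L$ block-diagonal (after row/column permutation), with each off-diagonal large entry contributing a block $\bigl(\begin{smallmatrix} 0 & a_k \\ a_k & 0 \end{smallmatrix}\bigr)$ and each large diagonal entry a $1\times 1$ block. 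A direct computation shows each such block has $r\rightarrow p$ norm $|a_k|$, and for $r\le p$ the $r\rightarrow p$ norm of a block-diagonal matrix equals the maximum of its block norms (which follows from $\|\bm u\|_p \le \|\bm u\|_r$ for $\bm u \in \R^K$ when $r \le p$). Hence $\|A_n^L\|_{r\rightarrow p} = M_n$ with high probability.

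It remains to establish $\|A_n^S\|_{r\rightarrow p} = o(b_n)$. Separate the mean as $A_n^S = \mu_n J_n + \tilde A_n^S$, where $\mu_n := \E[a_{11}\mathbf{1}_{|a_{11}|\le\tau_n}]$ and $J_n$ is the all-ones matrix. The rank-one piece contributes $|\mu_n|\,\|J_n\|_{r\rightarrow p} = |\mu_n|\,n^{1/r^*+1/p}$; using $1/r^* + 1/p = 1 - 1/r + 1/p \le 1 < 2/\alpha$ together with the growth rate $|\mu_n| = O(\tau_n^{\max(0,1-\alpha)})$, a direct calculation shows this is $o(b_n)$. For the centered part, apply the interpolation $\|\tilde A_n^S\|_{r\rightarrow p} \le n^\gamma \|\tilde A_n^S\|_{2\rightarrow 2}$ with $\gamma := \max\{0,\tfrac{1}{2}-\tfrac{1}{r}\} + \max\{0,\tfrac{1}{p}-\tfrac{1}{2}\} \in [0,1/2]$ (coming from standard $\ell_r$--$\ell_2$--$\ell_p$ embeddings), combined with a Bandeira--van Handel-type bound $\|\tilde A_n^S\|_{2\rightarrow 2} = O_\delta\bigl(\sqrt n\,\tau_n^{(2-\alpha)/2} + \tau_n\sqrt{\log n}\bigr)$, valid for a symmetric random matrix with centered entries bounded by $\tau_n$ and variance of order $\tau_n^{2-\alpha}$. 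One verifies that for any $\gamma\in[0,1/2]$ and the above $\tau_n$ this product is $o(b_n)$. The main technical challenge lies in the boundary case $\gamma = 1/2$ (namely $r=p=1$ or $r=p=\infty$), where the spectral interpolation is weakest; there one can instead directly analyze $\|A_n^S\|_{1\rightarrow 1}$ as a maximum column sum (or $\|A_n^S\|_{\infty\rightarrow\infty}$ as a maximum row sum) via Bernstein-type concentration for bounded sums of truncated heavy-tailed variables. Combining these pieces yields $\|A_n\|_{r\rightarrow p} = M_n\bigl(1 + o_\P(1)\bigr)$, and the Fr\'echet limit of $b_n^{-1}M_n$ completes the proof.
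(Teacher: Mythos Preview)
Your decomposition strategy differs from the paper's, which avoids splitting $A_n$ altogether: it first proves the row-sum bound $\max_i \sum_j |a_{ij}| \le a_*(1+o(1))$ whp (each row has at most one entry exceeding $b_n^{3/4+\delta}$, and the remaining entries sum to $o(b_n)$), then feeds this into the Lagrange identity $A^\top \Psi_p(A\bm v) = \|A\|_{r\rightarrow p}^p \Psi_r(\bm v)$ for the maximizing vector $\bm v$ to obtain $\|A\|_{r\rightarrow p} \le a_*(1+o(1))$ directly, with $r=1$ and $p=\infty$ handled separately via the $1\rightarrow 1$ and $\infty\rightarrow\infty$ norms.

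Your approach, however, has a gap. The assertion that $n^\gamma \|\tilde A_n^S\|_{2\rightarrow 2} = o(b_n)$ for all $\gamma \in [0,1/2]$ does not hold as stated: the Bandeira--van Handel term $\tau_n\sqrt{\log n}$ yields $n^\gamma \tau_n\sqrt{\log n}/b_n \sim n^{\gamma - \epsilon/\alpha}\sqrt{\log n}$, which tends to zero only if $\epsilon > \alpha\gamma$. But your disjoint-row collision argument forces $\epsilon < 1/2$, so you actually need $\gamma < 1/(2\alpha)$. For $\alpha > 1$ this threshold is strictly below $1/2$: for instance with $r = p = 10/9$ (so $\gamma = 2/5$) and $\alpha = 3/2$, you would need $\epsilon > 3/5$ and $\epsilon < 1/2$ simultaneously. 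Your separate treatment of the exact endpoint $\gamma = 1/2$ does not cover this intermediate range $1/(2\alpha) < \gamma < 1/2$. A possible repair is to interpolate via Riesz--Thorin between $\|\tilde A_n^S\|_{2\rightarrow 2}$ and $\|\tilde A_n^S\|_{1\rightarrow 1}$ (or $\|\tilde A_n^S\|_{\infty\rightarrow\infty}$), since you already control the latter by Bernstein; alternatively the paper's row-sum argument bypasses the spectral norm entirely and works uniformly in $r\le p$.
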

\Cref{thm:rtop1}, whose proof is postponed to \Cref{s:rtop1}, is the operator norm  analogue of \Cref{thm:gro1}.  Note that the scaling in \Cref{thm:rtop1} is missing the additional factor $2^{\frac{2}{r}-1}$ that arises in \Cref{thm:gro1}.   This is because 
when $r \in [1,2]$ or $1 \leq r\leq p \leq \infty$, $M_r(A_n)$ and $\|A_n\|_{r\rightarrow p}$, respectively, are fully determined by the largest entries of $A_n$ (recall that there are two largest entries due to symmetry). 
Now, the optimal vectors $\bm x^*$ and $\bm y^*$  in the dual formulation \eqref{eq:rtop-dual} of \ref{opt2}, can be chosen separately, with the mass of each of these vectors  concentrated on a single coordinate corresponding to the row and column of $A_n$, respectively, that contains the largest entry. 
 However,  since this entry is  unlikely to reside on the diagonal (see \Cref{lemma:typicalBehaviour}), these  coordinates are distinct, say $(i,j)$ with $i\neq j$, and it turns out that the mass of the $\ell_r$-normalized maximizing vector $\bm x^*$ of \ref{opt1} needs to be distributed into multiple pairs $i,j$, each with mass $2^{-1/r}$. 
 

\Cref{thm:rtop2} and \Cref{thm:rtop2'} below, which parallel \Cref{thm:gro2} and \Cref{thm:gro2'} respectively, are proved at the end of \Cref{s:large-fluct}. 
\begin{theorem}\label{thm:rtop2}
Fix $1\leq p<r\leq \infty$, denote $\gamma\coloneqq \frac{1}{p}-\frac{1}{r}$ and define $\alpha_*\equiv\alpha_*(\gamma)\coloneqq \frac{2}{1+\gamma}$. Suppose the sequence $A_n=(a_{ij})_{i,j\in[n]}, n \in \N,$ satisfies \Cref{assume} with  $\alpha$ lying in the interval 
\begin{align}\label{rtop2-cond1}
\begin{split}
    0<\alpha<\alpha_*(\gamma).
\end{split}
\end{align}
Then with $b_n$ defined in \eqref{d:b}, we have
\begin{equation}\label{eqn:rBigp}
b_n^{-1}\|A_n\|_{r\rightarrow p}\xrightarrow{(d)} \left(Y_{\alpha\gamma}\right)^{\gamma},
\end{equation}
where $Y_{\alpha\gamma}$ is an $\alpha\gamma$-stable random variable.
\end{theorem}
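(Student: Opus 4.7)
The plan is to mirror the decomposition strategy used for \Cref{thm:gro2} and reduce the problem to analyzing only the very largest (in absolute value) entries of $A_n$. Specifically, I would split the upper triangle of $A_n$ into three parts according to two magnitude thresholds, writing $A_n = A_n^{\mathsf{la}} + A_n^{\mathsf{int}} + A_n^{\mathsf{sm}}$, where $A_n^{\mathsf{la}}$ collects entries of absolute value exceeding $b_n^{\beta_2}$, $A_n^{\mathsf{int}}$ collects those in $(b_n^{\beta_1}, b_n^{\beta_2}]$, and $A_n^{\mathsf{sm}}$ collects the remainder, for $0 < \beta_1 < \beta_2 < 1$ to be chosen at the end of the argument. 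The precise restriction $\alpha < 2/(1+\gamma)$ is exactly what makes a valid choice of these exponents exist.

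For both $A_n^{\mathsf{sm}}$ and $A_n^{\mathsf{int}}$ I would rely on the elementary row-sum upper bound $\|B\|_{r\rightarrow p} \leq \left(\sum_i \|b_{i\cdot}\|_{r^*}^p\right)^{1/p}$, which follows by applying H\"older's inequality to each coordinate of $B\bm x$. For $A_n^{\mathsf{int}}$, standard heavy-tail moment estimates show that each row $\ell_{r^*}$-norm of $A_n^{\mathsf{int}}$ is $O_\delta((n^{1-2\beta_1})^{1/r^*} b_n^{\beta_1})$ whp, yielding $\|A_n^{\mathsf{int}}\|_{r\rightarrow p} = O_\delta(n^{1/p + (1-2\beta_1)/r^*} b_n^{\beta_1})$ whp; a direct calculation shows this is $o(b_n)$ exactly when $1/p + 1/r^* < 2/\alpha$, which rearranges to $\alpha < \alpha_*(\gamma)$. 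For the dense matrix $A_n^{\mathsf{sm}}$, I would instead control $\|A_n^{\mathsf{sm}}\|_{r\rightarrow p}$ by interpolating with its spectral norm and invoking standard concentration bounds for random matrices with bounded entries to conclude $\|A_n^{\mathsf{sm}}\|_{r\rightarrow p} = o(b_n)$ whp, provided $\beta_1$ is chosen sufficiently small.

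For $A_n^{\mathsf{la}}$, the key structural observation is that, whp, the large entries are pairwise disjoint in rows and columns and avoid the diagonal; this follows from a union bound once $\beta_2$ is chosen close enough to $1$ that $n^{4-4\beta_2} \to 0$. Conditional on this event, $A_n^{\mathsf{la}}$ is a direct sum of rank-two symmetric blocks of the form $a_{ij}(\bm e_i \bm e_j^\top + \bm e_j \bm e_i^\top)$, and for any $\bm x$ with $\|\bm x\|_r \leq 1$ we have $\|A_n^{\mathsf{la}} \bm x\|_p^p = \sum_k |a_{i_k j_k}|^p\left(|x_{i_k}|^p + |x_{j_k}|^p\right)$ with the pairs $\{i_k, j_k\}$ disjoint. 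A two-step Lagrange optimization (first distributing the mass $w_k \coloneqq |x_{i_k}|^r + |x_{j_k}|^r$ optimally between $x_{i_k}$ and $x_{j_k}$ to obtain $|x_{i_k}|^p + |x_{j_k}|^p = 2^{1-p/r} w_k^{p/r}$, then optimizing over $\{w_k\}$ subject to $\sum_k w_k \leq 1$) yields the closed form
\begin{equation*}
\|A_n^{\mathsf{la}}\|_{r\rightarrow p} = 2^{\gamma}\left(\sum_k |a_{i_k j_k}|^{1/\gamma}\right)^{\gamma},
\end{equation*}
after identifying $1/\gamma = pr/(r-p)$. Since $|a_{ij}|^{1/\gamma}$ is heavy-tailed with index $\alpha\gamma \in (0,2)$, the stable central limit theorem applied to the truncated sum over positions of large entries gives $b_n^{-1/\gamma} \sum_k |a_{i_k j_k}|^{1/\gamma} \xrightarrow{(d)} \tilde Y_{\alpha\gamma}$ for some $\alpha\gamma$-stable $\tilde Y$, and taking $\gamma$-th powers yields the claim after absorbing the constant $2$ into the scale of the stable limit.

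The main obstacle is the intermediate-entry estimate: the threshold $\alpha_*(\gamma) = 2/(1+\gamma)$ is sharp for the row-sum method, so verifying the relevant truncated moment estimates and tracking the slowly varying corrections from \eqref{eqn:order-bn} must be done carefully, and the choice of $\beta_1,\beta_2$ must simultaneously satisfy several competing constraints ($\beta_1$ small for the intermediate bound, $\beta_2$ close to $1$ for disjointness of the large entries, yet $\beta_2 < 1$ strictly so that the stable CLT applies with a diverging number of terms). A secondary subtlety is the Lagrange optimization for $A_n^{\mathsf{la}}$ in the boundary cases $p=1$ or $r=\infty$, where the $\ell_r$-ball degenerates and the explicit closed form requires a direct check or limiting argument.
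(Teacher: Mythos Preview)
Your overall three-part decomposition and the analysis of $A_n^{\mathsf{la}}$ match the paper's proof essentially exactly (the paper carries out the same Lagrange/H\"older optimization and arrives at $\|A_n^{\mathsf{la}}\|_{r\to p}=(2\sum_k|a_{i_kj_k}|^{1/\gamma})^{\gamma}$, then applies the stable CLT). The spectral-norm route for $A_n^{\mathsf{sm}}$ also works here after subtracting the truncated mean, though the paper instead uses its row-sum bound (Lemma~4.3) directly in this regime.

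The genuine gap is in the intermediate estimate. First, your stated row bound $\|a^{\mathsf{int}}_{i\cdot}\|_{r^*}=O_\delta((n^{1-2\beta_1})^{1/r^*}b_n^{\beta_1})$ uses the \emph{lower} threshold $b_n^{\beta_1}$ for the entry size, which cannot be right since intermediate entries range up to $b_n^{\beta_2}$. More importantly, even with a corrected estimate, the inequality $\|B\|_{r\to p}\le(\sum_i\|b_{i\cdot}\|_{r^*}^p)^{1/p}$ is too crude to yield $o(b_n)$ across the whole range $\alpha<\alpha_*(\gamma)$. Concretely, take $p=1$, $r=2$ (so $\gamma=\tfrac12$, $\alpha_*=\tfrac43$, $r^*=2$) and $\alpha=1.2$. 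With $\beta_1$ small and $\beta_2>3/4$, the largest entry of a typical row has order $n^{1/\alpha}=b_n^{1/2}$ and lies in the intermediate band, so $\|a^{\mathsf{int}}_{i\cdot}\|_2\gtrsim n^{1/\alpha}$ for a positive fraction of rows. Your bound then gives $\sum_i\|a^{\mathsf{int}}_{i\cdot}\|_2\gtrsim n^{1+1/\alpha}\approx n^{1.83}$, whereas $b_n\approx n^{1.67}$; the estimate fails. The paper avoids this by using the sharper deterministic inequality
\[
\|V\|_{r\to p}\le\Big(\sum_i\big(\textstyle\sum_j|v_{ij}|\big)^{1/\gamma}\Big)^{\gamma},
\]
which, combined with row-sparsity of $A^{\mathsf{int}}$, collapses to $(\sum_{i,j}|a^{\mathsf{int}}_{ij}|^{1/\gamma})^{\gamma}$; the exponent $1/\gamma$ (rather than $p$ or $r^*$) is exactly what makes the truncated heavy-tail sum $o(b_n^{1/\gamma})$ for every $\alpha<\alpha_*(\gamma)$. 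Replacing your H\"older-per-coordinate bound with this one repairs the argument. (A minor additional slip: the disjointness union bound for $A_n^{\mathsf{la}}$ gives $n^{3-4\beta_2}\to 0$, i.e.\ $\beta_2>3/4$, not $n^{4-4\beta_2}\to 0$.)
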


\begin{theorem}\label{thm:rtop2'}
Fix $1\leq p<r\leq \infty$, denote $\gamma\coloneqq \frac{1}{p}-\frac{1}{r}$, define $\alpha_*\equiv\alpha_*(\gamma)\coloneqq \frac{2}{1+\gamma}$ and $\bar\alpha_{*,1}\equiv\bar\alpha_{*,1}(\gamma)\coloneqq\min\{2,\frac{1}{\gamma}\}$. Suppose the sequence $A_n=\left(a_{ij}\right)_{i,j\in[n]}$, $n \in \N$, satisfies \Cref{assume} with $\alpha$, such that
\begin{align}\label{rtop2-cond2}
\begin{split}
    (a_{ij})_{i,j\in[n]}\text{ are centered and}\quad \alpha_*(\gamma)\leq \alpha<\bar\alpha_{*,1}(\gamma).
\end{split}
\end{align}
Then with $b_n$ defined in \eqref{d:b}, we have
\begin{equation}\label{eqn:rBigp}
b_n^{-1}\|A_n\|_{r\rightarrow p}\xrightarrow{(d)} \left(Y_{\alpha\gamma}\right)^{\gamma},
\end{equation}
where $Y_{\alpha\gamma}$ is an $\alpha\gamma$-stable random variable.
\end{theorem}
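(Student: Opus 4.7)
The plan is to carry out the same decomposition as in \Cref{thm:rtop2}: write
\[
A_n = A^{\mathsf{la}}_n + A^{\mathsf{int}}_n + A^{\mathsf{sm}}_n,
\]
by splitting entries according to magnitude thresholds $b_n^{\beta_1}$ and $b_n^\beta$, for exponents $0<\beta<\beta_1<1$ to be chosen below, and show that only $A^{\mathsf{la}}_n$ drives the limit. By the triangle inequality for $\|\cdot\|_{r\to p}$ and Slutsky's theorem, it suffices to establish
\begin{equation*}
b_n^{-1}\|A^{\mathsf{la}}_n\|_{r\to p}\xrightarrow{(d)}\left(Y_{\alpha\gamma}\right)^\gamma,\qquad b_n^{-1}\|A^{\mathsf{int}}_n\|_{r\to p}\xrightarrow{p}0,\qquad b_n^{-1}\|A^{\mathsf{sm}}_n\|_{r\to p}\xrightarrow{p}0.
\end{equation*}

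First I would verify that the first two convergences transfer from the proof of \Cref{thm:rtop2} without modification. The distributional limit for $A^{\mathsf{la}}_n$ is derived from the point-process description of the top order statistics of $(|a_{ij}|)$ combined with the stable CLT, both of which rely only on $\alpha\in(0,2)$; the upper restriction $\alpha<\alpha_*(\gamma)$ was used there solely to handle the small-entry part. The negligibility of $A^{\mathsf{int}}_n$ is driven entirely by the row-sparsity of its nonzero entries combined with the upper bound from \Cref{lem:new-upper-bound}, and yields $\|A^{\mathsf{int}}_n\|_{r\to p}=O_\delta(b_n^{\beta_1})=o(b_n)$ as soon as $\beta_1<1$, independently of $\alpha$.

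The main obstacle is the analysis of $A^{\mathsf{sm}}_n$, where the crude deterministic bound available in \Cref{thm:rtop2} (replacing each entry by its absolute value) becomes too weak when $\alpha\geq\alpha_*(\gamma)$, forcing us to exploit the centering hypothesis. I would decompose each truncated entry as
\[
a_{ij}\bbm{1}_{\{|a_{ij}|\leq b_n^\beta\}}=\mu_n+\xi_{ij},\qquad \mu_n\coloneqq\E\left[a_{ij}\bbm{1}_{\{|a_{ij}|\leq b_n^\beta\}}\right],
\]
where $\xi_{ij}$ is centered and $|\xi_{ij}|\leq 2b_n^\beta$, yielding $A^{\mathsf{sm}}_n=\mu_n\bm 1\bm 1^\T+\Xi_n$. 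For the rank-one piece, a Hölder computation gives $\|\mu_n\bm 1\bm 1^\T\|_{r\to p}=|\mu_n|n^{1+\gamma}$; since the entries of $A_n$ are centered, $|\mu_n|$ equals the truncated tail mean, which by \eqref{ccdf} and \eqref{eqn:order-bn} satisfies $|\mu_n|=O_\delta(b_n^{\beta(1-\alpha)})$ (recall $\alpha>\alpha_*(\gamma)>1$). For the centered fluctuation $\Xi_n$, I would reduce $\|\Xi_n\|_{r\to p}$ via \Cref{lem:new-upper-bound} to a concentration statement about weighted row sums, and apply Bernstein's inequality using the uniform bound $|\xi_{ij}|\leq 2b_n^\beta$ and the variance estimate $\E[\xi_{ij}^2]=O_\delta(b_n^{\beta(2-\alpha)})$, followed by a union bound over the $n$ rows.

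The technical heart of the argument is to verify that the threshold $\bar\alpha_{*,1}(\gamma)=\min\{2,1/\gamma\}$ is exactly the largest range of $\alpha\in(\alpha_*(\gamma),2)$ for which a single admissible $\beta\in(0,1)$ makes the rank-one contribution, the Bernstein fluctuation, and the row-sparsity of $A^{\mathsf{int}}_n$ all simultaneously $o(b_n)$; the two constraints $\alpha<2$ and $\alpha\gamma<1$ emerge naturally from matching the exponents $b_n^{\beta(1-\alpha)}n^{1+\gamma}$ and $\sqrt{n\,b_n^{\beta(2-\alpha)}}\,n^{\gamma}$ against $b_n=\Theta_\delta(n^{2/\alpha})$. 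Once such a $\beta$ is fixed, Slutsky's theorem assembles the three convergences into the claimed weak limit $\left(Y_{\alpha\gamma}\right)^\gamma$.
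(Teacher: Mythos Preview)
Your overall architecture (decompose, transfer the large and intermediate analyses from \Cref{thm:rtop2}, and redo only $A^{\mathsf{sm}}$) matches the paper, and your treatment of the rank-one mean piece $\mu_n\bm 1\bm 1^\top$ is fine. The gap is in your plan for the centered fluctuation $\Xi_n$.

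You propose to bound $\|\Xi_n\|_{r\to p}$ via \Cref{lem:new-upper-bound} and then apply Bernstein to the row sums. But \Cref{lem:new-upper-bound} controls $\|\Xi_n\|_{r\to p}$ through $\sum_{j}|\xi_{ij}|$, not $\sum_j \xi_{ij}$: the absolute values destroy the centering you just engineered. Bernstein applied to $\sum_j|\xi_{ij}|$ concentrates that quantity around its mean $n\,\E|\xi_{11}|$, which is of order $n$ since $\alpha>1$; plugging this back into \Cref{lem:new-upper-bound} gives only $\|\Xi_n\|_{r\to p}\le O_\delta(n^{1+\gamma})$, and $n^{1+\gamma}=o(b_n)$ is equivalent to $\alpha<2/(1+\gamma)=\alpha_*(\gamma)$ --- exactly the regime you have \emph{left}. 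Your exponent $\sqrt{n\,b_n^{\beta(2-\alpha)}}\,n^\gamma$ is the Bernstein deviation for the signed sum $\sum_j\xi_{ij}$, but there is no way to feed that signed sum into \Cref{lem:new-upper-bound}; a bound of the form $\|\Xi_n\|_{r\to p}\le n^\gamma\max_i\bigl|\sum_j\xi_{ij}\bigr|$ is simply false (consider any matrix whose rows sum to zero).

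The paper's fix is to route through the \emph{spectral} norm rather than through row sums: one has $\|A^{\mathsf{sm}}\|_{r\to p}\le n^{\gamma'}\|A^{\mathsf{sm}}\|_{2\to 2}$ with $\gamma'=\tfrac{1}{\min\{2,p\}}-\tfrac{1}{\max\{2,r\}}$ by H\"older (\Cref{l:later-claim}), and $\|A^{\mathsf{sm}}\|_{2\to 2}=O_\delta(n^{(1+\eta)/\alpha})$ follows from a matrix-concentration bound of Bandeira--van Handel type after centering and symmetrization (\Cref{l:path-counting}, using \Cref{l:near-centered}). This is where the zero-mean hypothesis is actually exploited --- in a genuinely matrix inequality, not a scalar one --- and it yields $\|A^{\mathsf{sm}}\|_{r\to p}=o(b_n)$ precisely on the range $\alpha_*(\gamma)\le\alpha<\bar\alpha_{*,1}(\gamma)$.
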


\Cref{thm:rtop3}, whose proof is given in  \Cref{s:rtop3}, is the analogue of \Cref{thm:gro3}: as before, in this regime the non-centered matrix exhibits different asymptotics. 

\begin{theorem}\label{thm:rtop3}
Fix $1< p<r<\infty$, $\mu\neq 0$ and denote $\gamma\coloneqq\frac{1}{p}-\frac{1}{r}$ and $\gamma'\coloneqq \frac{1}{\min\{2,p\}}-\frac{1}{\max\{2,r\}}$. Define $\alpha_*\equiv\alpha_*(\gamma)\coloneqq\frac{2}{1+\gamma}$ and $\bar\alpha_{*,2}\equiv\bar\alpha_{*,2}(\gamma,\gamma')\coloneqq \frac{2-\gamma }{\gamma(\gamma'-\gamma)+1}$. Suppose the sequence $A_n=\left(a_{ij}\right)_{i,j\in[n]}$, $n \in \N$, satisfies \Cref{assume} with $\alpha$, such that
\begin{align}\label{rtop3-cond}
\begin{split}
    \alpha_*(\gamma)<\alpha<\bar\alpha_{*,2}(\gamma,\gamma').
\end{split}
\end{align}
Define
\begin{align}
\begin{split}
    A_{\mu,n}\coloneqq \mu\bm 1\bm 1^\T+A_n=(\mu+a_{ij})_{i,j\in[n]}.
\end{split}
\end{align}
Then with $b_n$ as defined in \eqref{d:b},
\begin{align}
\begin{split}
    b_n^{-\frac{1}{\gamma}}n^{\frac{1}{\gamma}-\gamma}\left(\|A_{\mu,n}\|_{r\rightarrow p}-n^{1+\gamma}|\mu|\right)\rightarrow Y_{\alpha\gamma},
\end{split}
\end{align}
where $Y_{\alpha\gamma}$ is an $\alpha\gamma$-stable random variable.
\end{theorem}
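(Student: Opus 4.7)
The plan is to adapt the sandwich argument developed for the Grothendieck analogue \Cref{thm:gro3} to the bilinear optimization underlying \ref{opt2}. Since $\alpha>\alpha_*(\gamma)>1$ throughout this regime, the first moment of the entries exists, and by absorbing any nonzero mean of $a_{ij}$ into $\mu$ one may assume the $a_{ij}$ are centered. A direct computation shows that the leading order of $\|A_{\mu,n}\|_{r\to p}$ equals $\|\mu\mathbf{1}\mathbf{1}^\T\|_{r\to p}=|\mu|n^{1+\gamma}$, attained at the pair $(x_0,y_0)=(n^{-1/r}\mathbf{1},\sgn(\mu)n^{-1/p^*}\mathbf{1})$. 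The condition $\alpha>\alpha_*(\gamma)$ forces $b_n \ll n^{1+\gamma}$, so the rank-one term dominates; the task is then to describe the fluctuations at the polynomially smaller scale $b_n^{1/\gamma}n^{\gamma-1/\gamma}$.

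Decompose $A_n = A^{\mathsf{la}}_n + A^{\mathsf{int}}_n + A^{\mathsf{sm}}_n$ as in \eqref{a-split} according to entry magnitude. The first step is to show that $\|A^{\mathsf{sm}}_n\|_{r\to p}$ is negligible at the fluctuation scale. As in the analogous step for \Cref{thm:gro3}, this proceeds by interpolating $\|\cdot\|_{r\to p}$ between $\|\cdot\|_{2\to 2}$ and coordinate norms: the exponent $\gamma'=1/\min\{2,p\}-1/\max\{2,r\}$ records exactly this interpolation cost when $p$ or $r$ lies on the opposite side of $2$, and the upper threshold $\alpha<\bar\alpha_{*,2}(\gamma,\gamma')$ is calibrated precisely so that known bounds on the spectral norm of truncated Levy random matrices yield $\|A^{\mathsf{sm}}_n\|_{r\to p}=o(b_n^{1/\gamma}n^{\gamma-1/\gamma})$ \whp.

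The core argument is then a sandwich for $\|\mu\mathbf{1}\mathbf{1}^\T + A^{\mathsf{la}}_n + A^{\mathsf{int}}_n\|_{r\to p}$. For the upper bound, I would apply \Cref{lem:new-upper-bound} and Taylor-expand $\|\mu\mathbf{1} + v_i\|_{r^*}$ around $|\mu|n^{1/r^*}$, where $v_i$ is the $i$-th row of $A^{\mathsf{la}}_n+A^{\mathsf{int}}_n$; the zeroth-order contribution recovers $|\mu|n^{1+\gamma}$, while the subleading correction assembles into a weighted sum of the $a_{ij}$. For the matching lower bound, evaluate $y^\T A_{\mu,n} x$ at a perturbative ansatz $(x^*,y^*)$ informally obtained from one step of Boyd's nonlinear power method applied to $A_{\mu,n}$ starting from $(x_0,y_0)$; the method is not directly applicable because $A_{\mu,n}$ is not nonnegative, but its formal first iterate still prescribes the right correction to $(x_0,y_0)$ in the direction of the row-sum vector $A_n\mathbf{1}$ and the sparse structure of $A^{\mathsf{la}}_n$.

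The main obstacle will be matching these two bounds at the fluctuation scale. Unlike the centered regime of \Cref{thm:rtop2'}, where both bounds converge to the same $\alpha\gamma$-stable limit at scale $b_n$, here both bounds agree to the leading order $|\mu|n^{1+\gamma}$ and the sandwich must be executed at the strictly smaller scale $b_n^{1/\gamma}n^{\gamma-1/\gamma}\ll b_n$. Consequently the power-method ansatz must be computed to enough accuracy, and the row-sum expansion carried beyond the first-order Taylor term, so that the residual between upper and lower bounds is $o(b_n^{1/\gamma}n^{\gamma-1/\gamma})$. Once matched, both bounds reduce to the same weighted sum of the heavy-tailed entries $a_{ij}$; because these weights themselves arise from a bilinear optimization over $(x,y)$, the effective stability index in the resulting stable central limit theorem is $\alpha\gamma$ rather than $\alpha$, yielding the claimed $\alpha\gamma$-stable limit after normalization by $b_n^{1/\gamma}/n^{1/\gamma-\gamma}$.
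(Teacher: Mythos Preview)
Your proposal follows the same architecture as the paper: decompose $A_n$, control $\|A^{\mathsf{sm}}\|_{r\to p}$ via the spectral-norm interpolation (this is exactly what $\gamma'$ and the threshold $\bar\alpha_{*,2}$ encode, matching \Cref{l:later-claim}), and sandwich $\|\mu\mathbf{1}\mathbf{1}^\T+A^{\mathsf{la}}+A^{\mathsf{int}}\|_{r\to p}$ between a row-sum upper bound from \Cref{lem:new-upper-bound} and a lower bound from a power-method test pair. (A minor point of presentation: in the paper the $r\to p$ result is proved first and \Cref{thm:gro3} is derived from it as the special case $p=r^*$, not the other way around.)

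There is, however, one place where your description could lead you to the wrong computation. You describe the upper bound as a Taylor expansion of the row norms ``around $|\mu|n^{1/r^*}$'' with the subleading term ``a weighted sum of the $a_{ij}$'', and the lower-bound ansatz as a correction to $(x_0,y_0)$ ``in the direction of the row-sum vector $A_n\mathbf{1}$''. Both phrasings suggest a \emph{linear} correction $\sum_{i,j}c_{ij}a_{ij}$, which would have stability index $\alpha$, not $\alpha\gamma$. The actual mechanism is nonlinear: in the row-sum bound $\bigl(\sum_i(\sum_j|\mu+(A^{\mathsf{la}}+A^{\mathsf{int}})_{ij}|)^{1/\gamma}\bigr)^\gamma$, any row that contains an entry of $A^{\mathsf{la}}$ has its sum dominated by that single entry (since $|a^{\mathsf{la}}_{ij}|\ge n^{(2-\zeta)/\alpha}\gg n|\mu|$), so for those rows one expands around $|a^{\mathsf{la}}_{ij}|$, not around $n|\mu|$; the resulting fluctuation term is $\sum_{i,j}|a_{ij}|^{1/\gamma}$, which is what produces the index $\alpha\gamma$. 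Correspondingly, the lower-bound vectors in the paper are not small perturbations of $n^{-1/r}\mathbf{1}$: they take the form $\hat x_i\propto(n|\mu|)^{p/(r-p)}\sgn(\mu)+\psi_{r/(r-p)}\bigl((A^{\mathsf{la}}\mathbf{1})_i\bigr)$ (and analogously for $\hat y$), which on rows carrying a large entry is dominated by the second term. This is the \emph{fixed point} of the power-method recursion on the exponent, not one step from $(x_0,y_0)$; see the discussion in \Cref{rem:lbound}. You clearly anticipate the correct index $\alpha\gamma$, so you may already have this in mind, but the ``Taylor expansion around the mean plus linear correction'' heuristic will not reach it and would have to be abandoned midway.
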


\subsection{Organization of the rest of the paper}\label{ss:organization}
As shown in \eqref{ineq:dual}, the $r\rightarrow r^*$ operator norm provides a natural upper bound for $M_r(A)$. For this reason, we first present the proofs of the $r\rightarrow p$ operator norm, with  the proofs  of \Cref{thm:rtop1}, \Cref{thm:rtop2} and \Cref{thm:rtop3} provided in \Cref{s:rtop1}, \Cref{s:rtop2} and \Cref{s:rtop3}.  The proofs of results on  the $\ell_r$-Grothendieck problem are then completed in  
\Cref{s:gro}. 

\section{Proof of Theorem \ref{thm:rtop1}}\label{s:rtop1}

 For brevity, in this and subsequent sections, we  often suppress the subscript $n$ of $A_n$ when it is clear from the context.    We start by recalling the limiting distribution of the largest entry of a heavy-tailed matrix  (after proper normalization) identified in \cite[(24)]{Sosh04}.
 
\begin{lemma}{\cite[(24)]{Sosh04}}\label{Frechet}
Fix $\alpha\in(0,2)$. Suppose the sequence $A_n=\left(a_{ij}\right)_{i,j\in[n]}$, $n \in \N$, satisfies \Cref{assume}  with $\alpha \in (0,2)$, and let $(b_n)_{n\in\N}$ be as in \eqref{d:b}. Then the following holds:
\begin{align}\label{eqn:a_*Dist}
\begin{split}
    \lim_{n\rightarrow\infty}\P\left(b_n^{-1}\max_{i,j\in[n]}\left|a_{ij}\right|\leq x\right)=\exp\left(-x^{-\alpha}\right).
\end{split}
\end{align}
\end{lemma}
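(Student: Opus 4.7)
The plan is to reduce the claim to a standard extreme value statement for i.i.d.\ heavy-tailed variables by exploiting the symmetry of $A_n$. By \Cref{assume}, the random variables $\{a_{ij}: 1\leq i\leq j\leq n\}$ are i.i.d.\ copies of a heavy-tailed variable with index $\alpha$, and symmetry gives $\max_{i,j\in[n]}|a_{ij}| = \max_{1\leq i\leq j\leq n}|a_{ij}|$, which is a maximum of $N_n \coloneqq n(n+1)/2$ i.i.d.\ samples. Thus for any $x>0$, independence yields
\begin{equation*}
\P\!\left(b_n^{-1}\max_{i,j\in[n]}|a_{ij}|\leq x\right) = \bigl(1-\bar F_\alpha(b_n x)\bigr)^{N_n}.
\end{equation*}

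Next I would show that $N_n\bar F_\alpha(b_n x)\to x^{-\alpha}$ for every fixed $x>0$. Writing $\bar F_\alpha(y)=y^{-\alpha}L(y)$ and comparing at $y=b_n x$ and $y=b_n$, one gets
\begin{equation*}
\frac{\bar F_\alpha(b_n x)}{\bar F_\alpha(b_n)} = x^{-\alpha}\,\frac{L(b_n x)}{L(b_n)}.
\end{equation*}
Since $b_n\to\infty$ by \eqref{eqn:order-bn} and $L$ is slowly varying, the defining relation \eqref{eqn:slowly-varying} gives $L(b_n x)/L(b_n)\to 1$. Combining this with \eqref{eqn:property-slowly-varying}, which states $N_n\bar F_\alpha(b_n)\to 1$, yields $N_n\bar F_\alpha(b_n x)\to x^{-\alpha}$, as required.

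Finally I would pass to the limit. Since $N_n\to\infty$ while $N_n\bar F_\alpha(b_n x)$ converges to a finite limit, the elementary fact $(1-c_n/N_n)^{N_n}\to e^{-c}$ whenever $c_n\to c\in[0,\infty)$ gives
\begin{equation*}
\bigl(1-\bar F_\alpha(b_n x)\bigr)^{N_n}\longrightarrow \exp(-x^{-\alpha}),
\end{equation*}
which is exactly the Fr\'echet distribution function at $x$. For $x\leq 0$ the probability on the left is $0$ for all $n$, matching the Fr\'echet distribution (after adopting the convention $\exp(-x^{-\alpha})=0$ for $x\leq 0$), and this completes the argument.

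The only subtle point is the uniform control of the slowly varying factor $L(b_n x)/L(b_n)\to 1$ at a single $x$; this follows immediately from the pointwise definition of slow variation and does not require the uniform convergence theorem, so I do not anticipate any genuine obstacle. The entire argument is essentially a direct application of classical Fisher--Tippett--Gnedenko theory to the $N_n$ i.i.d.\ upper-triangular entries, with the slight wrinkle that the sample size grows quadratically in $n$, which is already accounted for in the definition \eqref{d:b} of $b_n$.
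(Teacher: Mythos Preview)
Your argument is correct and is exactly the standard Fisher--Tippett--Gnedenko computation for the maximum of i.i.d.\ regularly varying samples. Note, however, that the paper does not supply its own proof of this lemma: it is quoted directly from \cite[(24)]{Sosh04} and used as a black box, so there is no ``paper's proof'' to compare against beyond the citation. Your write-up simply fills in the classical details that the paper omits.
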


Next, we identify the condition that should be satisfied by the maximizing vector of \ref{opt2} when $1<r,p<\infty$. 

\begin{lemma}{\cite[(4.4)]{dhara2020r}} \label{lemma:characterization-max-vec}
Fix $1<r,p<\infty$ and an $n\times n$ matrix $V$. Let $\bm v\equiv\bm v_n=(v_i)_{i\in[n]}$ be a maximizing vector of \ref{opt2} associated with $V$ with $\|\bm v\|_r=1$.
Then the following relation holds:
\begin{equation}\label{eqn:gammaMaxVec}
V^\trans \Psi_{p}(V \bm{v})=\|V\|_{r\rightarrow p}^p \Psi_r(\bm{v}).
\end{equation}
\end{lemma}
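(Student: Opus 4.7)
The plan is to recognize the identity as a first-order optimality (Lagrange multiplier) condition for a smooth reformulation of the maximization defining $\|V\|_{r\to p}$. Since $r,p\in(1,\infty)$, both the map $\bm x\mapsto\|V\bm x\|_p^p$ and the constraint $\bm x\mapsto\|\bm x\|_r^r$ are continuously differentiable on $\R^n\setminus\{0\}$, and the objective is maximized on the boundary $\|\bm x\|_r=1$ (else one could rescale $\bm v$ to increase $\|V\bm v\|_p$). So I would replace the original problem by
\begin{equation*}
\text{maximize } f(\bm x)\coloneqq \|V\bm x\|_p^p \quad \text{subject to}\quad g(\bm x)\coloneqq \|\bm x\|_r^r = 1.
\end{equation*}

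Next I would compute the gradients. Using the definition of $\psi_q$ in \eqref{def:psi} and the chain rule, for each $j\in[n]$,
\begin{equation*}
\frac{\partial f}{\partial x_j}(\bm x) = p\sum_{i=1}^n V_{ij}\,|(V\bm x)_i|^{p-1}\sgn((V\bm x)_i) = p\bigl(V^\T\Psi_p(V\bm x)\bigr)_j,
\end{equation*}
while $\partial g/\partial x_j(\bm x)=r\psi_r(x_j)=r(\Psi_r(\bm x))_j$. Since $\nabla g(\bm v)\neq 0$ on the constraint set (as $\bm v\neq 0$), the constraint qualification is trivial, and there exists a Lagrange multiplier $\lambda\in\R$ with
\begin{equation*}
p\,V^\T\Psi_p(V\bm v) \;=\; \lambda\, r\,\Psi_r(\bm v).
\end{equation*}

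To identify $\lambda$, I would take the inner product of both sides with $\bm v$. On the left, using $\bm v^\T V^\T\Psi_p(V\bm v)=(V\bm v)^\T\Psi_p(V\bm v)=\sum_i|(V\bm v)_i|^p=\|V\bm v\|_p^p$, and on the right, using $\bm v^\T\Psi_r(\bm v)=\sum_i|v_i|^r=\|\bm v\|_r^r=1$, one obtains $p\|V\bm v\|_p^p=\lambda r$. Since $\bm v$ is a maximizer on $\|\bm v\|_r=1$, we have $\|V\bm v\|_p=\|V\|_{r\to p}$, so $\lambda=(p/r)\|V\|_{r\to p}^p$. Substituting back and cancelling the common factor $p$ (and $r$) yields precisely the claimed identity \eqref{eqn:gammaMaxVec}.

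The main (mild) obstacle is ensuring the Lagrange argument is applied at an interior point of the smooth region: since the map $x\mapsto |x|^q$ is continuously differentiable at every point (its derivative being $q\psi_q$) for $q>1$, there is no differentiability issue at coordinates where $v_i=0$ or $(V\bm v)_i=0$, so the computation above is valid without any case analysis. The only subtlety worth noting is that we implicitly use that the max in \ref{opt2} is attained (by compactness of the $\ell_r$-ball and continuity of $\bm x\mapsto\|V\bm x\|_p$) and is strictly positive whenever $V\neq 0$, so that $\bm v\neq 0$.
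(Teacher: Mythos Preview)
Your proof is correct. The paper does not supply its own proof of this lemma; it simply cites the identity from \cite[(4.4)]{dhara2020r}, so there is no proof in the paper to compare against. Your Lagrange-multiplier derivation is exactly the standard argument one would expect for this result, and your handling of the differentiability of $x\mapsto|x|^q$ for $q>1$ and the identification of the multiplier via the inner product with $\bm v$ are both clean and complete.
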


We will make repeated use of a well known result that describes the typical structure of heavy-tailed random matrix $A$. 
\begin{lemma}{\cite[Lemma~4]{Sosh04}} \label{lemma:typicalBehaviour} 
Suppose $A_n=\left(a_{ij}^n\right)_{i,j\in[n]}, n \in \N$,  satisfies \Cref{assume} with $\alpha\in(0,2)$. Then the following statements are true: 
\begin{enumerate}[label=(\alph*)]
\item \whp, 
$\left\{i\in[n]:\ a_{ii}\geq b_n^{11/20}\right\}=\emptyset$;
\item $\forall\,\delta>0$, \whp, $\forall\,i\in[n]$, $\#\left\{j\in[n]:\,\left|a_{ij}\right|\geq b_n^{3/4+\delta}\right\}\leq 1$;
\item $\exists\,\delta(\alpha)>0$, such that, \whp, $\forall\,i\in[n]$, $\sum_{j\in[n]}\left|a_{ij}\right|\bbm 1\left\{\left|a_{ij}\right|\leq b_n^{3/4+\delta(\alpha)}\right\}\leq b_n^{3/4+\frac{\alpha}{8}}$.
\end{enumerate}
\end{lemma}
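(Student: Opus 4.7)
The overall plan is to reduce each claim to a union bound combined with tail estimates that follow from the heavy-tail assumption $\bar F(x) = x^{-\alpha} L(x)$ and the growth relation $b_n \sim L_o(n) n^{2/\alpha}$ from \eqref{eqn:order-bn}; together these imply $\bar F(b_n^\beta) \lesssim n^{-2\beta + o(1)}$, where the slowly varying factors are absorbed by any positive power of $n$ via \Cref{l:slowly-varying-property}. For part (a), a union bound over the $n$ diagonal entries gives $\P(\exists\, i : |a_{ii}| \geq b_n^{11/20}) \leq n \bar F(b_n^{11/20}) \lesssim n^{-1/10 + o(1)} \to 0$, since $2 \cdot 11/20 > 1$. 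For part (b), a union bound over rows and ordered pairs of distinct columns yields $\P(\exists\, i,\ j_1 \neq j_2 : |a_{ij_1}|, |a_{ij_2}| \geq b_n^{3/4+\delta}) \leq n^3 \bar F(b_n^{3/4+\delta})^2 \lesssim n^{-4\delta + o(1)} \to 0$, where the hypothesis $\delta > 0$ is used to beat the slowly varying correction.

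Part (c) is the most delicate, and will determine the choice of $\delta(\alpha)$. The plan is to fix a truncation level $t_n := b_n^{3/4+\delta}$ and study the row sums $S_i := \sum_{j \in [n]} |a_{ij}| \bbm 1\{|a_{ij}| \leq t_n\}$. Using the identity $\E[|X|\bbm 1\{|X| \leq t\}] = \int_0^t \bar F(x)\, dx - t\bar F(t)$ together with the tail formula, one obtains $\E[S_i] \asymp n t_n^{1-\alpha} L(t_n)$ when $\alpha \in (0,1)$, while $\E[S_i] = O(n)$ when $\alpha \in [1,2)$ (with a logarithmic correction at $\alpha = 1$). Similarly, $\Var[S_i] \leq n\,\E[X^2 \bbm 1\{|X| \leq t_n\}] \asymp n t_n^{2-\alpha} L(t_n)$, which is finite since $\alpha < 2$.

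Because the truncated summands are almost surely bounded by $t_n$, the concentration step is to apply Bernstein's inequality, $\P(|S_i - \E[S_i]| > \lambda) \leq 2 \exp(-c \lambda^2 /(\Var[S_i] + t_n \lambda))$, and then union bound over the $n$ rows. Setting $\lambda = \tfrac{1}{2} b_n^{3/4+\alpha/8}$, the proof reduces to two algebraic conditions on $\delta$: an upper bound ensuring $\E[S_i] \leq \tfrac{1}{2} b_n^{3/4+\alpha/8}$, and an upper bound ensuring $\min(\lambda^2/\Var[S_i],\, \lambda/t_n)$ grows faster than $\log n$. Using $n \asymp b_n^{\alpha/2}$, both conditions are met by any $\delta(\alpha) < \alpha/8$, for instance $\delta(\alpha) := \alpha/16$. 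The main obstacle is precisely this last step: Chebyshev's inequality alone is too weak, since a direct computation shows $n\,\Var[S_i]/\lambda^2$ contains an extra factor $b_n^{\delta(2-\alpha)}$ that cannot be killed for any $\delta > 0$. It is Bernstein's exponential tail, which exploits the deterministic bound $t_n$ on the summands, that allows the union bound over the $n$ rows to close.
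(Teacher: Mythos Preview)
The paper does not supply its own proof of this lemma; it is quoted verbatim as \cite[Lemma~4]{Sosh04} and used as a black box. So there is no in-paper argument to compare against.

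Your proposal is correct and self-contained. Parts (a) and (b) are exactly the right union-bound computations, using $\bar F(b_n^\beta)=n^{-2\beta+o(1)}$; the independence needed in (b) holds because the $n$ entries in any fixed row of a symmetric matrix correspond to $n$ distinct unordered index pairs and are therefore i.i.d. For part (c) your exponent bookkeeping checks out: with $n\asymp b_n^{\alpha/2}$ one finds $\E[S_i]\le b_n^{3/4-\alpha/4+\delta(1-\alpha)+o(1)}$ and $\Var[S_i]\le b_n^{3/2-\alpha/4+\delta(2-\alpha)+o(1)}$, and choosing $\lambda=\tfrac12 b_n^{3/4+\alpha/8}$ the Bernstein exponent is of order $\min\bigl(b_n^{\alpha/2-\delta(2-\alpha)},\,b_n^{\alpha/8-\delta}\bigr)$, which beats $\log n$ precisely when $\delta<\alpha/8$. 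Your observation that Chebyshev fails (the union-bounded second-moment estimate carries an unavoidable factor $b_n^{\delta(2-\alpha)}$) is also correct and is the genuine reason Bernstein is needed here. One small point to make explicit when writing this up: Bernstein's inequality as stated in the paper (Lemma~\ref{bers-ineq}) requires mean-zero summands, so you are applying it to the centered variables $|a_{ij}|\bbm 1\{|a_{ij}|\le t_n\}-\E[\cdot]$, which are bounded by $2t_n$; this only affects constants.
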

The following inequality is a direct consequence of Lemma \ref{lemma:typicalBehaviour}.
\begin{corollary}
Suppose $A_n=\left(a_{ij}\right)_{i,j\in[n]}, n \in \N,$ satisfies \Cref{assume} with $\alpha\in(0,2)$. Let $a_*\coloneqq\max_{i,j\in[n]}|a_{ij}|$. Then \whp, we have
\begin{equation}\label{eqn:sumOfTypicalRow}
\max_{i\in[n]}\sum_{j\in[n]} |a_{i j}| \leq a_{*}(1+o(1))\quad\text{and}\quad\max_{j\in[n]}\sum_{i\in[n]}\left|a_{ij}\right|\leq a_*\left(1+o(1)\right).
\end{equation}
\end{corollary}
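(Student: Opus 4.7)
The plan is to split each row sum at the threshold $b_n^{3/4+\delta(\alpha)}$ supplied by part (c) of \Cref{lemma:typicalBehaviour}, and estimate the two pieces separately using parts (b) and (c). The strategy for the column sums is identical, using the symmetry $A_n = A_n^{\top}$.

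Concretely, fix the constant $\delta(\alpha)>0$ from \Cref{lemma:typicalBehaviour}(c), and for each $i \in [n]$ write
\begin{equation*}
\sum_{j \in [n]} |a_{ij}| = \sum_{j \in [n]} |a_{ij}|\, \bbm 1\{|a_{ij}| > b_n^{3/4+\delta(\alpha)}\} + \sum_{j \in [n]} |a_{ij}|\, \bbm 1\{|a_{ij}| \leq b_n^{3/4+\delta(\alpha)}\}.
\end{equation*}
For the first sum, \Cref{lemma:typicalBehaviour}(b) (applied with $\delta = \delta(\alpha)$) implies that, \whp, every row contains at most one index $j$ satisfying $|a_{ij}| \geq b_n^{3/4+\delta(\alpha)}$; hence that sum is bounded above by $a_*$, uniformly in $i$. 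For the second sum, \Cref{lemma:typicalBehaviour}(c) directly yields the uniform-in-$i$ bound $b_n^{3/4+\alpha/8}$ \whp.

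Combining these two estimates gives $\max_{i\in[n]} \sum_{j\in[n]} |a_{ij}| \leq a_* + b_n^{3/4+\alpha/8}$ \whp. To convert the additive error into the multiplicative factor $(1+o(1))$ claimed in the corollary, I would invoke \Cref{Frechet}, which says that $b_n^{-1} a_*$ converges in distribution to a Fr\'echet law supported on $(0,\infty)$. In particular, for any $\epsilon > 0$ one can find $c>0$ with $\mathbb{P}(a_* \geq c b_n) \to 1$, so $a_*$ is of order $b_n$ \whp. Since $\alpha < 2$ gives $3/4+\alpha/8 < 1$, we have $b_n^{3/4+\alpha/8} = o(b_n) = o(a_*)$ \whp, and the multiplicative error bound follows.

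For the column sums, the symmetry of $A_n$ gives $\sum_{i} |a_{ij}| = \sum_{i} |a_{ji}|$, which is itself a row sum, so the same argument applies verbatim. There is no essential obstacle here; the proof is a direct bookkeeping exercise on top of \Cref{lemma:typicalBehaviour} and \Cref{Frechet}, and the only point that requires minimal care is verifying that $3/4+\alpha/8 < 1$ for $\alpha \in (0,2)$ so that the subleading term can be absorbed into the $(1+o(1))$ factor.
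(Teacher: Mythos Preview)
Your proof is correct and follows essentially the same approach as the paper: both arguments split each row at the threshold $b_n^{3/4+\delta(\alpha)}$, invoke \Cref{lemma:typicalBehaviour}(b) to bound the large part by $a_*$ and \Cref{lemma:typicalBehaviour}(c) to bound the small part by $b_n^{3/4+\alpha/8}=o(a_*)$, and use \Cref{Frechet} to ensure $a_*$ is of order $b_n$. The paper phrases the split as a case analysis on whether the row contains a large entry, while you write it as a direct decomposition of the sum, but the content is identical.
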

\begin{proof}
Due to the symmetry of $A$, it suffices to only prove the first inequality. 

Note that $a_*$ is of order $O(b_n)$ according to \eqref{eqn:a_*Dist}. Fix $i\in[n]$ and consider the event $\mathcal E_i$ that $a_*$ appears in row $i$ or there exists one entry in row $i$ that is at least $b_n^{3/4+\delta(\alpha)}$. Lemma \ref{lemma:typicalBehaviour} (b) implies that, \whp\ on $\mathcal E_i$, all the remaining entries are less than $b_n^{3/4+\delta(\alpha)}$. Then by \Cref{lemma:typicalBehaviour} (c), \whp\ on $\mathcal E_i$, the sum of the remaining terms is of order $o(a_*)$.

On the other hand, on the event $\mathcal E_i^c$, that is when $a_*$ is not in row $i$ and every entry in row $i$ is less than $b_n^{3/4+\delta(\alpha)}$, Lemma \ref{lemma:typicalBehaviour} (c) implies that, \whp\ the sum of this row is $o(a_*)$. This proves the corollary.
\end{proof}

\begin{proof}[Proof of Theorem \ref{thm:rtop1}] Let $\bm v=(v_i)_{i\in[n]}$ be a maximizing vector of \ref{opt2} with $\|\bm v\|_r=1$.
Let $v_*\coloneqq\max_{i\in[n]}|v_i|$ and $a_*\coloneqq\max_{i,j\in[n]}|a_{ij}|$. 

We first show that, for $1\leq r\leq p\leq\infty$,
\begin{align}\label{eqn:rEpUpper}
\begin{split}
    \|A\|_{r\rightarrow p}\leq a_*\left(1+o(1)\right).
\end{split}
\end{align}
This is proved separately for three parameter regimes.
\begin{enumerate}
\item Suppose $1<r\leq p<\infty$. Using \Cref{lemma:characterization-max-vec} and the definition of $\Psi_p,\Psi_r$ in \eqref{def:psi} in the first line, the definition of $v_*$ and \eqref{eqn:sumOfTypicalRow} in the second and third lines, respectively, and again using \eqref{eqn:sumOfTypicalRow} in the last line below, we conclude that \whp, for all $i\in[n]$,
\begin{equation}\label{eqn:rEpUpperDerivation}
\begin{aligned}
\|A\|_{r\rightarrow p}^{p} |v_{i}|^{r-1} & = \left|\sum_{j=1}^{n} a_{i j}\left|\sum_{k=1}^{n} a_{j k}v_k\right|^{p-1}\operatorname{sgn}\left(\sum_{k=1}^{n} a_{j k}v_k\right)\right|\\
&\leq\sum_{j=1}^{n} |a_{i j}|\left(\sum_{k=1}^{n} |a_{j k}|\right)^{p-1} v_*^{p-1} \\
& \leq\sum_{j=1}^{n} |a_{i j}|\ a_{*}^{p-1}(1+o(1))v_{*}^{p-1} \\
& \leq a_{*}^{p} v_{*}^{p-1}(1+o(1)).
\end{aligned}
\end{equation}
Choosing $i$ to be the (random) index such that $|v_i|=v_*\leq 1$ in \eqref{eqn:rEpUpperDerivation}, and recalling $r\leq p$,
we have \whp,
\begin{align}
\begin{split}
    \|A\|_{r\rightarrow p}\leq a_*(1+o(1)).
\end{split}
\end{align}
\item Suppose $r=1$. We have
\begin{align}\label{compare11}
\begin{split}
    \|A\|_{1\rightarrow p}=\sup_{\|\bm x\|_1,\|\bm y\|_{p^*}\leq1}\bm x^\T A\bm y\leq \sup_{\|\bm x\|_1,\|\bm y\|_{\infty}\leq 1}\bm x^\T A\bm y= \|A\|_{1\rightarrow1}=\sup_{\|\bm x\|_1=1}\sum_{i\in[n]}\left|\left(A\bm x\right)_i\right|.
\end{split}
\end{align}
Since $\bm x\mapsto \sum_{i\in[n]}\left|\left(A\bm x\right)_i\right|$ is convex, the maximal value must be obtained at extremal points of the unit $\ell_1$-sphere, i.e. $\bm x\in\left\{\bm e_j\right\}_{j\in[n]}$. Hence, \eqref{eqn:sumOfTypicalRow} and \eqref{compare11} imply
\begin{align}
\begin{split}
    \|A\|_{1\rightarrow p}\leq \max_{j\in[n]}\sum_{i\in[n]}\left|a_{ij}\right|\leq a_*\left(1+o(1)\right).
\end{split}
\end{align}
\item Suppose $p=\infty$. We have
\begin{align}\label{compareinfinf}
\begin{split}
    \|A\|_{r\rightarrow\infty}=\sup_{\|\bm x\|_r,\|\bm y\|_{1}\leq1}\bm x^\T A\bm y\leq \sup_{\|\bm x\|_\infty,\|\bm y\|_{1}\leq 1}\bm x^\T A\bm y= \|A\|_{\infty\rightarrow \infty}=\sup_{\|\bm x\|_\infty=1}\max_{i\in[n]}\left|\left(A\bm x\right)_i\right|.
\end{split}
\end{align}
Since $\bm x\mapsto\max_{i\in[n]}$ is convex, the maximal value must be obtained at extremal points of the unit $\ell_\infty$-sphere, i.e. $\bm x\in\left\{1,-1\right\}^n$. Hence \eqref{eqn:sumOfTypicalRow} and \eqref{compareinfinf} imply
\begin{align}
\begin{split}
    \|A\|_{r\rightarrow \infty}\leq \max_{i\in[n]}\sum_{j\in[n]}\left|a_{ij}\right|\leq a_*\left(1+o(1)\right).
\end{split}
\end{align}
\end{enumerate}

On the other hand, let $(i_*,j_*)$ be a (random) pair of indices such that $a_*=a_{i_*j_*}$. Then we have
\begin{equation}\label{eqn:rEpLower}
\|A\|_{r\rightarrow p} \geq\|A \bm e_{i_*}\|_{p} \geq a_{*}.
\end{equation}

Combining \eqref{eqn:rEpUpper} and \eqref{eqn:rEpLower}, we conclude that 
\begin{align}\label{casepp}
\begin{split}
    \|A\|_{r\rightarrow p}=a_*(1+o(1)).
\end{split}
\end{align}
Hence, \Cref{Frechet} implies that $b_n^{-1}\|A\|_{r\rightarrow p}$ converges to the desired distribution.
\end{proof}

\section{Proofs of Theorem \ref{thm:rtop2} and Theorem \ref{thm:rtop2'}}\label{s:rtop2}
Fix $\alpha\in(0,\min\{2,\frac{rp}{r-p}\})$ and $1\leq p<r\leq \infty$. Define
\begin{align}\label{def:eta-para}
\begin{split}
    \eta&\coloneqq\frac{1}{2}\left(1-\frac{\alpha(r-p)}{rp}\right) ,\\
    \zeta&\coloneqq\min\left\{\frac{1}{4}\left(1-\frac{\alpha(r-p)}{rp}\right),1-\frac{\alpha}{2}\right\}.
\end{split}
\end{align}
Under the assumptions of \Cref{thm:rtop2} or \Cref{thm:rtop2'}, we have $0<\zeta<\eta$.
As mentioned earlier, the main strategy of the proof is to
carefully decompose the heavy-tailed matrix $A$ based on the absolute values of entries, and to control the effect of each of these separately. To this end, define
\begin{align}\label{a-split}
\begin{split}
     &A^{\mathsf{sm}}\equiv( a^{\mathsf{sm}}_{ij})\coloneqq \left(a_{ij}\bbm 1\left( |a_{ij}|\leq n^{\frac{1+\eta}{\alpha}}\right)\right),\\
     &A^{\mathsf{la}}\equiv\left( a^{\mathsf{la}}_{ij}\right)\coloneqq \left(a_{ij}\bbm 1\left(|a_{ij}|>n^{\frac{2-\zeta}{\alpha}}\right)\right),\\
     &A^{\mathsf{int}}\equiv ( a^{\mathsf{int}}_{ij})\coloneqq A- A^{\mathsf{sm}}- A^{\mathsf{la}}.
\end{split}
\end{align}

In \Cref{s:small-bound} and \Cref{s:middle-bound}, we show that both $\| A^{\mathsf{sm}}\|_{r\rightarrow p}$ and $\| A^{\mathsf{int}}\|_{r\rightarrow p}$ are of order $o(b_n)$ \whp, where $b_n$ is the constant defined in \eqref{d:b}. Then in \Cref{s:large-fluct}, we analyze the fluctuations of $b_n^{-1}\| A^{\mathsf{la}}\|_{r\rightarrow p}$. 

\subsection{Estimating the small terms}\label{s:small-bound}
The goal of this section is to prove the following estimate.
\begin{proposition}\label{lem:omit-small}
Under the assumptions of \Cref{thm:rtop2} or \Cref{thm:rtop2'}, and with $b_n$ as defined in \eqref{d:b}, \whp,
\begin{align}
\begin{split}
    \| A^{\mathsf{sm}}\|_{r\rightarrow p}=o\left(b_n\right).
\end{split}
\end{align}
\end{proposition}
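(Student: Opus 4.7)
The plan is to split $A^{\mathsf{sm}}$ into its entrywise mean and a centered fluctuation and control each separately. Write $A^{\mathsf{sm}} = c_n \bm 1 \bm 1^\T + B_n$, where $c_n := \mathbb{E}[a^{\mathsf{sm}}_{11}]$ and $B_n := A^{\mathsf{sm}} - c_n \bm 1 \bm 1^\T$ is a symmetric matrix with i.i.d.\ (above-diagonal) centered entries bounded in absolute value by $2M$, with $M := n^{(1+\eta)/\alpha}$. A direct H\"older computation gives $\|\bm 1 \bm 1^\T\|_{r\to p} = n^{1+\gamma}$, so it suffices to show both $|c_n| n^{1+\gamma} = o(b_n)$ and $\|B_n\|_{r\to p} = o(b_n)$ \whp. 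Standard Karamata-type truncated-moment estimates yield $|c_n| = O(M^{1-\alpha}L(M))$ in the centered regime of \Cref{thm:rtop2'} (since then $c_n = -\mathbb{E}[a_{11}\bbm 1\{|a_{11}|>M\}]$), while in \Cref{thm:rtop2} one has $|c_n| = O(1)$ for $\alpha > 1$ and $|c_n| = O(M^{1-\alpha}L(M))$ for $\alpha < 1$, with a logarithmic adjustment at $\alpha = 1$. Plugging in the definition of $M$ and $\eta = (1-\alpha\gamma)/2$, together with the hypothesis $\alpha < \alpha_*(\gamma)$ in \Cref{thm:rtop2} (respectively $\alpha\gamma < 1$ in \Cref{thm:rtop2'}), direct arithmetic yields $|c_n| n^{1+\gamma} = o(n^{2/\alpha} L_o(n)) = o(b_n)$.

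For the centered fluctuation $B_n$, the plan is to use the interpolation inequality
$$\|B_n\|_{r\to p} \leq n^{\gamma'} \|B_n\|_{2\to 2}, \qquad \gamma' := \frac{1}{\min(2,p)} - \frac{1}{\max(2,r)},$$
which is an immediate consequence of the monotonicity of $\ell^q$-norms on $\mathbb{R}^n$ applied separately to input and output, and then to control the spectral norm by a non-asymptotic random-matrix estimate. Since $B_n$ is symmetric with i.i.d.\ centered entries bounded by $2M$ and variance $\sigma_n^2 = O(M^{2-\alpha}L(M))$ (Karamata again), matrix Bernstein (or a Seginer / Bandeira--van Handel style bound) gives $\|B_n\|_{2\to 2} = O(\sigma_n\sqrt{n} + M\sqrt{\log n})$ \whp. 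Substituting $M$ and $\sigma_n$ into $n^{\gamma'}(\sigma_n\sqrt{n} + M\sqrt{\log n})$ reduces the claim $\|B_n\|_{r\to p} = o(b_n)$ to verifying that the resulting polynomial exponent of $n$ is strictly less than $2/\alpha$, up to slowly varying factors.

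\textbf{The main obstacle} is this final exponent check, which is sensitive to which of the four sub-cases $(p\lessgtr 2,\ r\lessgtr 2)$ one is in, because $\gamma'$ strictly exceeds $\gamma$ whenever $p > 2$ or $r < 2$. The value $\eta = (1-\alpha\gamma)/2$ is precisely calibrated so that the two resulting inequalities, $\alpha\gamma' + 1 + \eta < 2$ (controlling the $M\sqrt{\log n}$ term) and $\alpha\gamma' + \eta(2-\alpha)/2 < 1$ (controlling the $\sigma_n\sqrt{n}$ term), both hold when $\alpha < \alpha_*(\gamma)$, or, in the centered case, when $\alpha < \bar\alpha_{*,1}(\gamma)$; this is verified case by case using the explicit definitions of these thresholds together with the elementary inequality $\alpha\gamma' < 1$ that follows in every sub-regime from the hypotheses. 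If the naive spectral interpolation turns out to be too loose in some corner sub-regime, a fallback is to use instead the H\"older/Schur upper bound $\|A\|_{r\to p} \leq \bigl(\sum_i (\sum_j |a_{ij}|^{r^*})^{p/r^*}\bigr)^{1/p}$ combined with Bernstein's inequality applied to the (centered) row $r^*$-moments, which avoids the interpolation loss $n^{\gamma'}$.
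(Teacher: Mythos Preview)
Your spectral-interpolation route for the centered regime of \Cref{thm:rtop2'} is exactly what the paper does (their Lemmas~4.6--4.7): bound $\|A^{\mathsf{sm}}\|_{2\to 2}=O_\delta(n^{(1+\eta)/\alpha})$ via Bandeira--van~Handel and then pass to $r\to p$ by the $n^{\gamma'}$ interpolation. So on that half you are aligned with the paper.

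The genuine gap is in your treatment of the \Cref{thm:rtop2} regime. Your claim that the exponent inequality $\alpha\gamma'+1+\eta<2$ holds whenever $\alpha<\alpha_*(\gamma)$ is simply false once $\gamma'>\gamma$, i.e.\ whenever $p>2$ or $r<2$. With $\eta=(1-\alpha\gamma)/2$ the inequality rewrites as $\alpha(2\gamma'-\gamma)<1$, and for instance $p=4$, $r=\infty$ gives $\gamma=1/4$, $\gamma'=1/2$, $\alpha_*=8/5$, while the spectral check only allows $\alpha<4/3$; every $\alpha\in(4/3,8/5)$ is covered by \Cref{thm:rtop2} but breaks your bound. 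Your suggested fallback (the row-wise H\"older bound with $\ell_{r^*}$ moments) is not worked out and is \emph{not} what resolves the issue.

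The paper handles the \Cref{thm:rtop2} regime by an entirely different, and simpler, device: the deterministic row-sum inequality (their Lemma~4.2)
\[
\|V\|_{r\to p}\le\Bigl(\sum_i\Bigl(\sum_j|v_{ij}|\Bigr)^{1/\gamma}\Bigr)^{\gamma},
\]
valid for any symmetric $V$, combined with a Bernstein bound on the \emph{absolute} row sums $\sum_j|a^{\mathsf{sm}}_{ij}|=O_\delta(n^{\max\{(1+\eta)/\alpha,\,1\}})$. This yields $\|A^{\mathsf{sm}}\|_{r\to p}=O_\delta(n^{\gamma+\max\{(1+\eta)/\alpha,\,1\}})$, and the required comparison with $n^{2/\alpha}$ reduces \emph{exactly} to $\alpha(1+\gamma)<2$, i.e.\ $\alpha<\alpha_*$. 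No centering, no spectral norm, and no interpolation loss $\gamma'\!-\!\gamma$ enters. So the row-sum bound is precisely tuned to the threshold $\alpha_*$, while your spectral argument is not; the two approaches are complementary, each sharp in its own regime, which is why the paper switches between them.
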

To prove the result, we start by establishing a general upper bound for the $r\rightarrow p$ norm of an arbitrary $n\times n$ matrix.
\begin{lemma}\label{lem:new-upper-bound}
Fix $1\leq p<r\leq \infty$. For any symmetric matrix $V=(v_{ij})_{i,j\in[n]}\in\mathbb R^n$, we have
\begin{align}\label{eqn:new-upper-bound}
\begin{split}
    \left\|V\right\|_{r\rightarrow p}\leq \left(\sum_{i\in[n]}\left(\sum_{j\in[n]}|v_{ij}|\right)^{\frac{rp}{r-p}}\right)^{\frac{r-p}{rp}}.
\end{split}
\end{align}
\end{lemma}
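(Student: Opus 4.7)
The plan is to combine the duality representation of the $\ell_p$-norm with two successive applications of Hölder's inequality, using the symmetry of $V$ to collapse both marginals into the single row-sum vector $\bm R = (R_i)_{i \in [n]}$ with $R_i \coloneqq \sum_j |v_{ij}|$. Set $q \coloneqq rp/(r-p)$, so that $1/q = 1/p - 1/r$, let $q^* \coloneqq q/(q-1)$, and introduce the exponent pair
\[
a \coloneqq \frac{p^*(q-1)}{q}, \qquad b \coloneqq \frac{r(q-1)}{q}.
\]
Using the key identity $1/p^* + 1/r = 1 - 1/q$, one checks that $1/a + 1/b = 1$, so $a$ and $b$ are Hölder conjugate; in the boundary cases $p=1$ or $r=\infty$, one of these equals $\infty$ and is interpreted via the corresponding $\ell_\infty$-norm.

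The first step is to dualize, using $\|V\bm x\|_p = \sup_{\|\bm y\|_{p^*} \leq 1} \bm y^\T V \bm x$, and bound $|\bm y^\T V \bm x| \leq \sum_{i,j} |y_i|\,|v_{ij}|\,|x_j|$. Splitting $|v_{ij}| = |v_{ij}|^{1/a}\,|v_{ij}|^{1/b}$, pairing the factors as $(|y_i|\,|v_{ij}|^{1/a})(|x_j|\,|v_{ij}|^{1/b})$, applying Hölder's inequality over the double index $(i,j)$ with exponents $(a,b)$, and then using the symmetry $\sum_i |v_{ij}| = \sum_i |v_{ji}| = R_j$ yields
\[
\sum_{i,j} |y_i|\,|v_{ij}|\,|x_j| \;\leq\; \Bigl(\sum_i |y_i|^a R_i\Bigr)^{1/a}\Bigl(\sum_j |x_j|^b R_j\Bigr)^{1/b}.
\]
The second step is to apply Hölder again to each of these one-dimensional sums with the conjugate pair $(q^*,q)$. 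Since the definition of $a$ gives $a q^* = p^*$, I obtain
\[
\sum_i |y_i|^a R_i \;\leq\; \Bigl(\sum_i |y_i|^{p^*}\Bigr)^{1/q^*} \|\bm R\|_q \;=\; \|\bm y\|_{p^*}^{\,a}\,\|\bm R\|_q,
\]
and the analogous estimate $\sum_j |x_j|^b R_j \leq \|\bm x\|_r^{\,b}\,\|\bm R\|_q$ follows from $b q^* = r$. Substituting these into the preceding display, using $1/a+1/b = 1$, and taking the supremum over $\|\bm x\|_r \leq 1$, $\|\bm y\|_{p^*} \leq 1$ gives $\|V\|_{r\to p} \leq \|\bm R\|_q$, which is precisely \eqref{eqn:new-upper-bound}.

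The main delicacy is selecting the exponent pair $(a,b)$ so that three simultaneous Hölder constraints are met, namely $1/a + 1/b = 1$ for the outer application and $aq^* = p^*$, $bq^* = r$ for the two inner applications; their compatibility reduces exactly to the identity $1/p^* + 1/r = 1 - 1/q$, i.e., to the definition $q = rp/(r-p)$, and this algebraic matching is the one substantive step in the argument. The symmetry of $V$ is used only to merge the row- and column-sum marginals into the single vector $\bm R$; in the asymmetric setting one would obtain an analogous bound involving both row and column sums.
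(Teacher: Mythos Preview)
Your proof is correct and is essentially the same argument as the paper's: both dualize to the bilinear form $\sum_{i,j} v_{ij}x_iy_j$, split $|v_{ij}|$ into two complementary powers, and apply H\"older twice with identical exponents (your $a,b$ equal the paper's $\frac{rp-r+p}{rp-r}$ and $\frac{rp-r+p}{p}$, respectively), using the symmetry of $V$ to merge the two marginals. The only differences are notational---your introduction of $q=\frac{rp}{r-p}$ and the pair $(a,b)$ makes the exponent bookkeeping a bit more transparent---and your brief remark on the boundary cases $p=1$, $r=\infty$, which the paper does not discuss explicitly.
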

\begin{proof}
Fix any $\bm x,\bm y\in\R^n$ such that $\|\bm x\|_r=\|\bm y\|_{p^*}=1$. Since $r>p$, applying H\"older's inequality twice we obtain
\begin{align}\label{less-instance}
\begin{split}
    \sum_{i,j\in[n]}v_{ij}x_iy_j&\leq \sum_{i,j\in[n]}|v_{ij}|^{\frac{p}{rp-r+p}}|x_i|\cdot |v_{ij}|^{\frac{rp-r}{rp-r+p}}|y_j|\\
    &\leq \left(\sum_{i,j\in[n]}|v_{ij}||x_i|^{\frac{rp-r+p}{p}}\right)^{\frac{p}{rp-r+p}}\left(\sum_{i,j\in[n]}|v_{ij}||y_j|^{\frac{rp-r+p}{rp-r}}\right)^{\frac{rp-r}{rp-r+p}}\\
    &\leq \left(\sum_{i\in[n]}|x_i|^r\right)^{\frac{1}{r}}\left(\sum_{i\in[n]}\left(\sum_{j\in[n]}|v_{ij}|\right)^{\frac{rp}{r-p}}\right)^{\frac{r-p}{r(rp-r+p)}}\\
    &\quad \times \left(\sum_{j\in[n]}|y_j|^{p^*}\right)^{\frac{1}{p^*}}\left(\sum_{j\in[n]}\left(\sum_{i\in[n]}|v_{ij}|\right)^{\frac{rp}{r-p}}\right)^{\frac{r-p}{p^*(rp-r+p)}}\\
    &= \left(\sum_{i\in[n]}\left(\sum_{j\in[n]}|v_{ij}|\right)^{\frac{rp}{r-p}}\right)^{\frac{r-p}{rp}},
\end{split}
\end{align}
where we use $\|\bm x\|_r=\|\bm y\|_{p^*}=1$ and the symmetry of $V$ for the last equality. In view of the corresponding dual formulation \eqref{eq:rtop-dual} of $\|V\|_{r\rightarrow p}$,
one can take the supremum over $\bm x,\bm y$ with $\|\bm x\|_r=\|\bm y\|_{p^*}=1$ on the left-hand side of \eqref{less-instance} to complete the proof.
\end{proof}
We now bound the row sums of $ A^{\mathsf{sm}}$. Note that in conjunction with \Cref{lem:new-upper-bound}, this will yield an upper bound for $\| A^{\mathsf{sm}}\|_{r\rightarrow p}$.
To this end, we need the following standard concentration result \cite[Theorem~2.8.4]{Versh18}.  
\begin{lemma}[Bernstein's inequality]\label{bers-ineq}
Let $X_1, \ldots, X_n$ be independent, mean zero random variables, such that $\left|X_k\right| \leq K$ for all $k\in[n]$. Then, for every $t \geq 0$, we have
\begin{equation}
\mathbb{P}\left\{\left|\sum_{k=1}^n X_k\right| \geq t\right\} \leq 2 \exp \left(-\frac{t^2 / 2}{\sigma^2+K t / 3}\right),
\end{equation}
where $\sigma^2=\sum_{k=1}^n \mathbb{E} [X_k^2]$ is the variance of the sum.
\end{lemma}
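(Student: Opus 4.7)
The plan is to follow the classical Cram\'er--Chernoff approach, since Bernstein's inequality is a moment generating function (MGF) bound. First I would handle only the one-sided tail $\mathbb{P}(\sum_k X_k \geq t)$ and then obtain the two-sided version by applying the same bound to $-X_k$ and taking a union bound (which gives the factor of $2$). For the upper tail, the starting point is Markov's inequality applied to $e^{\lambda \sum_k X_k}$ for a parameter $\lambda > 0$ to be chosen later, giving
\begin{equation*}
\mathbb{P}\Bigl(\sum_{k=1}^n X_k \geq t\Bigr) \leq e^{-\lambda t}\prod_{k=1}^n \mathbb{E}\bigl[e^{\lambda X_k}\bigr],
\end{equation*}
by independence.

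The key step is to control each factor $\mathbb{E}[e^{\lambda X_k}]$. Using the Taylor expansion of the exponential together with $\mathbb{E}[X_k]=0$, $|X_k|\leq K$, and hence $\mathbb{E}[|X_k|^m] \leq K^{m-2}\mathbb{E}[X_k^2]$ for $m\geq 2$, I would write
\begin{equation*}
\mathbb{E}\bigl[e^{\lambda X_k}\bigr] \;=\; 1 + \sum_{m=2}^\infty \frac{\lambda^m \mathbb{E}[X_k^m]}{m!} \;\leq\; 1 + \mathbb{E}[X_k^2]\sum_{m=2}^\infty \frac{\lambda^m K^{m-2}}{m!}.
\end{equation*}
Using the elementary bound $\sum_{m\geq 2} u^m/m! \leq u^2/(2(1-u/3))$ valid for $0\leq u < 3$ (which comes from comparing $m!$ with $2\cdot 3^{m-2}$ for $m\geq 2$), this yields, provided $\lambda K < 3$,
\begin{equation*}
\mathbb{E}\bigl[e^{\lambda X_k}\bigr] \;\leq\; 1 + \frac{\lambda^2 \mathbb{E}[X_k^2]/2}{1 - \lambda K/3} \;\leq\; \exp\!\left(\frac{\lambda^2 \mathbb{E}[X_k^2]/2}{1 - \lambda K/3}\right),
\end{equation*}
where the last inequality uses $1+u\leq e^u$.

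Taking products over $k$ and substituting into the Chernoff bound,
\begin{equation*}
\mathbb{P}\Bigl(\sum_{k=1}^n X_k \geq t\Bigr) \;\leq\; \exp\!\left(-\lambda t + \frac{\lambda^2 \sigma^2/2}{1-\lambda K/3}\right).
\end{equation*}
The final step is to optimize over $\lambda\in(0,3/K)$; the choice $\lambda = t/(\sigma^2 + Kt/3)$ (which indeed lies in the valid range) makes the exponent equal to $-t^2/(2(\sigma^2 + Kt/3))$. Symmetrizing via the lower tail and summing the two bounds gives the factor of $2$ in the stated inequality. The only delicate point is the exponential bound on the series tail $\sum_{m\geq 2}u^m/m!$; everything else is routine calculus. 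Since Bernstein's inequality is a classical result (here just cited from \cite{Versh18}), I would simply invoke the reference rather than writing this proof out in detail in the paper.
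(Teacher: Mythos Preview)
Your sketch is a correct and standard derivation of Bernstein's inequality via the Cram\'er--Chernoff method; the series estimate $m!\geq 2\cdot 3^{m-2}$ and the choice $\lambda=t/(\sigma^2+Kt/3)$ are exactly what is needed. The paper itself gives no proof at all and simply cites \cite[Theorem~2.8.4]{Versh18}, which is precisely what you suggest doing in your final sentence.
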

\begin{lemma}\label{lemma:remainingSmall}
Under the assumptions of \Cref{thm:rtop2} or \Cref{thm:rtop2'} and with $ A^{\mathsf{sm}}$ defined as in \eqref{a-split},   \whp for all $i\in[n]$ we have 
\begin{align}
\begin{split}
    \left|\sum_{i=1}^n a^{\mathsf{sm}}_{ij}\right|\leq \sum_{j=1}^n| a^{\mathsf{sm}}_{ij}|=O_\delta\left(n^{\max\left\{\frac{1+\eta}{\alpha},1\right\}}\right).
\end{split}
\end{align}
\end{lemma}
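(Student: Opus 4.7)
The first inequality $\big|\sum_j a^{\mathsf{sm}}_{ij}\big| \le \sum_j |a^{\mathsf{sm}}_{ij}|$ is the triangle inequality, so I focus on the second. Fix $i\in[n]$, set $X_j := |a^{\mathsf{sm}}_{ij}|$ and $K := n^{(1+\eta)/\alpha}$. Because $A_n$ is symmetric with i.i.d.\ upper-triangular entries, the $n$ variables $\{a_{ij}\}_{j=1}^n$ in row $i$ are i.i.d.\ copies of the base heavy-tailed distribution, so $\{X_j\}_{j=1}^n$ are i.i.d.\ nonnegative variables uniformly bounded by $K$. The strategy is to compute the first two moments of $X_1$ via Karamata-style integration by parts against the tail $\bar F(x) = x^{-\alpha} L(x)$, apply Bernstein's inequality (\Cref{bers-ineq}) to control the deviation from the mean, and then union-bound over $i \in [n]$.

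Integration by parts and \Cref{l:slowly-varying-property} (which lets $L$ be absorbed into any $n^{\delta}$ factor) yield, writing $K^{\alpha} = n^{1+\eta}$,
\begin{align*}
\E[X_1] \;=\; \int_0^K \bar F(x)\,dx - K\bar F(K) \;=\;
\begin{cases} O_\delta\bigl(K^{1-\alpha}\bigr), & \alpha \le 1,\\ O(1), & \alpha>1, \end{cases}
\qquad \E[X_1^2] \;=\; O_\delta\bigl(K^{2-\alpha}\bigr).
\end{align*}
Consequently $n\E[X_1] = O_\delta(K \cdot n^{-\eta}) \vee O(n) = O_\delta\bigl(n^{\max\{(1+\eta)/\alpha,\,1\}}\bigr)$, and the summed variance satisfies $n\Var(X_1) \le n\E[X_1^2] = O_\delta\bigl(K^2 n^{-\eta}\bigr) \wedge O_\delta(n^{2-\eta'})$ for a suitable $\eta'>0$ in the two regimes $\alpha \le 1+\eta$ and $\alpha > 1+\eta$, respectively.

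With the target scale $T := n^{\max\{(1+\eta)/\alpha,\,1\}}$, choose the deviation $t := n^{\delta/2} T$ in \Cref{bers-ineq} applied to the centered variables $X_j - \E X_j$. One checks directly from the estimates above that both ratios $t^2/(n\Var(X_1))$ and $t/K$ are lower bounded by positive powers of $n$, so the exponent $\tfrac{t^2/2}{n\Var(X_1) + Kt/3}$ is at least $n^{\Omega(1)}$, giving an individual-row failure probability of $\exp(-n^{\Omega(1)})$. Summing this bound over $i \in [n]$ and combining $\sum_j X_j \le n\E[X_1] + t \le n^{\delta} T$ yields the desired whp estimate.

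The only subtle point, and what I would expect to be the main obstacle, is the case bookkeeping across $\alpha < 1$, $\alpha = 1$, $\alpha > 1$, and $\alpha \gtrless 1+\eta$, together with keeping track of the slowly varying function $L$; the specific choice of truncation level $K = n^{(1+\eta)/\alpha}$ is what ensures that both Bernstein ratios $t/K$ and $t^2/(n\Var(X_1))$ come out as strictly positive powers of $n$ (with the $n^\eta$ slack supplying room), and \Cref{l:slowly-varying-property} absorbs all logarithmic/slowly varying corrections into the $O_\delta$ slack.
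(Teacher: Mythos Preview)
Your proposal is correct and follows essentially the same route as the paper: compute $\E|a^{\mathsf{sm}}_{ij}|$ and $\E(a^{\mathsf{sm}}_{ij})^2$ by integrating the tail $\bar F$, split into the cases $\alpha<1$, $\alpha=1$, $\alpha>1$ for the mean, apply Bernstein's inequality (\Cref{bers-ineq}) to the centered row sums, and union-bound over $i\in[n]$. The only cosmetic difference is that the paper fixes the deviation at $t=n^{(1+\eta)/\alpha+\delta}$ uniformly, whereas you take $t=n^{\delta/2}T$ with $T=n^{\max\{(1+\eta)/\alpha,1\}}$; both choices make the Bernstein exponent a positive power of $n$.
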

\begin{proof}
First, we calculate the second moment of $ a^{\mathsf{sm}}_{ij}$ using $\bar F$ from \eqref{ccdf} and \eqref{eqn:slowly-varying-property} to obtain
\begin{align}
\begin{split}
    \E\left[ (a^{\mathsf{sm}}_{ij})^2\right]\leq \int_{0}^{n^{2(1+\eta)/\alpha}}\left(\bar F\left(x^{\frac{1}{2}}\right)-\bar F\left(n^{\frac{1+\eta}{\alpha}}\right)\right)dx=O_\delta\left(n^{\left(\frac{2}{\alpha}-1\right)\left(1+\eta\right)}\right).
\end{split}
\end{align}
This implies that
\begin{align}\label{var-calc}
\begin{split}
    \operatorname{Var}\left(\sum_{j=1}^n| a^{\mathsf{sm}}_{ij}|\right)\leq n\E\left[ (a^{\mathsf{sm}}_{11})^2\right]=O_\delta\left(n^{\frac{2}{\alpha}+\eta\left(\frac{2}{\alpha}-1\right)}\right).
\end{split}
\end{align}
We now consider three cases. First, suppose $\alpha<1$. Then
\begin{align}\label{case1}
\begin{split}
    \E\left[\sum_{j=1}^n| a^{\mathsf{sm}}_{ij}|\right]=n\int_{0}^{n^{(1+\eta)/\alpha}}\left(\bar F(x)-\bar F\left(n^{\frac{1+\eta}{\alpha}}\right)\right)dx=O_\delta\left(n^{\frac{1}{\alpha}+\eta\left(\frac{1}{\alpha}-1\right)}\right),
\end{split}
\end{align}
where we again used \eqref{eqn:slowly-varying-property} in the last equality.
Next suppose $\alpha=1$. Then
similarly, we have
\begin{align}\label{case2}
\begin{split}
    \E\left[\sum_{j=1}^n| a^{\mathsf{sm}}_{ij}|\right]=n\int_{0}^{n^{(1+\eta)/\alpha}}\left(\bar F(x)-\bar F\left(n^{\frac{1+\eta}{\alpha}}\right)\right)dx=O_\delta(n\log n)=O_\delta\left(n\right).
\end{split}
\end{align}
Finally, in the case $\alpha\in\left(1,\min\{2,\frac{rp}{r-p}\}\right)$, we have  $\E\left[\left|a_{11}\right|\right]<\infty$, which implies 
\begin{align}\label{case3}
\begin{split}
    \E\left[\sum_{j=1}^n| a^{\mathsf{sm}}_{ij}|\right]\leq n\E\left[\left|a_{11}\right|\right] =O_\delta\left(n\right).
\end{split}
\end{align}
Since $\left| a^{\mathsf{sm}}_{ij}\right|\leq n^{\frac{1+\eta}{\alpha}}$ by \eqref{a-split}, \Cref{bers-ineq} and \eqref{var-calc} together imply that for any $\delta>0$, 
\begin{align}\label{bern}
\begin{split}
    &\P\left(\left|\sum_{j=1}^n| a^{\mathsf{sm}}_{ij}|-\E\left[\sum_{j=1}^n| a^{\mathsf{sm}}_{ij}|\right]\right|\geq n^{\frac{1+\eta}{\alpha}+\delta}\right)\\
    &\leq 2\exp\left(-\frac{cn^{\frac{2(1+\eta)}{\alpha}+2\delta}}{\Var\left(\sum_{j=1}^n| a^{\mathsf{sm}}_{ij}|\right)+n^{\frac{1+\eta}{\alpha}}n^{\frac{1+\eta}{\alpha}+\delta}}\right)\\
    &=2\exp\left(-\frac{cn^\delta}{O_\delta\left(n^{-\eta}\right)+1}\right)\\
    &\leq 2\exp\left(-cn^{\delta}\right).
\end{split}
\end{align}
To complete the proof, combine the last four displays with the union bound.
\end{proof}
We now establish three additional lemmas that hold under the assumptions of \Cref{thm:rtop2'}. The first one bounds the mean of $ a^{\mathsf{sm}}_{ij}$ as defined in \eqref{a-split}, the second lemma concerns the spectral norm of $ A^{\mathsf{sm}}$ and the third one relates the spectral norm to the $r\rightarrow p$ operator norm.
\begin{lemma}\label{l:near-centered}
Fix $1\leq p<r\leq \infty$ and suppose \Cref{assume} holds with index $\alpha$ and \eqref{rtop2-cond2}.
Then, for $i,j\in[n]$, the matrix $ A^{\mathsf{sm}}$ defined in 
\eqref{a-split} satisfies 
\begin{align}
\begin{split}
    \left|\mathbb E\left[ a^{\mathsf{sm}}_{ij}\right]\right|=O\left(n^{\frac{1}{\alpha}-1}\right).
\end{split}
\end{align}
\end{lemma}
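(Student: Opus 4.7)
The plan is to use the centering hypothesis together with the fact that $\alpha > 1$ in this regime (so $\mathbb E|a_{ij}| < \infty$) to identify $\mathbb E[a^{\mathsf{sm}}_{ij}]$ with the opposite of the truncated tail expectation, and then to estimate that tail expectation by direct integration against the CCDF $\bar F_\alpha$.

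First I would observe that the regime $\alpha_*(\gamma) \leq \alpha < \bar\alpha_{*,1}(\gamma) = \min\{2, 1/\gamma\}$ with $\gamma = 1/p - 1/r \in (0,1]$ forces $\alpha \geq 2/(1+\gamma) > 1$ (with equality only in the degenerate boundary case $\gamma = 1$), so that $\mathbb E|a_{ij}|<\infty$ and the centering assumption in \eqref{rtop2-cond2} makes sense. Consequently,
\begin{align*}
\mathbb E[a^{\mathsf{sm}}_{ij}] \;=\; \mathbb E\!\left[a_{ij} \bbm 1\!\left(|a_{ij}| \leq n^{(1+\eta)/\alpha}\right)\right] \;=\; -\,\mathbb E\!\left[a_{ij} \bbm 1\!\left(|a_{ij}| > n^{(1+\eta)/\alpha}\right)\right],
\end{align*}
and therefore, writing $T = T_n \coloneqq n^{(1+\eta)/\alpha}$,
\begin{align*}
\bigl|\mathbb E[a^{\mathsf{sm}}_{ij}]\bigr| \;\leq\; \mathbb E\!\left[|a_{ij}|\bbm 1\!\left(|a_{ij}|>T\right)\right].
\end{align*}

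Next I would evaluate the right-hand side using the standard identity $\mathbb E[|X|\bbm 1(|X|>T)] = T\bar F(T) + \int_T^\infty \bar F(t)\,dt$. Plugging in $\bar F_\alpha(t) = t^{-\alpha}L(t)$ and applying Karamata's theorem (valid because $\alpha > 1$), both terms are of order $T^{1-\alpha}L(T)$, so
\begin{align*}
\bigl|\mathbb E[a^{\mathsf{sm}}_{ij}]\bigr| \;=\; O\!\left(T^{1-\alpha}L(T)\right) \;=\; O\!\left(n^{(1+\eta)(1-\alpha)/\alpha} L\bigl(n^{(1+\eta)/\alpha}\bigr)\right).
\end{align*}

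Finally, I would conclude using the strict gap created by $\eta > 0$. Writing $(1+\eta)(1-\alpha)/\alpha = (1/\alpha - 1) - \eta(\alpha-1)/\alpha$, the second term is strictly negative because $\alpha > 1$ and $\eta > 0$. By \Cref{l:slowly-varying-property}, $L(n^{(1+\eta)/\alpha}) = O_\delta(n^\delta)$ for every $\delta > 0$; choosing $\delta < \eta(\alpha-1)/\alpha$ the slowly varying factor is absorbed and we obtain $|\mathbb E[a^{\mathsf{sm}}_{ij}]| = O(n^{1/\alpha - 1})$, as required. The only genuine subtlety is verifying that the buffer supplied by $\eta$ dominates the possible (slowly-varying) growth of $L$; this is straightforward once the strict inequalities $\alpha > 1$ and $\eta > 0$ are isolated at the outset.
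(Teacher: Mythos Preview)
Your proposal is correct and follows essentially the same route as the paper: both arguments use centering to reduce $|\E[a^{\mathsf{sm}}_{ij}]|$ to the tail quantity $T\bar F(T)+\int_T^\infty \bar F(x)\,dx$ with $T=n^{(1+\eta)/\alpha}$, then use the regular-variation structure (you via Karamata, the paper by direct estimation) to obtain $O_\delta\bigl(n^{(1+\eta)(1-\alpha)/\alpha}\bigr)$, and finally absorb the slowly varying factor into the $\eta(\alpha-1)/\alpha>0$ gap to reach $O(n^{1/\alpha-1})$. The only cosmetic difference is that you immediately write $\E[a^{\mathsf{sm}}_{ij}]=-\E[a_{ij}\bbm 1(|a_{ij}|>T)]$, whereas the paper splits into positive and negative tails before recombining; the content is identical.
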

\begin{proof} Fix $i,j\in[n]$.
By the mean zero condition on $a_{ij}$, we have
\begin{align}
\begin{split}
    \int_0^\infty \P\left(a_{ij}>x\right)dx = \int_0^\infty \P\left(a_{ij}<-x\right)dx\eqqcolon K_0.
\end{split}
\end{align}
By the definition of $ a^{\mathsf{sm}}_{ij}$ in \eqref{a-split}, it follows that
\begin{align}
\begin{split}
    \E\left[ a^{\mathsf{sm}}_{ij}\right]=&\int_{0}^{n^{\frac{1+\eta}{\alpha}}}\P\left(x<a_{ij}\leq n^{\frac{1+\eta}{\alpha}}\right)dx - \int_{0}^{n^{\frac{1+\eta}{\alpha}}}\P\left(-n^{\frac{1+\eta}{\alpha}}\leq a_{ij}<-x\right)dx\\
    =&\left(K_0-\int_{n^{\frac{1+\eta}{\alpha}}}^\infty\P\left(a_{ij}>x\right)dx-n^{\frac{1+\eta}{\alpha}}\P\left(a_{ij}>n^{\frac{1+\eta}{\alpha}}\right)\right)-\\
    &\left(K_0-\int_{n^{\frac{1+\eta}{\alpha}}}^\infty\P\left(a_{ij}<-x\right)dx-n^{\frac{1+\eta}{\alpha}}\P\left(a_{ij}<-n^{\frac{1+\eta}{\alpha}}\right)\right)\\
    =& \int_{n^{\frac{1+\eta}{\alpha}}}^\infty\P\left(a_{ij}<-x\right)dx-\int_{n^{\frac{1+\eta}{\alpha}}}^\infty\P\left(a_{ij}>x\right)dx\\
    &+n^{\frac{1+\eta}{\alpha}}\P\left(a_{ij}<-n^{\frac{1+\eta}{\alpha}}\right)-n^{\frac{1+\eta}{\alpha}}\P\left(a_{ij}>n^{\frac{1+\eta}{\alpha}}\right).
\end{split}
\end{align}
This implies that
\begin{align}
\begin{split}
    \left|\E\left[ a^{\mathsf{sm}}_{ij}\right] \right|\leq \int_{n^{\frac{1+\eta}{\alpha}}}^\infty \P\left(|a_{ij}|>x\right)dx + n^{\frac{1+\eta}{\alpha}}\P\left(|a_{ij}|>n^{\frac{1+\eta}{\alpha}}\right)=O_\delta\left(n^{\frac{1}{\alpha}-1+\frac{\eta(1-\alpha)}{\alpha}}\right)=O\left(n^{\frac{1}{\alpha}-1}\right),
\end{split}
\end{align}
where we used \eqref{ccdf} in the first equality and $\alpha>1$ from \eqref{rtop2-cond2} in the last equality.
\end{proof}
\begin{lemma}\label{l:path-counting}
Fix $1\leq p<r\leq\infty$ and suppose \Cref{assume} is satisfied  with index $\alpha$ and \eqref{rtop2-cond2} also holds. Let $b_n$ and $ A^{\mathsf{sm}}$ be as defined in \eqref{d:b} and \eqref{a-split}, respectively. Then \whp,
\begin{align}\label{goal2}
\begin{split}
    \| A^{\mathsf{sm}}\|_{2\rightarrow 2}=O_\delta(n^{\frac{1+\eta}{\alpha}}).
\end{split}
\end{align}
\end{lemma}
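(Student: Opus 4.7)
The plan is a two-step reduction to a spectral-norm estimate for a bounded, mean-zero random matrix. First, set $\mu' \coloneqq \E[a^{\mathsf{sm}}_{11}]$; since all entries of $A^{\mathsf{sm}}$ (diagonal entries included) are identically distributed, $\E[A^{\mathsf{sm}}] = \mu'\,\bm 1\bm 1^\top$, and by \Cref{l:near-centered} we have $|\mu'| = O(n^{1/\alpha-1})$. Writing $A^{\mathsf{sm}} = \tilde A + \mu'\bm 1\bm 1^\top$, the rank-one mean part contributes spectral norm $|\mu'|\,n = O(n^{1/\alpha})$, which is $o(n^{(1+\eta)/\alpha})$ since $\eta > 0$; hence it suffices to bound $\|\tilde A\|_{2\to 2}$. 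Here $\tilde a_{ij} \coloneqq a^{\mathsf{sm}}_{ij} - \mu'$ is mean zero, jointly independent modulo symmetry, bounded in absolute value by $B \coloneqq 3\,n^{(1+\eta)/\alpha}$, and has variance $\sigma^2 = O_\delta\bigl(n^{(2/\alpha-1)(1+\eta)}\bigr)$, the variance estimate being precisely the one already recorded inside the proof of \Cref{lemma:remainingSmall}.

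To bound $\|\tilde A\|_{2\to 2}$ I would apply the classical trace (moment) method: for an even integer $2k$ to be chosen later,
\begin{equation*}
\E\!\left[\|\tilde A\|_{2\to 2}^{2k}\right] \leq \E\!\left[\operatorname{Tr}\!\bigl(\tilde A^{2k}\bigr)\right] = \sum_{i_0,\ldots,i_{2k-1}\in[n]} \E\!\left[\prod_{s=0}^{2k-1} \tilde a_{i_s i_{s+1}}\right],
\end{equation*}
with indices read modulo $2k$. The mean-zeroness of the entries together with their independence modulo symmetry force every surviving term to correspond to a closed walk in which each undirected edge is traversed at least twice. Grouping such walks by the number $\ell$ of distinct edges used (so the walk visits at most $\ell+1$ vertices) and bounding $\E[|\tilde a|^m] \leq \sigma^2 B^{m-2}$ for each $m \geq 2$, one arrives at the Wigner-type estimate
\begin{equation*}
\E\!\left[\operatorname{Tr}\!\bigl(\tilde A^{2k}\bigr)\right] \leq \sum_{\ell=1}^{k} C(k,\ell)\,n^{\ell+1}\,\sigma^{2\ell}\,B^{2k-2\ell},
\end{equation*}
where $C(k,\ell)$ is subexponential in $k$. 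Taking $k = \lceil\log n\rceil$, applying Markov's inequality, and using that $\sigma\sqrt{n} = O_\delta\bigl(n^{(1+\eta)/\alpha - \eta/2}\bigr) \ll B$, one obtains $\|\tilde A\|_{2\to 2} \leq n^\delta\, B = O_\delta\bigl(n^{(1+\eta)/\alpha}\bigr)$ \whp, which is the desired bound \eqref{goal2}.

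The main technical point is verifying that the $(2k)$-th root of $\sum_{\ell} C(k,\ell)\,n^{\ell+1}\sigma^{2\ell}B^{2k-2\ell}$ is dominated by $n^\delta \max\{\sigma\sqrt{n},\,B\}$ for $k = \lceil\log n\rceil$; this is the standard verification, analogous to the classical proof of Wigner's semicircle law with bounded entries, that the dominant contributions come from tree-like walks using each edge exactly twice and from the $\ell$ at the boundary of the sum. An alternative route that completely bypasses the combinatorial accounting would be to invoke a matrix Bernstein or Bandeira--van Handel bound on the spectral norm of a symmetric random matrix with independent bounded entries, which delivers the same $\sigma\sqrt{n}+B\sqrt{\log n}$ scaling and hence the same conclusion.
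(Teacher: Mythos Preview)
Your proposal is correct and shares the same first step as the paper: subtract the mean $\mu'\bm 1\bm 1^\top$, invoke \Cref{l:near-centered} to see that this rank-one piece is $O(n^{1/\alpha})$, and reduce to a centered bounded matrix $\tilde A$. From there the routes diverge. You run the trace method directly with $k=\lceil\log n\rceil$ and Markov's inequality, which is self-contained but requires carrying out the combinatorial walk-counting. The paper instead performs an additional symmetrization, then invokes the Bandeira--van Handel bound \cite[Corollary~3.6]{bandeira2016sharp} to control $\E\|\tilde A\|_{2\to2}$, and finishes with Talagrand's convex-Lipschitz concentration to upgrade the expectation bound to a \whp\ statement. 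In fact the ``alternative route'' you sketch at the end---matrix Bernstein / Bandeira--van Handel giving the $\sigma\sqrt n+B\sqrt{\log n}$ scaling---is precisely what the paper does. Your direct moment approach is more elementary and avoids the symmetrization and the separate concentration step; the paper's approach is shorter because it outsources both the combinatorics and the concentration to citable results.
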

\begin{proof}
Let $\mu\coloneqq\E\left[ a^{\mathsf{sm}}_{11}\right]$. Using \Cref{l:near-centered} in the last step, we have
\begin{align}
\begin{split}
    \left|\| A^{\mathsf{sm}}\|_{2\rightarrow 2}-\| A^{\mathsf{sm}}-\mu\bm 1\bm 1^\T\|_{2\rightarrow 2}\right|\leq \|\mu\bm 1\bm 1^\T\|_{2\rightarrow 2}=\mu n=O\left(n^{\frac{1}{\alpha}}\right).
\end{split}
\end{align}
Hence, to show \eqref{goal2}, we may assume without loss of generality that $\{ a^{\mathsf{sm}}_{ij}:\,i,j\in[n]\}$ are centered. 
Furthermore, by a standard symmetrization argument (see \cite[(2.60)]{tao2023topics}), in the rest of the proof we may assume $ a^{\mathsf{sm}}_{ij}\stackrel{d}{=}- a^{\mathsf{sm}}_{ij}$.

We first show that
\begin{align}\label{fir-claim}
\begin{split}
    \E\left[\| A^{\mathsf{sm}}\|_{2\rightarrow 2}\right]=O_\delta\left(n^{\frac{1+\eta}{\alpha}}\right).
\end{split}
\end{align}
To prove \eqref{fir-claim}, define $\sigma_{ij}\coloneqq\left(\E\left[ (a^{\mathsf{sm}}_{ij})^2\right]\right)^{1/2}$ and $\xi_{ij}\coloneqq\left(\E\left[ (a^{\mathsf{sm}}_{ij})^2\right]\right)^{-1/2} a^{\mathsf{sm}}_{ij}$, and $\sigma\coloneqq \max_{i\in[n]}\left(\sum_{j\in[n]}\sigma_{ij}^2\right)^{1/2}$. Then it is easy to verify that
\begin{align}
\begin{split}
    \sigma=\left(n\E\left[ (a^{\mathsf{sm}}_{11})^2\right]\right)^{\frac{1}{2}}&=O_\delta\left(n^{\frac{1+\eta}{\alpha}}\right),\\
    \max_{i,j}\E\left[\left|\sigma_{ij}\xi_{ij}\right|^{2\lceil 3\log n \rceil}\right]^{\frac{1}{2\lceil 3\log n \rceil}}&=O_\delta\left(n^{\frac{1+\eta}{\alpha}}\right).
\end{split}
\end{align}
Then \cite[Corollary~3.6]{bandeira2016sharp}, together with the assumption $ a^{\mathsf{sm}}_{ij}\stackrel{d}{=}- a^{\mathsf{sm}}_{ij}$, yields the bound
\begin{align}
\begin{split}
    \E\left[\| A^{\mathsf{sm}}\|_{2\rightarrow 2}\right]\leq e^{2/3}\left[2\sigma+52\max_{i,j}\E\left[\left|\sigma_{ij}\xi_{ij}\right|^{2\lceil 3\log n \rceil}\right]^{\frac{1}{2\lceil 3\log n \rceil}}\sqrt{\log n}\right]=O_{\delta}\left(n^{\frac{1+\eta}{\alpha}}\right).
\end{split}
\end{align}
This proves the claim \eqref{fir-claim}.

Next, define the map $F:\mathbb R^{(n+1)n/2}\rightarrow \mathbb R$ as follows: given $\bm x=\left(x_{ij}\right)_{1\leq i\leq j\leq n}$, $F(\bm x)$ is the $2\rightarrow 2$ norm of the $n\times n$ symmetric matrix with upper triangular part equal to $\left(x_{ij}\right)_{1\leq i\leq j\leq n}$. Since $F$ is convex with Lipschitz constant $1$ and $| a^{\mathsf{sm}}_{ij}|\leq n^{\frac{1+\eta}{\alpha}}$ by \eqref{a-split}, Talagrand's concentration inequality \cite[Theorem~2.1.13]{tao2023topics} implies that
\begin{align}
\begin{split}
    \mathbb{P}\left(\| A^{\mathsf{sm}}\|_{2\rightarrow 2}-\mathbb{E}\| A^{\mathsf{sm}}\|_{2\rightarrow 2} \geq \lambda n^{\frac{1+\eta}{\alpha}}\right) \leq C \exp \left(-c \lambda^2\right).
\end{split}
\end{align}
When combined with \eqref{fir-claim}, this completes the proof of the lemma.
\end{proof}
\begin{lemma}\label{l:later-claim}
Fix $1\leq p<r\leq\infty$ and 
suppose \Cref{assume} is satisfied  with index $\alpha$ and \eqref{rtop2-cond2} also holds.  
Let $b_n$ and $ A^{\mathsf{sm}}$ be as defined in \eqref{d:b} and \eqref{a-split}, respectively. Then \whp,
\begin{align}\label{later-claim}
\begin{split}
    \| A^{\mathsf{sm}}\|_{r\rightarrow p}=O_\delta\left(n^{\frac{1}{\min\{2,p\}}-\frac{1}{\max\{2,r\}}+\frac{1+\eta}{\alpha}}\right).
\end{split}
\end{align}
\end{lemma}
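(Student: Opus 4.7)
The plan is to reduce the $r\to p$ operator norm to the spectral $2\to 2$ norm via standard H\"older-type embeddings between $\ell^s$-spaces on $\mathbb{R}^n$, and then invoke \Cref{l:path-counting}.

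First, I would use the classical inequality comparing $\ell^r$ with $\ell^2$: for any $\bm x\in\mathbb R^n$, H\"older's inequality gives $\|\bm x\|_2\leq n^{\frac{1}{2}-\frac{1}{\max\{2,r\}}}\|\bm x\|_r$. (When $r\geq 2$ this is the standard bound $\|\bm x\|_2\leq n^{1/2-1/r}\|\bm x\|_r$; when $r<2$ the exponent is zero and the inequality $\|\bm x\|_2\leq \|\bm x\|_r$ is automatic.) Symmetrically, on the image side, $\|\bm y\|_p\leq n^{\frac{1}{\min\{2,p\}}-\frac{1}{2}}\|\bm y\|_2$ for every $\bm y\in\mathbb R^n$, with the exponent again being zero in the trivial case $p\geq 2$.

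Second, chaining these two estimates, for any $\bm x$ with $\|\bm x\|_r\leq 1$,
\begin{align*}
\| A^{\mathsf{sm}} \bm x\|_p \leq n^{\frac{1}{\min\{2,p\}}-\frac{1}{2}}\| A^{\mathsf{sm}}\bm x\|_2 \leq n^{\frac{1}{\min\{2,p\}}-\frac{1}{2}}\| A^{\mathsf{sm}}\|_{2\to 2}\|\bm x\|_2\leq n^{\frac{1}{\min\{2,p\}}-\frac{1}{\max\{2,r\}}}\| A^{\mathsf{sm}}\|_{2\to 2}.
\end{align*}
Taking the supremum over $\bm x$ with $\|\bm x\|_r\leq 1$ yields the deterministic bound
\begin{align*}
\| A^{\mathsf{sm}}\|_{r\to p}\leq n^{\frac{1}{\min\{2,p\}}-\frac{1}{\max\{2,r\}}}\| A^{\mathsf{sm}}\|_{2\to 2}.
\end{align*}

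Finally, I would invoke \Cref{l:path-counting}, which guarantees that whp $\| A^{\mathsf{sm}}\|_{2\to 2}=O_\delta\bigl(n^{(1+\eta)/\alpha}\bigr)$ under the hypotheses \eqref{rtop2-cond2}. Substituting this into the displayed inequality produces the claimed bound. There is essentially no obstacle here beyond assembling known pieces: the nontrivial probabilistic content has already been packaged into \Cref{l:path-counting}, and the present lemma is just the deterministic transfer from the spectral norm to the $r\to p$ norm via $\ell^s$-embedding constants on a finite-dimensional space.
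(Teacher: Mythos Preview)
Your proof is correct and follows essentially the same approach as the paper: reduce $\|A^{\mathsf{sm}}\|_{r\to p}$ to $\|A^{\mathsf{sm}}\|_{2\to 2}$ via H\"older embeddings between $\ell^s$-norms and then apply \Cref{l:path-counting}. The only difference is cosmetic---the paper splits into three cases ($2\le p<r$, $p<r\le 2$, $p<2<r$), while you handle them uniformly with the $\min\{2,p\}$ and $\max\{2,r\}$ notation, which is arguably cleaner.
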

\begin{proof}
We prove this by considering three cases:
\begin{enumerate}
\item Suppose $2\leq p<r\leq \infty$. First note that for any $\bm x\in\mathbb R^n$,
\begin{align}\label{fir-note}
\begin{split}
    \|\bm x\|_p =\sup_{\|\bm y\|_{p^*}\leq 1}\left\langle \bm x,\bm y\right\rangle\leq \sup_{\|\bm y\|_2\leq 1}\left\langle \bm x,\bm y\right\rangle=\|\bm x\|_2,
\end{split}
\end{align}
where we use $p^*\leq 2$ in the inequality. Using \eqref{fir-note}, H\"older's inequality and \Cref{l:path-counting}, we have \whp,
\begin{align}\label{case111}
\begin{split}
    \| A^{\mathsf{sm}}\|_{r\rightarrow p}=\sup_{\bm x\neq 0}\frac{\| A^{\mathsf{sm}}\bm x\|_p}{\|\bm x\|_r}\leq \sup_{\bm x\neq 0}\frac{\| A^{\mathsf{sm}}\bm x\|_{2}}{\|\bm x\|_r}\leq \sup_{\bm x\neq 0}\frac{\| A^{\mathsf{sm}}\bm x\|_2}{n^{\frac{1}{r}-\frac{1}{2}}\|\bm x\|_2}=n^{\frac{1}{2}-\frac{1}{r}}\| A^{\mathsf{sm}}\|_{2\rightarrow 2}=O_\delta(n^{\frac{1}{2}-\frac{1}{r}+\frac{1+\eta}{\alpha}}).
\end{split}
\end{align}
\item Suppose $1\leq p<r\leq 2$, notice that $\| A^{\mathsf{sm}}\|_{r\rightarrow p}=\| A^{\mathsf{sm}}\|_{p^*\rightarrow r^*}$ and $2\leq r^*<p^*\leq \infty$. The result in this case follows as in Case 1.
\item Finally, suppose $1\leq p<2<r\leq \infty$. Then H\"older's inequality and \Cref{l:path-counting} imply
\begin{align}
\begin{split}
    \| A^{\mathsf{sm}}\|_{r\rightarrow p}=\sup_{x\neq 0}\frac{\| A^{\mathsf{sm}}x\|_{p}}{\|x\|_r}\leq\sup_{x\neq 0} \frac{n^{\frac{1}{p}-\frac{1}{2}}\| A^{\mathsf{sm}}x\|_2}{n^{\frac{1}{r}-\frac{1}{2}}\|x\|_2}=n^{\frac{1}{p}-\frac{1}{r}}\| A^{\mathsf{sm}}\|_{2\rightarrow 2}=O_\delta(n^{\frac{1}{p}-\frac{1}{r}+\frac{1+\eta}{\alpha}}).
\end{split}
\end{align}
\end{enumerate}
Together, the three cases imply \eqref{later-claim}.
\end{proof}
\begin{proof}[Proof of \Cref{lem:omit-small}]
Suppose $1\leq p<r\leq\infty$ and \eqref{rtop2-cond1} holds.
Invoking first \Cref{lem:new-upper-bound}, then \Cref{lemma:remainingSmall} and finally using \eqref{rtop2-cond1} and \eqref{def:eta-para} for the last equality, we obtain
\begin{align}
\begin{split}
    \| A^{\mathsf{sm}}\|_{r\rightarrow p}\leq \left(\sum_{i\in[n]}\left(\sum_{j\in[n]}| a^{\mathsf{sm}}_{ij}|\right)^{\frac{rp}{r-p}}\right)^{\frac{r-p}{rp}}=O_{\delta}\left(n^{\max\left\{1,\frac{1+\eta }{\alpha}\right\}+\frac{r-p}{rp}}\right)=o(b_n).
\end{split}
\end{align}
This completes the proof under the assumptions of \Cref{thm:rtop2}.

The conclusion under the assumptions of \Cref{thm:rtop2'} follows on combining
\eqref{rtop2-cond2}, \eqref{def:eta-para} and \eqref{later-claim}.
\end{proof}

\subsection{Estimating terms of intermediate magnitude}\label{s:middle-bound}
Note from \eqref{a-split} that
\begin{align}\label{d:a-tilde}
\begin{split}
     a^{\mathsf{int}}_{ij}\coloneqq a_{ij}\bbm 1\left(n^{\frac{1+\eta}{\alpha}}<|a_{ij}|\leq n^{\frac{2-\zeta}{\alpha}}\right).
\end{split}
\end{align}
The goal of this section is to establish the following estimate.
\begin{proposition}\label{lem:omit-middle}
Recall $b_n$ in \eqref{d:b}. Under the assumptions of \Cref{thm:rtop2} or \Cref{thm:rtop2'}, \whp,
\begin{align}
\begin{split}
    \| A^{\mathsf{int}}\|_{r\rightarrow p}=o\left(b_n\right).
\end{split}
\end{align}
\end{proposition}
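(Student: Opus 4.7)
The strategy is to combine the row-wise sparsity of $A^{\mathsf{int}}$ with the row-sum bound of Lemma~\ref{lem:new-upper-bound}, sharpened via a power-mean inequality that converts row $L^1$ sums into entrywise $L^{1/\gamma}$ sums, and then to exploit the condition $\alpha<\alpha_*(\gamma)=2/(1+\gamma)$ through a heavy-tail moment estimate. Throughout, let $\gamma=1/p-1/r$, so $rp/(r-p)=1/\gamma$.

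\emph{Step 1 (row sparsity).} Set $p_n:=\bar F(n^{(1+\eta)/\alpha})=O_\delta(n^{-(1+\eta)})$ and $N_i:=\#\{j\in[n]:a^{\mathsf{int}}_{ij}\neq 0\}$. Since $N_i$ is stochastically dominated by $\mathrm{Bin}(n,p_n)$, one has $\P(N_i\geq k)\leq(np_n)^k=O_\delta(n^{-k\eta})$, so a union bound yields $\max_{i\in[n]}N_i\leq k_0$ \whp\ for a fixed integer $k_0=k_0(\eta)$ with $k_0\eta>1$.

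\emph{Step 2 (reduction to $L^{1/\gamma}$).} Lemma~\ref{lem:new-upper-bound} together with the power-mean inequality $(\sum_{j=1}^{k_0}x_j)^{1/\gamma}\leq k_0^{1/\gamma-1}\sum_j x_j^{1/\gamma}$ (valid since $1/\gamma\geq 1$) gives, on the event of Step~1,
\begin{equation*}
\|A^{\mathsf{int}}\|_{r\rightarrow p}\leq\left(\sum_{i\in[n]}\Bigl(\sum_{j\in[n]}|a^{\mathsf{int}}_{ij}|\Bigr)^{1/\gamma}\right)^{\gamma}\leq k_0^{1-\gamma}\left(\sum_{i,j\in[n]}|a^{\mathsf{int}}_{ij}|^{1/\gamma}\right)^{\gamma}.
\end{equation*}

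\emph{Step 3 (heavy-tail moment of order $1/\gamma$).} In both regimes \eqref{rtop2-cond1} and \eqref{rtop2-cond2} one verifies that $\alpha\gamma<1$, equivalently $1/\gamma>\alpha$: for \eqref{rtop2-cond1}, $\alpha\gamma<2\gamma/(1+\gamma)\leq 1$, and for \eqref{rtop2-cond2}, $\alpha<\bar\alpha_{*,1}(\gamma)=\min\{2,1/\gamma\}$. By integration by parts against $\bar F(x)=x^{-\alpha}L(x)$ over the truncation window $(n^{(1+\eta)/\alpha},n^{(2-\zeta)/\alpha}]$, the upper endpoint dominates (since the integrand has exponent $1/\gamma-1-\alpha>-1$) and one obtains
\begin{equation*}
\E\bigl[|a^{\mathsf{int}}_{11}|^{1/\gamma}\bigr]=O_\delta\!\left(n^{(2-\zeta)(1-\alpha\gamma)/(\alpha\gamma)}\right).
\end{equation*}

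\emph{Step 4 (Markov and conclusion).} From $\E[\sum_{i,j}|a^{\mathsf{int}}_{ij}|^{1/\gamma}]\leq n^2\,\E[|a^{\mathsf{int}}_{11}|^{1/\gamma}]$ and Markov's inequality, one gets $\sum_{i,j}|a^{\mathsf{int}}_{ij}|^{1/\gamma}=O_\delta(n^{[2-\zeta(1-\alpha\gamma)]/(\alpha\gamma)})$ \whp. Substituting into Step~2,
\begin{equation*}
\|A^{\mathsf{int}}\|_{r\rightarrow p}=O_\delta\!\left(n^{2/\alpha-\zeta(1-\alpha\gamma)/\alpha}\right)=o(b_n),
\end{equation*}
since the polynomial factor $n^{-\zeta(1-\alpha\gamma)/\alpha}$ beats the slowly varying $L_o(n)$ in $b_n=L_o(n)n^{2/\alpha}$.

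\emph{Main obstacle.} The delicate point is passing from row $L^1$ sums to entrywise $L^{1/\gamma}$ sums via the power-mean step. A direct bound $\sum_j|a^{\mathsf{int}}_{ij}|\leq k_0\,n^{(2-\zeta)/\alpha}$ fed into Lemma~\ref{lem:new-upper-bound} would only yield $\|A^{\mathsf{int}}\|_{r\rightarrow p}=O(n^{\gamma+(2-\zeta)/\alpha})$, which requires $\zeta>\alpha\gamma$ and fails over much of the parameter range (e.g.\ when $\alpha\gamma$ is close to $1$). The refined bound is precisely what allows the heavy-tail inequality $\alpha\gamma<1$—the very condition encoded in $\alpha_*(\gamma)=2/(1+\gamma)$—to make the moment estimate go through, tying the sharpness of the proof to the location of the phase transition.
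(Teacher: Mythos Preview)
Your proof is correct and follows essentially the same approach as the paper: both establish row sparsity of $A^{\mathsf{int}}$ via a union bound (the paper's Lemma~\ref{l:row-sparse}), feed this into Lemma~\ref{lem:new-upper-bound} with a power-mean step to reduce to the entrywise $L^{1/\gamma}$ sum, and then control that sum by a truncated-moment computation exploiting $\alpha\gamma<1$ (the paper packages this as Lemma~\ref{l:sum-small}). The only cosmetic difference is that you use a first-moment Markov bound where the paper uses a second-moment calculation, but both yield the same $O_\delta$ estimate.
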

The proof of this result, which is postponed to the end of this section, relies on two auxiliary results. The first one, \Cref{l:row-sparse}, establishes a  sparsity property of $ A^{\mathsf{int}}$.
\begin{lemma}\label{l:row-sparse}
Under the assumptions of \Cref{thm:rtop2} or \Cref{thm:rtop2'}, there exists $\kappa\equiv \kappa(\eta)\in\N$ independent of $n$ such that, \whp, there exist at most $\kappa$ nonzero entries in any row or column of $ A^{\mathsf{int}}$.
\end{lemma}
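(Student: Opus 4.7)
The plan is to reduce the statement to a counting bound on heavy-tailed indicator variables. By definition \eqref{d:a-tilde}, the number of nonzero entries in the $i$th row of $A^{\mathsf{int}}$ equals $X_i \coloneqq \#\{j\in[n]: n^{(1+\eta)/\alpha} < |a_{ij}| \leq n^{(2-\zeta)/\alpha}\}$. Since I only want an upper bound on the row count, I can discard the upper threshold and bound $X_i$ above by $\widetilde X_i \coloneqq \#\{j\in[n]:|a_{ij}|>n^{(1+\eta)/\alpha}\}$, which is a $\mathrm{Binomial}(n,p_n)$ random variable (fixing any $i$, in view of the independence of upper-triangular entries and the symmetry constraint, the bound will still be $\mathrm{Binomial}$-like). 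Here $p_n = \bar F_\alpha(n^{(1+\eta)/\alpha})$, which by \eqref{ccdf} and \Cref{l:slowly-varying-property} is $O_\delta(n^{-(1+\eta)})$ for any $\delta>0$.

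For any fixed positive integer $\kappa$, the elementary binomial tail estimate $\P(\widetilde X_i \geq \kappa) \leq \binom{n}{\kappa} p_n^\kappa$ combined with the bound on $p_n$ gives
\begin{equation*}
\P(\widetilde X_i \geq \kappa) \;\leq\; \frac{n^\kappa}{\kappa!}\, p_n^\kappa \;=\; O_\delta\!\left(n^{-\kappa\eta}\right).
\end{equation*}
A union bound over the $n$ rows yields
\begin{equation*}
\P\!\left(\max_{i\in[n]} \widetilde X_i \geq \kappa\right) \;=\; O_\delta\!\left(n^{1-\kappa\eta}\right),
\end{equation*}
which tends to zero provided $\kappa\eta > 1$ (with $\delta$ chosen small enough to absorb the slowly varying factor). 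Hence it suffices to fix $\kappa \equiv \kappa(\eta) \coloneqq \lceil 1/\eta\rceil + 1$, which depends only on $\eta$ (equivalently, on $\alpha, r, p$ via \eqref{def:eta-para}) and not on $n$. By the symmetry of $A$, the same argument with rows replaced by columns gives the column statement; a final union bound over rows and columns completes the proof.

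I do not anticipate a major obstacle: the argument is essentially a first-moment/union-bound calculation exploiting the fact that the cutoff $n^{(1+\eta)/\alpha}$ has been chosen strictly above the natural scale $n^{1/\alpha}$ at which individual entries become large. The one point requiring mild care is correctly tracking the slowly varying function $L$ through the threshold $p_n$; this is handled cleanly by \Cref{l:slowly-varying-property}, which allows any polynomially small slack $n^{\delta}$ to be absorbed into the $O_\delta$ notation while still leaving a net exponent $1 - \kappa\eta < 0$ after the union bound. The bound $\kappa > 1/\eta$ pins down the dependence of $\kappa$ on $\eta$ explicitly.
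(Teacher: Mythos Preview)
Your proposal is correct and follows essentially the same approach as the paper: both drop the upper threshold, bound the row count by a binomial-type tail $\binom{n}{\kappa}p_n^{\kappa}$ with $p_n=O_\delta(n^{-(1+\eta)})$, take a union bound over rows to get $O_\delta(n^{1-\kappa\eta})$, and choose $\kappa=\lceil 1/\eta\rceil+1$. Your treatment of the slowly varying factor via \Cref{l:slowly-varying-property} and the remark on symmetry are minor elaborations, but the argument is the same.
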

\begin{proof}
Define 
\begin{align}\label{d:kappa}
\begin{split}
    \kappa\equiv\kappa(\eta)\coloneqq \left\lceil \frac{1}{\eta}\right\rceil+1.
\end{split}
\end{align}
Fix $n\in\mathbb N$, define the event $E_n$ as follows:
\begin{align}
\begin{split}
    E_n\coloneqq \left\{\exists\,i\in[n]\text{ such that }\exists\,\kappa\text{ nonzero entries on row (or column) }i\text{ of }A\right\}.
\end{split}
\end{align}
By \eqref{ccdf}, \eqref{d:a-tilde} and \eqref{d:kappa}, it follows that
\begin{align}\label{est-kappa}
\begin{split}
    \P\left(E_n\right)\leq  \P\left(|a_{ij}|\geq n^{\frac{1+\eta}{\alpha}}\right)^\kappa \binom{n}{\kappa}n =O_{\delta}\left(n^{1-\kappa\eta}\right)=O_{\delta}(n^{-1}),
\end{split}
\end{align}
which converges to zero as $n\rightarrow\infty$.
This completes the proof of the lemma.
\end{proof}
\begin{remark}
As a simple consequence, with $\kappa$ as in \Cref{l:row-sparse}, for any $s,t> 0$, we have \whp, for all $i\in[n]$, 
\begin{align}
\begin{split}
    \left(\sum_{j\in[n]}| a^{\mathsf{int}}_{ij}|\right)^s\leq\kappa^s \max_{j\in[n]}| a^{\mathsf{int}}_{ij}|^s\leq  \kappa^s \sum_{j\in[n]}| a^{\mathsf{int}}_{ij}|^s. \label{sparse-consequence}
\end{split}
\end{align}
Using the rearrangement inequality \cite[Theorem~368]{hardy1952inequalities}, we also have
\begin{align}
\begin{split}
    \sum_{j\in[n]}\left| a^{\mathsf{int}}_{ij}\right|^s\sum_{k\in[n]}\left| a^{\mathsf{int}}_{ik}\right|^s\leq \kappa \sum_{j\in[n]}\left| a^{\mathsf{int}}_{ij}\right|^{s+t}.\label{sparse-consequence2}
\end{split}
\end{align}
\end{remark}
We now state the second auxiliary result, which exploits the heavy-tailed nature of $\left(a_{ij}\right)_{i,j\in[n]}$.
\begin{lemma}\label{l:sum-small}
Let $\left(X_\ell\right)_{\ell\in[n(n+1)/2]}$ be i.i.d. copies of a nonnegative heavy-tailed random variable with index $\alpha\in(0,2)$. Fix $q>\alpha$ and $\beta<\frac{2}{\alpha}$. Define $\tilde X_\ell\coloneqq X_\ell\bbm 1\left\{X_\ell\leq n^\beta\right\}$. Then \whp,
\begin{align}\label{goalgoal}
\begin{split}
    \sum_{\ell\in[n(n+1)/2]}\tilde X_\ell^q =O_\delta\left(n^{q\beta-\alpha\beta+2}\right).
\end{split}
\end{align}
\end{lemma}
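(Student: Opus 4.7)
The plan is to apply Bernstein's inequality (\Cref{bers-ineq}) to the centered, independent, uniformly bounded summands $\tilde X_\ell^q - \mathbb{E}[\tilde X_\ell^q]$, after first verifying that the expectation already has the claimed order. Using the layer-cake identity together with the tail formula \eqref{ccdf},
\begin{align*}
\mathbb{E}[\tilde X_\ell^q] = q\int_0^{n^\beta} u^{q-1}\bigl(\bar F(u) - \bar F(n^\beta)\bigr)\,du \leq q\int_0^{n^\beta} u^{q-1-\alpha}L(u)\,du.
\end{align*}
Since $q > \alpha$, the exponent $q-1-\alpha$ is strictly greater than $-1$, so the integrand is integrable at the origin, while \Cref{l:slowly-varying-property} ensures that the slowly varying factor $L$ contributes only a subpolynomial correction. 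This yields $\mathbb{E}[\tilde X_\ell^q] = O_\delta(n^{\beta(q-\alpha)})$, and summing over the $\tfrac{n(n+1)}{2}$ independent copies,
\begin{align*}
\mathbb{E}\Bigl[\sum_\ell \tilde X_\ell^q\Bigr] = O_\delta\bigl(n^{q\beta - \alpha\beta + 2}\bigr),
\end{align*}
already matching the target rate.

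An identical calculation with $q$ replaced by $2q$ gives $\mathbb{E}[\tilde X_\ell^{2q}] = O_\delta(n^{\beta(2q-\alpha)})$, so by independence $\operatorname{Var}\bigl(\sum_\ell \tilde X_\ell^q\bigr) \leq \tfrac{n(n+1)}{2}\mathbb{E}[\tilde X_1^{2q}] = O_\delta(n^{2q\beta - \alpha\beta + 2})$. Combined with the deterministic bound $\tilde X_\ell^q \leq n^{q\beta}$, I would invoke \Cref{bers-ineq} at threshold $t := n^{q\beta - \alpha\beta + 2 + \delta/2}$ with $K := n^{q\beta}$ and $\sigma^2 = O_\delta(n^{2q\beta - \alpha\beta + 2})$. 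Since the exponent of $Kt = n^{2q\beta - \alpha\beta + 2 + \delta/2}$ strictly exceeds that of $\sigma^2$, the denominator $\sigma^2 + Kt/3$ is controlled by $Kt$ up to constants, and so the Bernstein exponent $\tfrac{t^2/2}{\sigma^2 + Kt/3}$ is bounded below by a constant multiple of $t/K = n^{2-\alpha\beta + \delta/2}$. The hypothesis $\beta < 2/\alpha$ is precisely what guarantees $2 - \alpha\beta > 0$, so this exponent grows polynomially in $n$ and the deviation probability is super-polynomially small.

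Combining the concentration bound with the mean estimate yields $\sum_\ell \tilde X_\ell^q \leq \mathbb{E}\bigl[\sum_\ell \tilde X_\ell^q\bigr] + t = O_\delta(n^{q\beta - \alpha\beta + 2})$ \whp, which is \eqref{goalgoal}. The main obstacle is the $O_\delta$ book-keeping: one must carefully track the subpolynomial slack arising from the slowly varying factor $L$ in both the mean and variance estimates, and then verify that the strict inequality $\beta < 2/\alpha$ leaves enough room to absorb all $n^\delta$ losses while still keeping the Bernstein exponent growing. No deeper probabilistic input such as multi-scale truncation or decoupling is required, because the single truncation at level $n^\beta$ simultaneously renders each summand uniformly bounded and ensures finite second moments.
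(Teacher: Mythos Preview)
Your proof is correct. The paper also computes the first and second moments of $\tilde X_\ell^q$, obtaining the same bounds $\mathbb{E}[\tilde X_1^q] = O_\delta(n^{\beta(q-\alpha)})$ and $\mathbb{E}[\tilde X_1^{2q}] = O_\delta(n^{\beta(2q-\alpha)})$, but then proceeds more directly: it bounds $\mathbb{E}\bigl[(\sum_\ell \tilde X_\ell^q)^2\bigr] \leq n^2\,\mathbb{E}[\tilde X_1^{2q}] + n^4\,(\mathbb{E}[\tilde X_1^q])^2 = O_\delta(n^{2q\beta - 2\alpha\beta + 4})$ (the hypothesis $\beta < 2/\alpha$ is invoked precisely here, to ensure the diagonal term does not exceed the square of the mean) and then applies Markov's inequality to this second moment. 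Your Bernstein route is sound and delivers super-polynomially small deviation probabilities rather than the paper's polynomial decay, but that extra strength is not needed for the \whp\ $O_\delta$ conclusion, and the paper's argument sidesteps the $O_\delta$ book-keeping you flag in comparing $\sigma^2$ against $Kt$.
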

\begin{proof}
We prove \eqref{goalgoal} by directly calculating the second moment of the left-hand side of \eqref{goalgoal}. Indeed, note that by \Cref{d:heavy} and \Cref{l:slowly-varying-property}, it follows that
\begin{align}
\begin{split}
    &\E\left[\tilde X_1^{2q}\right]\leq O_\delta(1)\int_{0}^{n^{2q\beta}}x^{-\frac{\alpha}{2q}}dx=O_\delta\left(n^{2q\beta-\beta\alpha}\right),\\
    &\E\left[\tilde X_1^q\right]\leq O_\delta(1)\int_0^{n^{q\beta}}x^{-\frac{\alpha}{q}}dx=O_\delta\left(n^{q\beta-\beta\alpha}\right).
\end{split}
\end{align}
Therefore, by the assumption $\beta<\frac{2}{\alpha}$, it follows that
\begin{align}
\begin{split}
    \E\left[\left(\sum_{i\in[n(n+1)/2]}X_i^q\right)^2\right]\leq n^2\E\left[\tilde X_1^{2q}\right]+n^4\left(\E\left[\tilde X_1^q\right]\right)^2=O_\delta\left(n^{2q\beta-2\beta\alpha+4}\right).
\end{split}
\end{align}
Together with Markov's inequality, this proves \eqref{goalgoal}.
\end{proof}
\begin{proof}[Proof of \Cref{lem:omit-middle}]
Let $\kappa$ be as in \Cref{l:row-sparse}. By \Cref{lem:new-upper-bound} and \eqref{sparse-consequence}, it follows that, \whp, for each $i\in[n]$,
\begin{equation}
\begin{aligned}
    \| A^{\mathsf{int}}\|_{r\rightarrow p}\leq \left(\sum_{i\in[n]}\left(\sum_{j\in[n]}| a^{\mathsf{int}}_{ij}|\right)^{\frac{rp}{r-p}}\right)^{\frac{r-p}{rp}}\leq \kappa\left(\sum_{i,j\in[n]}| a^{\mathsf{int}}_{ij}|^{\frac{rp}{r-p}}\right)^{\frac{r-p}{rp}}.
\end{aligned}
\end{equation}
By \eqref{d:a-tilde}, we have
\begin{align}\label{seq2}
\begin{split}
    \sum_{i,j\in[n]}| a^{\mathsf{int}}_{ij}|^{\frac{rp}{r-p}}\leq \sum_{i,j\in[n]}|a_{ij}|^{\frac{rp}{r-p}}\bbm 1\left(|a_{ij}|\leq n^{\frac{2-\zeta}{\alpha}}\right)\leq 2\sum_{1\leq i\leq j\leq n}|a_{ij}|^{\frac{rp}{r-p}}\bbm 1\left(|a_{ij}|\leq n^{\frac{2-\zeta}{\alpha}}\right).
\end{split}
\end{align}
Under assumptions of \Cref{thm:rtop2} or \Cref{thm:rtop2'}, we have $\alpha<\frac{rp}{r-p}$. 
Applying \Cref{l:sum-small} with $\left\{X_\ell\right\}_{\ell\in[n(n+1)/2]}=\left\{|a_{ij}|\right\}_{1\leq i\leq j\leq n}$, $q=\frac{rp}{r-p}$ and $\beta=\frac{2-\zeta}{\alpha}$, and with $b_n$ as in \eqref{d:b}, we have \whp,
\begin{align}\label{seq3}
\begin{split}
    \sum_{1\leq i\leq j\leq n}|a_{ij}|^{\frac{rp}{r-p}}\bbm 1\left\{|a_{ij}|\leq n^{\frac{2-\zeta}{\alpha}}\right\}=O_\delta\left(n^{\frac{rp}{r-p}\frac{2}{\alpha}-\zeta\left(\frac{1}{\alpha}\frac{rp}{r-p}-1\right)}\right)=o\left(b_n^{\frac{rp}{r-p}}\right) .
\end{split}
\end{align}
Together, the last three displays prove the proposition.
\end{proof}

\subsection{Characterizing the fluctuation due to  large terms}\label{s:large-fluct}
We now turn our attention to the matrix $ A^{\mathsf{la}}$ in \eqref{a-split}, starting with the following lemma that counts the number of nonzero entries in $ A^{\mathsf{la}}$.
\begin{lemma}\label{lemma:num-large-entry}
Let $\left(X_\ell\right)_{\ell\in[n(n+1)/2]}$ be i.i.d. copies of a heavy-tailed random variable with index $\alpha\in(0,2)$. Fix any $\theta\in(0,1)$, and let $N_n=\#\left\{X_\ell:|X_\ell|> n^{\frac{2\theta}{\alpha}}\right\}$. Then \whp,
\begin{equation}
N_n=\Theta_\delta\left(n^{2-2\theta}\right).
\end{equation}
\end{lemma}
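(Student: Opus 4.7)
The plan is to observe that $N_n$ is simply a binomial random variable and then combine a direct computation of its mean with a standard concentration inequality.

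Let $p_n \coloneqq \mathbb{P}(|X_1| > n^{2\theta/\alpha}) = \bar F_\alpha(n^{2\theta/\alpha})$. Since the $X_\ell$ are i.i.d., $N_n \sim \mathrm{Binomial}(n(n+1)/2,\, p_n)$. Using the form of the tail in \eqref{ccdf}, we have
\begin{equation}
p_n = n^{-2\theta} L\bigl(n^{2\theta/\alpha}\bigr),
\end{equation}
and the slowly varying property \eqref{eqn:slowly-varying-property} from Lemma \ref{l:slowly-varying-property} yields $L(n^{2\theta/\alpha}) = \Theta_\delta(1)$. Consequently,
\begin{equation}
\mathbb{E}[N_n] \;=\; \tfrac{n(n+1)}{2}\, p_n \;=\; \Theta_\delta\bigl(n^{2-2\theta}\bigr).
\end{equation}
Because $\theta \in (0,1)$, we have $2-2\theta > 0$ and hence $\mathbb{E}[N_n] \to \infty$.

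Next I would invoke a concentration estimate to turn the mean bound into a high-probability statement. Writing $N_n = \sum_{\ell} \mathbbm{1}\{|X_\ell| > n^{2\theta/\alpha}\}$ as a sum of independent Bernoulli indicators, the standard Chernoff bound (or equivalently Bernstein's inequality, Lemma \ref{bers-ineq}, applied to the centered indicators with $K=1$ and variance proxy $\sigma^2 \le \mathbb{E}[N_n]$) gives
\begin{equation}
\mathbb{P}\bigl(|N_n - \mathbb{E}[N_n]| > \tfrac{1}{2}\mathbb{E}[N_n]\bigr) \;\le\; 2\exp\!\bigl(-c\, \mathbb{E}[N_n]\bigr)
\end{equation}
for some absolute constant $c>0$. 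Since $\mathbb{E}[N_n]$ grows polynomially in $n$, this probability tends to $0$, so whp $N_n$ lies in $[\tfrac{1}{2}\mathbb{E}[N_n], \tfrac{3}{2}\mathbb{E}[N_n]]$.

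Combining the two displays, we obtain whp
\begin{equation}
n^{-\delta} n^{2-2\theta} \;\le\; N_n \;\le\; n^{\delta} n^{2-2\theta}
\end{equation}
for every $\delta>0$, which is exactly $N_n = \Theta_\delta(n^{2-2\theta})$ as desired. There is no real obstacle here: the only subtlety is that $L(n^{2\theta/\alpha})$ is absorbed into the $n^{\pm\delta}$ slack by the $\Theta_\delta$ notation, and the binomial concentration is routine because the mean diverges polynomially.
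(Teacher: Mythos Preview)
Your proposal is correct and follows essentially the same route as the paper: recognize $N_n$ as a binomial, compute $\mathbb{E}[N_n]=\Theta_\delta(n^{2-2\theta})$ via the slowly varying tail, and apply a Chernoff-type bound to get concentration around the mean. The paper cites multiplicative Chernoff directly rather than routing through Bernstein, but the argument is otherwise the same.
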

\begin{proof}
Letting $\bar F$ denote complement cumulative distribution function of $X_\ell$, note that $N_n\sim \operatorname{Ber}\left(\frac{n(n+1)}{2},\bar{F}\left(n^{\frac{2\theta}{\alpha}}\right)\right)$. Thus, by \eqref{ccdf}, we have
\begin{equation}\label{eqn:upper-expectation-bound}
\begin{aligned}
\mathbb{E}\left[N_{n}\right] &=\frac{n(n+1)}{2} \cdot \bar{F}\left(n^{\frac{2\theta}{\alpha}}\right) \\
&=\frac{n(n+1)}{2} L\left(n^{\frac{2\theta}{\alpha}}\right)n^{-2\theta}\\
&=\Theta_\delta\left(n^{2-2 \theta}\right),
\end{aligned}
\end{equation}
where the last equality uses \eqref{eqn:slowly-varying-property}.
An application of Chernoff's bound \cite[Theorem~4.4 and Theorem~4.5]{MU17} then implies that
\begin{equation}
\begin{aligned}
\P\left(|N_n-\E[N_n] |>\frac{1}{2}\E[N_n]\right)&\leq2\exp\left(-\Theta_{\delta}\left(n^{2-2\theta}\right)\right).
\end{aligned}
\end{equation}
Together with \eqref{eqn:upper-expectation-bound}, the assumption $\theta\in(0,1)$ and the union bound, this proves the lemma.
\end{proof}
We now state the second lemma, which simplifies the structure of $ A^{\mathsf{la}}$ while preserving its $r\rightarrow p$ norm (and also the optimal value of \ref{opt1} which is going to be used in \Cref{ss:gro2}).
\begin{lemma}\label{modification}
Let $V$ be a symmetric $n\times n$ matrix that satisfies the following three properties:
\begin{enumerate}[(1),topsep=0pt,partopsep=0pt]
\item zero diagonal entries, that is, $V_{ii}=0,\,\forall i\in[n]$;
\item there is at most one non-zero entry in each row and each column;
\item $\exists\,m\in\N$ such that $m<\frac{n}{2}$ and $\#\left\{(i,j)\in[n]\times [n]:\,V_{ij}\neq 0\right\}=2m$.
\end{enumerate}
Then there exists a symmetric $n\times n$ matrix $\check V$ with $\left\{\check V_{ij}:\,i,j\in[n]\right\}=\left\{V_{ij}:\, i,j\in[n]\right\}$ that satisfies properties (1) $\sim$ (3) above and in addition, also satisfies three additional properties:
\begin{enumerate}[(1),start=4]
\item $\check V_{ij}=0$ if $\max\left\{i,j\right\} > 2m$;
\item For $k\in[m]$, $\check V_{2k-1,2k}=\check V_{2k,2k-1}$ has a magnitude that is $k$-th largest in $\left\{|V_{ij}|:\, 1\leq i<j\leq n\right\}$;
\item $\|\check V\|_{r\rightarrow p}=\|V\|_{r\rightarrow p}$ and $M_r(\check V)=M_r(V)$, where we recall that $M_r(V),M_r(\check V)$ are the optimal values of the corresponding $\ell_r$-Grothendieck problems defined in \ref{opt1}.
\end{enumerate}
Moreover, one can ensure that the map from $V$ to $\check V$ is measurable. 
\end{lemma}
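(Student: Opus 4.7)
The plan is to realize $\check V$ as a simultaneous row-and-column permutation of $V$, i.e.\ $\check V = P_\sigma V P_\sigma^\top$ for a suitable permutation $\sigma$ of $[n]$. The key structural observation is that the nonzero entries of $V$ form a perfect matching: by property (1) the diagonal is zero, by symmetry $V_{ij}\neq 0\iff V_{ji}\neq 0$, and by property (2) each index participates in at most one such pair. Property (3) then says this matching consists of exactly $m$ unordered pairs $\{i_1,j_1\},\ldots,\{i_m,j_m\}\subset [n]$.

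First I would order the matching edges by decreasing magnitude so that $|V_{i_1 j_1}|\geq |V_{i_2 j_2}|\geq\cdots\geq |V_{i_m j_m}|$, breaking ties in a fixed canonical way (e.g.\ lexicographically on the ordered pair $(\min\{i,j\},\max\{i,j\})$). This tie-breaking rule makes the choice of ordering a measurable function of the matrix entries. Next I would define a permutation $\sigma$ of $[n]$ by setting $\sigma(i_k)=2k-1$, $\sigma(j_k)=2k$ for each $k\in[m]$ (using the same tie-breaking rule to decide which endpoint of the pair is sent to the odd index), and then extending $\sigma$ arbitrarily but canonically (e.g.\ in increasing order) on the remaining indices of $[n]\setminus\bigcup_k\{i_k,j_k\}$ to $\{2m+1,\ldots,n\}$. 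The resulting $\check V\coloneqq P_\sigma V P_\sigma^\top$ inherits symmetry and zero diagonal from $V$, still has at most one nonzero per row/column, and by construction the nonzero entries sit exactly at positions $(2k-1,2k)$ and $(2k,2k-1)$ for $k\in[m]$ with magnitudes in the prescribed decreasing order, yielding properties (1)--(5).

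It remains to verify property (6). For any $\bm x\in\mathbb R^n$, writing $\bm y=P_\sigma^\top\bm x$, one has $\|\bm y\|_r=\|\bm x\|_r$ and $\|\bm y\|_{p^*}=\|\bm x\|_{p^*}$ since permutations preserve all $\ell_q$ norms, and $\check V\bm x = P_\sigma V\bm y$, so $\|\check V\bm x\|_p=\|V\bm y\|_p$. Taking suprema over $\|\bm x\|_r\leq 1$ (equivalently $\|\bm y\|_r\leq 1$) gives $\|\check V\|_{r\to p}=\|V\|_{r\to p}$. Similarly, $\bm x^\top\check V\bm x=\bm y^\top V\bm y$ with $\|\bm y\|_r=\|\bm x\|_r$, yielding $M_r(\check V)=M_r(V)$.

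There is no real obstacle here; the only mildly delicate point is ensuring the construction is measurable, which is handled by fixing a deterministic tie-breaking convention throughout (on the ordering of edge magnitudes, on which endpoint becomes the odd index, and on the extension of $\sigma$ to unmatched indices). Each step of the construction is then a measurable function of $V$, so the map $V\mapsto\check V$ is measurable as required.
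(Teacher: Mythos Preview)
Your proposal is correct and takes essentially the same approach as the paper: both realize $\check V$ as a simultaneous row-and-column permutation $P_\sigma V P_\sigma^\top$ of $V$, with property (6) following from the $\ell_q$-invariance of permutations. The only cosmetic difference is that the paper builds $\sigma$ iteratively as a product of transpositions (swapping the $k$-th largest remaining entry into position $(2k-1,2k)$ one step at a time), whereas you define the full permutation in one shot; your presentation is arguably cleaner, and your explicit verification of property (6) is more detailed than the paper's.
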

\begin{remark}
The output matrix $\check V$ of \Cref{modification} is illustrated in \Cref{fig:config-1}, where blue region represents region of zero entries and the red squares represent nonzero entries. Note that each row or column of $\check V$ has at most one red square and all red squares are off-diagonal.
\end{remark}
\begin{figure}[ht!]
\centering
\includegraphics[width=0.31\textwidth]{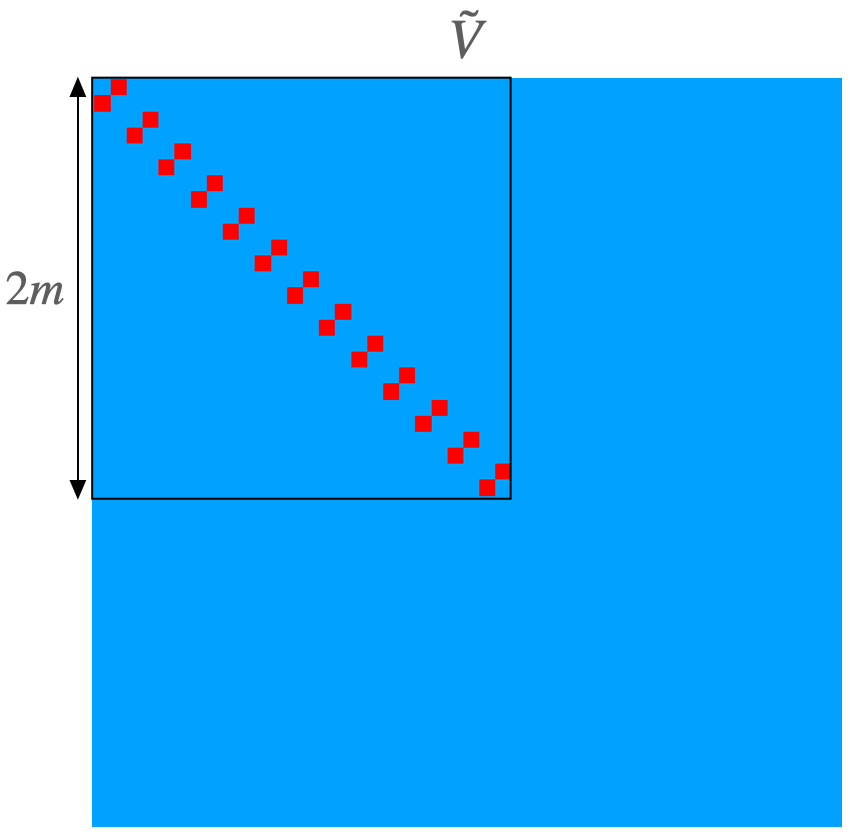}
\centering
\caption{Structure of $\check V$}
\label{fig:config-1}
\end{figure}
\begin{proof}
The result is intuitive, but we provide a proof for completeness.

Given $i,j\in[n]$, consider the mapping $\Pi_{i,j}:\R^{n\times n}\rightarrow \R^{n\times n}$ (respectively, $\Pi^{i,j}:\R^{n\times n}\rightarrow\R^{n\times n}$) that transposes the $i$-th and $j$-th rows (respectively, columns) of the matrix.
More precisely, given $V\in\R^{n\times n}$, $V'=\Pi_{i,j}V$ satisfies $V_{ik}'=V_{jk}$, $V_{jk}'=V_{ik}$ and $V_{\ell k}'=V_{\ell k}$ for all $k\in[n]$ and $\ell\in[n]\backslash\{i,j\}$, and $\Pi^{i,j}V=\left(\Pi_{i,j}V^\T\right)^\T$.
Then it is easy to verify that $\|\Pi V\|_{r\rightarrow p}=\|V\|_{r\rightarrow p}$ for $\Pi=\Pi_{i,j}$ or $\Pi=\Pi^{i,j}$ and $M_r(\Pi_{i,j}\Pi^{i,j}V)=M_r(V)$.
It is also immediate that if $V$ satisfies properties (2) and(3), then so does $\Pi_{i,j}V$ and $\Pi^{i,j}V$.

Now, set $V^{(0)}\coloneqq V$, which satisfies properties (1) -- (3), and recursively, for $k\in[m]$, define $i(k),j(k)\in[n]$ to be such that $j(k)>i(k)$ and $\left|V_{i(k),j(k)}^{(2k-2)}\right|=\argmax_{i>2k-2}\left|V_{ij}^{(2k-2)}\right|$, and then
set $V^{(2k-1)}\coloneqq \Pi_{2k-1,i(k)}\Pi^{2k-1,i(k)}V^{(2k-2)}$ and $V^{(2k)}\coloneqq \Pi_{2k,j(k)}\Pi^{2k,j(k)}V^{(2k-1)}$.
Then define $\check V\coloneqq V^{(2m)}$. By the properties of $\Pi_{i,j}$ and $\Pi^{i,j}$ mentioned above, it is immediate that $\check V$ satisfies properties (2), (3) and (6), and by construction, it is clear that $\check V$ is symmetric (since $V$ is) and satisfies (1) and (5). 
Finally, the fact that $\check V$ satisfies (5) and $V$ satisfies (3) together imply that $\check V$ satisfies (4).
\end{proof}

The following result characterizes the fluctuations of $\| A^{\mathsf{la}}\|_{r\rightarrow p}$ and $M_r( A^{\mathsf{la}})$. The latter will be used in \Cref{ss:gro2}.
\begin{proposition}\label{lem:fluct-large}Suppose  
$A_n\in\mathbb R^{n\times n}, n \in \N,$ satisfy \Cref{assume} with index $\alpha$, and let $ A^{\mathsf{la}}_n$ be as in \eqref{a-split} and $b_n$ as in \eqref{d:b}.
Then the following two properties hold: 
\begin{enumerate} 
\item Suppose $1\leq p<r\leq\infty$ and $\alpha\in\left(0,\min\left\{2,\frac{1}{\gamma}\right\}\right)$, where $\gamma\coloneqq \frac{1}{p}-\frac{1}{r}$. Then there exists an $\alpha\gamma$-stable random variable $Y_{\alpha \gamma}$ such that
\begin{align}
\begin{split}
\label{lim:stablepower}
    b_n^{-1}\| A^{\mathsf{la}}_n\|_{r\rightarrow p}\xrightarrow{(d)}Y_{\alpha\gamma}^{\gamma}.
\end{split}
\end{align}
\item Furthermore, suppose that $r>2$ and $\gamma =(r-2)/r$. Then there exists an $\alpha\gamma$-stable random variable such that
\begin{align}
\begin{split}
    b_n^{-1}M_r( A^{\mathsf{la}}_n)\xrightarrow{(d)} Y_{\alpha\gamma}^{\gamma}.
\end{split}
\end{align}
\end{enumerate}
\end{proposition}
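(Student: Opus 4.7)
The plan is to exploit the extreme sparsity of $A^{\mathsf{la}}_n$ to reduce both $\|A^{\mathsf{la}}_n\|_{r\to p}$ and $M_r(A^{\mathsf{la}}_n)$ to a scalar functional of the list of large entries, to which a stable central limit theorem can be applied. The three ingredients needed are the structural sparsity furnished by \Cref{lemma:typicalBehaviour} and \Cref{lemma:num-large-entry}, the rearrangement from \Cref{modification}, and a stable CLT for sums of iid nonnegative heavy-tailed variables of tail index strictly less than one.

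The first step would verify that, whp, $A^{\mathsf{la}}_n$ is off-diagonal, has at most one nonzero entry per row and per column, and that the total number $m$ of nonzero upper-triangular entries satisfies $m=\Theta_\delta(n^\zeta)<n/2$. Parts (a) and (b) of \Cref{lemma:typicalBehaviour} applied at the scale $n^{(2-\zeta)/\alpha}$ give the first two properties, since by \eqref{eqn:order-bn} and the bound $\zeta<1/4$ from \eqref{def:eta-para}, this threshold exceeds both $b_n^{11/20}$ and $b_n^{3/4+\delta}$ for all sufficiently small $\delta$; the count then follows from \Cref{lemma:num-large-entry} with $\theta=1-\zeta/2$. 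With these properties in hand, \Cref{modification} provides a measurable rearrangement $\check A^{\mathsf{la}}_n$ with identical $r\to p$ norm and $M_r$ value whose nonzero entries are $\check A^{\mathsf{la}}_{2k-1,2k}=\check A^{\mathsf{la}}_{2k,2k-1}=c_k$ for $k\in[m]$, where $\{|c_k|\}$ equals the multiset $\{|a_{ij}|:1\leq i<j\leq n,\,|a_{ij}|>n^{(2-\zeta)/\alpha}\}$. Up to zero padding, $\check A^{\mathsf{la}}_n$ is then block diagonal with $m$ blocks $B_k=\begin{pmatrix} 0 & c_k\\ c_k & 0\end{pmatrix}$.

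For each such block, a direct Lagrange multiplier computation yields $\|B_k\|_{r\to p}=2^\gamma|c_k|$ and, in the second item, $M_r(B_k)=2^{(r-2)/r}|c_k|$, both attained at $(2^{-1/r},\pm 2^{-1/r})$. For a block diagonal matrix, optimizing first within each block at fixed block mass $\alpha_k=\|x^{(k)}\|_r$ and then over $(\alpha_k)$ subject to $\sum_k\alpha_k^r\leq 1$ via Hölder's inequality produces the exact identities
\begin{equation*}
    \|\check A^{\mathsf{la}}_n\|_{r\to p}=\Bigl(\sum_{k=1}^m\|B_k\|_{r\to p}^{1/\gamma}\Bigr)^\gamma=2^\gamma\Bigl(\sum_{k=1}^m|c_k|^{1/\gamma}\Bigr)^\gamma,
\end{equation*}
and an identical identity holds for $M_r$ in the corresponding case. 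The problem is thereby reduced to proving that
\begin{equation*}
    b_n^{-1/\gamma}\sum_{1\leq i<j\leq n}|a_{ij}|^{1/\gamma}\bbm 1\{|a_{ij}|>n^{(2-\zeta)/\alpha}\}\xrightarrow{(d)}\tilde Y
\end{equation*}
for some positive $\alpha\gamma$-stable $\tilde Y$. Since $\alpha\gamma\in(0,1)$ under the hypothesis $\alpha<1/\gamma$, the variable $|a_{ij}|^{1/\gamma}$ is regularly varying with index $\alpha\gamma$, and using \eqref{eqn:property-slowly-varying} one checks that $b_n^{1/\gamma}$ is the correct normalizing constant. The classical stable CLT for iid positive summands then supplies the limit for the untruncated sum; the sub-threshold contribution is asymptotically negligible since $\mathbb{E}[|a_{ij}|^{1/\gamma}\bbm 1\{|a_{ij}|\leq n^{(2-\zeta)/\alpha}\}]$, multiplied by $N/b_n^{1/\gamma}$ with $N=n(n-1)/2$, vanishes by a short regular-variation computation using $\alpha\gamma<1$. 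Absorbing the $2^\gamma$ factor into the stable law yields the advertised limit in both items.

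The principal obstacle is the stable-CLT step: one needs to carefully track the slowly varying prefactor $L_o$ from \eqref{eqn:order-bn} through the $1/\gamma$-power transformation to confirm that $b_n^{1/\gamma}$ truly normalizes the tail-index-$\alpha\gamma$ sum, and verify that the truncation error vanishes precisely in the subcritical range $\alpha\gamma<1$. A secondary subtlety is that the block-diagonal aggregation formula for $M_r$ must be an equality rather than an inequality, which here is ensured by the one-dimensional family of maximizers available in each $B_k$, allowing all the per-block optima to be realized simultaneously.
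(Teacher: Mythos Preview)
Your proposal is correct and mirrors the paper's proof: reduce via \Cref{modification} to a block-diagonal matrix, identify both the norm and $M_r$ as $\bigl(2\sum_k|c_k|^{1/\gamma}\bigr)^\gamma$ via H\"older, remove the sub-threshold truncation (the paper invokes \Cref{l:sum-small}, you use an equivalent first-moment bound), and apply the stable CLT. The only cosmetic difference is that the paper performs the H\"older optimization globally with explicit dual vectors \eqref{opt-x}--\eqref{opt-y} and then handles Part~2 by noting these vectors coincide when $p=r^*$, whereas you compute per-block norms first and aggregate---both routes produce the same identity.
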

\begin{proof}
In this proof, we drop the subscript $n$ of $A_n$ and $ A^{\mathsf{la}}_n$ for brevity. Since $A$ is symmetric, \eqref{a-split} ensures $ A^{\mathsf{la}}$ is also symmetric and by \Cref{lemma:typicalBehaviour}, $ A^{\mathsf{la}}$ satisfies properties (1) and (3) of \Cref{modification}. Moreover, if $2m(n)$ denotes the number of nonzero entries of $ A^{\mathsf{la}}= A^{\mathsf{la}}_n$, then an application of \Cref{lemma:num-large-entry} with $\theta=1-\frac{\zeta}{2}$, together with the choice of $\zeta$ in \eqref{def:eta-para}, shows that $m(n)=o(n)$ \whp. Thus, \Cref{modification} shows that \whp, there exists another symmetric matrix $\check A$ that satisfies properties (1) -- (6) of \Cref{modification}.

\begin{enumerate}
\item Fix $1\leq p<r\leq \infty$. By the structure of $\check A$ and the corresponding dual formulation \eqref{eq:rtop-dual} of $\|\check A\|_{r\rightarrow p}$, we see that,
\begin{align}
\begin{split}
    \|\check A\|_{r\rightarrow p}=\max_{\|\bm x\|_r =\|\bm y\|_{p^*} =1}\sum_{k\in[m]}\left(\check A_{2k-1,2k}x_{2k}y_{2k-1}+\check A_{2k,2k-1}x_{2k-1}y_{2k}\right).
    \label{rewrite2}
\end{split}
\end{align}
For all $\bm x,\bm y\in\mathbb R^n$ with $\|\bm x\|_r=\|\bm y\|_{p^*}=1$, using H\"older's inequality twice, we have
\begin{align}\label{seq2'}
\begin{split}
    &\sum_{k\in[m(n)]}\left(\check A_{2k-1,2k}x_{2k}y_{2k-1}+\check A_{2k,2k-1}x_{2k-1}y_{2k}\right)\\
    &\leq \left(2\sum_{k\in[m(n)]}\left|\check A_{2k-1,2k} \right|^{\frac{1}{\gamma}}\right)^{\gamma}\left[\sum_{k\in[m(n)]}\left(|x_{2k}y_{2k-1}|^{\frac{rp}{rp-r+p}}+|x_{2k-1}y_{2k}|^{\frac{rp}{rp-r+p}}\right)\right]^{\frac{rp-r+p}{rp}}\\
    &\leq \left(2\sum_{k\in[m(n)]}\left|\check A_{2k-1,2k} \right|^{\frac{1}{\gamma}}\right)^{\gamma}\left(\sum_{k\in[2m(n)]}|x_k|^r\right)^{\frac{1}{r}}\left(\sum_{k\in[2m(n)]}|y_k|^{p^*}\right)^{\frac{1}{p^*}}\\
    &\leq \left(2\sum_{k\in[m(n)]}\left|\check A_{2k-1,2k} \right|^{\frac{1}{\gamma}}\right)^{\gamma},
\end{split}
\end{align}
and the equalities can be achieved simultaneously by choosing vectors $\bm x=(x_1,\ldots,x_n),\bm y=(y_1,\ldots,y_n)$ that satisfy $\|\bm x\|_r=\|\bm  y\|_{p^*}=1$, and with $\psi_r$ as defined in \eqref{def:psi}, 
\begin{align}\label{opt-x}
\begin{split}
    x_i \propto \begin{cases}\psi_{\frac{r}{r-p}}\left(\check A_{2 k-1,2 k}\right), & \mbox{ if } i=2 k-1,1 \leq k \leq m (n)\\ \left|\check A_{2 k-1,2 k}\right|^{\frac{p}{r-p}}, & \mbox{ if } i=2 k, 1 \leq k \leq m(n) \\ 0, & \mbox{ if } 2 m(n)+1 \leq i \leq n,\end{cases}
\end{split}
\end{align}
and 
\begin{align}\label{opt-y}
\begin{split}
    y_i \propto \begin{cases}\psi_{\frac{r p-p}{r-p}}\left(\check A_{2 k-1,2 k}\right), & i=2 k-1,1 \leq k \leq m(n) \\ \left|\check A_{2 k-1,2 k}\right|^{\frac{r p-r}{r-p}}, & i=2 k, 1 \leq k \leq m(n) \\ 0, & 2 m(n)+1 \leq i \leq N, \end{cases}
\end{split}
\end{align}

Due to the equality $\left\{\check A_{ij}:\,i,j\in[n]\right\}=\left\{ A^{\mathsf{la}}_{ij}:\,i,j\in[n]\right\}$, properties (1)-(3) of \Cref{modification}, and the definition of $ A^{\mathsf{la}}$ in \eqref{a-split}, we have \whp,
\begin{align}\label{seq4}
\begin{split}
    2\sum_{k\in[m(n)]}\left|\check A_{2k-1,2k} \right|^{\frac{1}{\gamma}}
    =\sum_{i,j\in[n]}\left|\check A_{ij}\right|^{\frac{1}{\gamma}}
    =\sum_{i,j\in[n]}\left| A^{\mathsf{la}}_{ij} \right|^{\frac{1}{\gamma}}
    =2\sum_{1\leq i<j\leq n}|a_{ij}|^{\frac{1}{\gamma}}\bbm 1\left(|a_{ij}|\geq n^{\frac{2-\zeta}{\alpha}}\right).
\end{split}
\end{align}

Since $\alpha \in (0,\frac{1}{\gamma})$,
applying \Cref{l:sum-small} with $\left\{X_\ell\right\}_{\ell\in[n(n+1)/2]}=\left\{|a_{ij}|\right\}_{1\leq i\leq j\leq n}$, $q=\frac{1}{\gamma}$ and $\beta=\frac{2-\zeta}{\alpha}$,  we have \whp,
\begin{align}\label{seq3'}
\begin{split}
    \sum_{1\leq i\leq j\leq n}|a_{ij}|^{\frac{1}{\gamma}}\bbm 1\left\{|a_{ij}|\leq n^{\frac{2-\zeta}{\alpha}}\right\}=O_\delta\left(n^{\frac{2}{\alpha\gamma}-\zeta\left(\frac{1}{\alpha\gamma}-1\right)}\right)=o\left(b_n^{\frac{1}{\gamma}}\right), 
\end{split}
\end{align}
where $b_n$ is as in \eqref{d:b}. 
The generalized central limit theorem \cite[Theorem~3.8.2]{Durrett19} then implies
\begin{align}\label{seq5}
\begin{split}
    b_n^{-\frac{1}{\gamma}}\cdot2\sum_{1\leq i<j\leq n}|a_{ij}|^{\frac{1}{\gamma}}\xrightarrow{(d)} Y_{\alpha\gamma},
\end{split}
\end{align}
where $Y_{\alpha\gamma}$ is an $\alpha\gamma$-stable random variable (see \Cref{def:stable}). 
The limit \eqref{lim:stablepower} stated in property 1 then follows on 
combining \eqref{rewrite2}, \eqref{seq2'}, \eqref{seq4}, \eqref{seq3'} and \eqref{seq5} with the continuous mapping theorem. 

\item Fix $2<r\leq\infty$. 
Observe that when $p=r^*$, the optimizers $\bm x,\bm y$ for $\|\check A\|_{r\rightarrow r^*}$, satisfying $\|\bm x\|_r=\|\bm y\|_{p^*}=1$, \eqref{opt-x} and \eqref{opt-y}, coincide. This implies that 
\begin{align}\label{comp2}
\begin{split}
    \|\check A\|_{r\rightarrow r^*}=\bm x^\T \check A\bm x\leq M_r(\check A).
\end{split}
\end{align}
When combined with the converse inequality that always holds by \eqref{ineq:dual}, it follows that 
$M_r(\check A)=\|\check A\|_{r\rightarrow r^*}$. Thus, the conclusion follows from the first part of the proposition and the identity 
$M_r(\check A) = M_r( A^{\mathsf{la}})$ 
stated as property (6) of \Cref{modification}. \qedhere
\end{enumerate}
\end{proof}
We are now ready to present the proof of \Cref{thm:rtop2}.
\begin{proof}[Proof of \Cref{thm:rtop2} and \Cref{thm:rtop2'}]
Recall the decomposition of $A= A^{\mathsf{sm}}+ A^{\mathsf{int}}+ A^{\mathsf{la}}$ in \eqref{a-split}. By the triangle inequality, \Cref{lem:omit-small} and \Cref{lem:omit-middle}, under assumptions of \Cref{thm:rtop2} or \Cref{thm:rtop2'}, we have \whp,
\begin{align}\label{approx1}
\begin{split}
    \left|\|A\|_{r\rightarrow p}-\| A^{\mathsf{la}}\|_{r\rightarrow p}\right|\leq \| A^{\mathsf{sm}}\|_{r\rightarrow p}+\| A^{\mathsf{int}}\|_{r\rightarrow p}=o(b_n).
\end{split}
\end{align}
Together with \Cref{lem:fluct-large}, this completes the proof.
\end{proof}

\section{Proof of \texorpdfstring{\Cref{thm:rtop3}}{non-centered}}\label{s:rtop3}
Throughout this section, denote by $\gamma=\frac{r-p}{rp}$ as in the statement of \Cref{thm:rtop3}. Fix $\alpha\in\left(0,\min\left\{2,\frac{1}{\gamma}\right\}\right)$ and $1<p<r<\infty$, define 
\begin{equation}\label{eta-para2}
\begin{aligned}
\eta & \coloneqq \frac{1}{2}\left(1-\alpha\gamma\right), \\
\zeta & \coloneqq \min \left\{\frac{1}{4}\left(1-\alpha\gamma\right), \left(1-\frac{\alpha}{2}\right)\frac{p-1}{r}\right\}.
\end{aligned}
\end{equation}
With these $\eta$ and $\zeta$, we adopt the same decomposition scheme of $A$ as in \eqref{a-split}. Also note that under the assumptions of \Cref{thm:rtop3}, $0<\zeta<\eta$.

The section is organized as follows. \Cref{ss:aux} contains some auxiliary results used in the derivation of upper and lower bounds on fluctuation of the $r\rightarrow p$ norm of $\mu\bm 1\bm 1^\T + A^{\mathsf{la}}+ A^{\mathsf{int}}$, which are presented in \Cref{ss:main-upper} and \Cref{ss:main-lower} respectively.  The proof of \Cref{thm:rtop3} is completed in \Cref{ss:matching}  by showing that these two bounds match asymptotically and that adding back $ A^{\mathsf{sm}}$ leads to a negligible change in the asymptotic fluctuations.

\subsection{Auxiliary results}\label{ss:aux}
We start with four auxiliary lemmas. The first two concern the structures of matrix $ A^{\mathsf{int}}+ A^{\mathsf{la}}$ and $ A^{\mathsf{la}}$.
\begin{lemma}\label{l:product0} Suppose the assumptions of \Cref{thm:rtop3} hold and
$ A^{\mathsf{la}}=\left( a^{\mathsf{la}}_{ij}\right)_{i,j\in[n]} , A^{\mathsf{int}}=\left( a^{\mathsf{int}}_{ij}\right)_{i,j\in[n]}$ are as in \eqref{a-split}. Then \whp, for all $i,j,k\in[n]$,
\begin{align}
\begin{split}
     a^{\mathsf{la}}_{ij} a^{\mathsf{int}}_{ik}=0.
\end{split}
\end{align}
\end{lemma}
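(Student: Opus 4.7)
The plan is to establish this sparsity statement by a direct union bound, exploiting independence of distinct entries together with the heavy-tailed tail estimate \eqref{ccdf}.

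First I would make the obvious reduction to $j\neq k$. Indeed, if $j=k$, then the definitions in \eqref{a-split} force $a^{\mathsf{la}}_{ij}\cdot a^{\mathsf{int}}_{ij}=0$, since $a^{\mathsf{la}}_{ij}\neq 0$ requires $|a_{ij}|>n^{(2-\zeta)/\alpha}$ while $a^{\mathsf{int}}_{ij}\neq 0$ requires $|a_{ij}|\leq n^{(2-\zeta)/\alpha}$. So only the case $j\neq k$ needs to be controlled.

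Next, for $j\neq k$, I would note that the two entries $a_{ij}$ and $a_{ik}$ are independent. This is because, by the symmetry of $A$, each $a_{\ell m}$ coincides with the upper-triangular entry indexed by $(\min\{\ell,m\},\max\{\ell,m\})$, and for $j\neq k$ the two pairs $(\min\{i,j\},\max\{i,j\})$ and $(\min\{i,k\},\max\{i,k\})$ are distinct, so the corresponding entries belong to the i.i.d.\ family in \Cref{assume}. Applying \eqref{ccdf} together with \Cref{l:slowly-varying-property}, for any $\delta>0$ one has
\begin{equation*}
\P\bigl(|a_{ij}|>n^{(2-\zeta)/\alpha}\bigr)=O_\delta\bigl(n^{\zeta-2}\bigr),\qquad \P\bigl(|a_{ik}|>n^{(1+\eta)/\alpha}\bigr)=O_\delta\bigl(n^{-1-\eta}\bigr),
\end{equation*}
and therefore by independence $\P(a^{\mathsf{la}}_{ij}\neq 0,\,a^{\mathsf{int}}_{ik}\neq 0)=O_\delta(n^{\zeta-\eta-3})$.

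A union bound over the at most $n^3$ triples $(i,j,k)\in[n]^3$ with $j\neq k$ then yields
\begin{equation*}
\P\bigl(\exists\,i,j,k\in[n]:a^{\mathsf{la}}_{ij}a^{\mathsf{int}}_{ik}\neq 0\bigr)=O_\delta\bigl(n^{\zeta-\eta}\bigr),
\end{equation*}
and the proof will conclude by invoking the definition \eqref{eta-para2}, which is precisely calibrated so that $\zeta<\eta$; hence the right-hand side tends to $0$. There is no substantive obstacle in the argument: the only thing to verify is that the parameter choices in \eqref{eta-para2} put the separation threshold $n^{(1+\eta)/\alpha}$ for intermediate entries and the threshold $n^{(2-\zeta)/\alpha}$ for large entries sufficiently far apart to make large and intermediate entries unable to coexist in the same row. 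This is exactly what the relation $\zeta<\eta$ encodes, and it is why the decomposition in \eqref{a-split} was chosen with these exponents.
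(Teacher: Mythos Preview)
Your proof is correct and follows essentially the same approach as the paper: a union bound over the $O(n^3)$ triples $(i,j,k)$, combined with independence of distinct entries and the heavy-tail estimate \eqref{ccdf}, to obtain a bound of order $n^{\zeta-\eta}$ (up to slowly varying factors), which vanishes because \eqref{eta-para2} guarantees $\zeta<\eta$. The paper's proof is simply a terser one-line version of the same computation; your explicit handling of the case $j=k$ and the justification of independence are nice clarifications but do not represent a different idea.
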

\begin{proof}
By \eqref{eta-para2}, \eqref{a-split}, the assumption that $(a_{ij})$ are heavy-tailed with index $\alpha$ and \Cref{d:heavy}, the probability that there exists $i,j,k\in[n]$ such that $ a^{\mathsf{la}}_{ij} a^{\mathsf{int}}_{ik}\neq 0$ is upper bounded by
\begin{align}
\begin{split}
    n\binom{n}{2}L\left(n^{\frac{2-\zeta}{\alpha}}\right)L\left(n^{\frac{1+\eta}{\alpha}}\right)\left(n^{\frac{2-\zeta}{\alpha}}\right)^{-\alpha}\left(n^{\frac{1+\eta}{\alpha}}\right)^{-\alpha}\leq n^{(-\eta+\zeta)/2}\rightarrow 0.
\end{split}
\end{align}
The proof is complete.
\end{proof}
\begin{lemma} Suppose the assumptions of \Cref{thm:rtop3} hold and
$ A^{\mathsf{la}}=\left( a^{\mathsf{la}}_{ij}\right)_{i,j\in[n]} $ is as in \eqref{a-split}. Fix $q\geq 0$.
Then \whp, for all $i\in[n]$,
\begin{align}\label{eq-01}
\begin{split}
    \left|( A^{\mathsf{la}}\bm 1)_i\right|^q=\sum_{j\in[n]}| a^{\mathsf{la}}_{ij}|^q.
\end{split}
\end{align}
\end{lemma}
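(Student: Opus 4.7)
\textbf{Proof plan for \eqref{eq-01}.} The strategy is to upgrade the row-sparsity statement of Lemma \ref{l:row-sparse} to show that the stronger matrix $A^{\mathsf{la}}$ is so sparse that, with high probability, each of its rows contains \emph{at most one} nonzero entry. Once this is established, \eqref{eq-01} is immediate: if the $i$-th row is identically zero, both sides vanish, while if the $i$-th row has a unique nonzero entry $a^{\mathsf{la}}_{ij_i}$, then
\begin{equation*}
\bigl|(A^{\mathsf{la}}\mathbf{1})_i\bigr|^q=\bigl|a^{\mathsf{la}}_{ij_i}\bigr|^q=\sum_{j\in[n]}\bigl|a^{\mathsf{la}}_{ij}\bigr|^q,
\end{equation*}
since all other summands on the right are zero.

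To verify the sparsity claim, I would fix $i\in[n]$ and bound the probability that row $i$ has two or more entries of magnitude exceeding $n^{(2-\zeta)/\alpha}$. By \eqref{ccdf} and the independence of the upper-triangular entries (together with symmetry contributing only lower-order factors), this probability is at most $\binom{n}{2}\bar F\bigl(n^{(2-\zeta)/\alpha}\bigr)^2$. Using \eqref{ccdf} and the slow variation of $L$ via Lemma \ref{l:slowly-varying-property}, this is bounded by $O_\delta\bigl(n^{2-2(2-\zeta)}\bigr)=O_\delta\bigl(n^{-2+2\zeta}\bigr)$. A union bound over the $n$ rows then gives probability at most $O_\delta\bigl(n^{-1+2\zeta}\bigr)$. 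Since by \eqref{eta-para2} one has $\zeta\leq\tfrac{1}{4}(1-\alpha\gamma)<\tfrac{1}{4}$, and in particular $2\zeta<1$, this bound tends to zero (choosing $\delta>0$ small enough so that the slowly varying factor is absorbed). The symmetric statement for columns follows by transposing, or is already subsumed by the row statement thanks to symmetry of $A^{\mathsf{la}}$.

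The only mildly delicate point is to ensure the exponent $-1+2\zeta$ is strictly negative even after accounting for the slowly varying correction $L(n^{(2-\zeta)/\alpha})$; this is exactly the role of \eqref{eta-para2}, which forces $\zeta$ to be bounded away from $1/2$, leaving room for the $n^{O(\delta)}$ slack from Lemma \ref{l:slowly-varying-property}. No deeper input is needed; the lemma is really just a convenient restatement of the elementary observation that ``very heavy tails $\Rightarrow$ collisions within a single row are negligible,'' which is precisely the regime produced by the cutoff defining $A^{\mathsf{la}}$ in \eqref{a-split}.
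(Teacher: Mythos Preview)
Your proposal is correct and follows essentially the same approach as the paper: both arguments reduce to showing that \whp\ every row of $A^{\mathsf{la}}$ has at most one nonzero entry, after which \eqref{eq-01} is immediate. The only difference is cosmetic: the paper invokes \Cref{lemma:typicalBehaviour}(b) (noting that the cutoff $n^{(2-\zeta)/\alpha}$ exceeds $b_n^{3/4+\delta}$ since $\zeta<1/4$ by \eqref{eta-para2}), whereas you reprove this row-sparsity fact directly via the union bound $n\binom{n}{2}\bar F(n^{(2-\zeta)/\alpha})^2=O_\delta(n^{-1+2\zeta})\to 0$.
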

\begin{proof}
Recalling the choice of $\zeta$ in \eqref{eta-para2}, definition of $ A^{\mathsf{la}}$ in \eqref{a-split} and \eqref{eqn:order-bn}, \Cref{lemma:typicalBehaviour} (b) implies that \whp, there is at most one nonzero entry on each row or column of $ A^{\mathsf{la}}$. Say the nonzero entry on the $i$-th row of $ A^{\mathsf{la}}$ is on the $i^*$-column. 
We have \whp, for all $i\in[n]$,
\begin{align}
\begin{split}
    \left|\left( A^{\mathsf{la}}\bm 1\right)_i\right|^q=|a_{ii^*}|^q=\sum_{j\in[n]}\left| a^{\mathsf{la}}_{ij}\right|^q.
\end{split}
\end{align}
This completes the proof.
\end{proof}
The third result identifies a typical property of heavy-tailed entries.
\begin{lemma}\label{l:sum-big}
Let $\left(X_i\right)_{i\in[n(n+1)/2]}$ be i.i.d. copies of heavy-tailed random variable with index $\alpha\in(0,2)$. Fix $q>0$ and $\beta\in(0,\frac{2}{\alpha})$. Denote by $\tilde X_i=X_i\bbm 1\left\{|X_i|> n^\beta\right\}$. Then \whp,
\begin{align}\label{goalgoal2}
\begin{split}
    \sum_{i\in[n(n+1)/2]}\left|\tilde X_i\right|^q =O_\delta\left(n^{\max\left\{q\beta-\alpha\beta+2,\frac{2q}{\alpha}\right\}}\right).
\end{split}
\end{align}
\end{lemma}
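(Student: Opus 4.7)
The plan is to adapt the argument used in the proof of \Cref{l:sum-small} to the complementary truncation regime, with an additional upper truncation to handle the case $q \geq \alpha$ where the raw moment $\mathbb{E}[|\tilde X_1|^q]$ may fail to be finite.

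First, I would use the tail estimate $\bar F(x) = x^{-\alpha} L(x)$ combined with \Cref{l:slowly-varying-property} (or equivalently \Cref{Frechet} together with \eqref{eqn:order-bn}) to show that for every $\delta > 0$, the event $\{\max_i |X_i| \leq n^{2/\alpha + \delta}\}$ holds \whp. On this event, replacing $\tilde X_i$ by the doubly-truncated variable $\hat X_i \coloneqq X_i \bbm 1\{n^\beta < |X_i| \leq n^{2/\alpha + \delta}\}$ leaves the sum unchanged, so it suffices to bound $\sum_i |\hat X_i|^q$.

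Second, I would compute $\mathbb{E}[|\hat X_1|^q]$ via the layer-cake representation, splitting the integral at $n^{q\beta}$ to obtain
\[
\mathbb{E}[|\hat X_1|^q] \leq n^{q\beta}\,\bar F(n^\beta) + q \int_{n^\beta}^{n^{2/\alpha + \delta}} u^{q-1}\,\bar F(u)\, du = O_\delta\bigl(n^{\max\{(q-\alpha)\beta,\ (q-\alpha)(2/\alpha)\}+\delta'}\bigr),
\]
where the three sub-cases $q < \alpha$, $q = \alpha$, and $q > \alpha$ correspond, respectively, to the lower limit dominating, a logarithmic contribution absorbed into $O_\delta$, and the upper limit dominating. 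Summing over the $n(n+1)/2$ i.i.d.\ copies then yields
\[
\mathbb{E}\Bigl[\sum_i |\hat X_i|^q\Bigr] = O_\delta\bigl(n^{\max\{q\beta - \alpha\beta + 2,\ 2q/\alpha\}}\bigr).
\]

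Third, since $\sum_i |\hat X_i|^q$ is a nonnegative random variable, Markov's inequality directly gives the concentration $\P(\sum_i |\hat X_i|^q > n^\delta \mathbb{E}[\sum_i |\hat X_i|^q]) \leq n^{-\delta}$ for any $\delta > 0$. Combining this with the first step yields the claimed \whp\ bound.

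The main technical obstacle is handling the regime $q \geq \alpha$, where the single-entry expectation would be infinite without an upper truncation. The extreme-value estimate for heavy-tailed i.i.d.\ samples supplies a natural cutoff at $n^{2/\alpha + \delta}$ at essentially no cost, and the resulting expectation then interpolates smoothly between the two bounds $n^{q\beta - \alpha\beta + 2}$ (the contribution from the ``bulk'' of above-threshold entries, which dominates when $q < \alpha$) and $n^{2q/\alpha}$ (the contribution driven by the $O_\delta(1)$ largest entries, which dominates when $q > \alpha$). A simple first-moment/Markov argument suffices since the summands are nonnegative; no second-moment or Bernstein-type bound is required.
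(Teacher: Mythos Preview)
Your argument is correct and in fact somewhat cleaner than the paper's. Both proofs hinge on an upper truncation to make the $q$-th moment finite when $q\geq\alpha$, followed by a first-moment Markov bound on the truncated sum. The difference is where the cut is placed. The paper introduces an intermediate level $n^{2\theta/\alpha}$ with $\theta<1$, handles the piece above it by counting the $\Theta_\delta(n^{2-2\theta})$ surviving entries via \Cref{lemma:num-large-entry} and bounding each by the global maximum, and handles the piece below by a moment computation plus Markov; the auxiliary parameter $\epsilon$ (through $\theta>1-\epsilon$) is then sent to zero. You instead truncate directly at the \whp\ maximum $n^{2/\alpha+\delta}$, so the ``top'' piece is empty on a high-probability event and the entire sum is handled by a single layer-cake moment computation. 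This bypasses the counting lemma and the intermediate parameter altogether. The paper also records a variance bound for the truncated piece, but, as you note, it is not needed: nonnegativity of the summands makes the first-moment Markov inequality sufficient. The only care point in your write-up is to make explicit that the truncation height depends on an auxiliary $\delta$ that is sent to zero at the end (so that the extra $\delta(q-\alpha)$ in the exponent when $q>\alpha$ is absorbed into the $O_\delta$), which you already flag with your ``$+\delta'$''.
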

\begin{proof}
By considering $\left|X_i\right|$, we may, without loss of generality, assume that $X_i$'s are nonnegative. Fix any $\epsilon>0$ and $\theta$ such that 
\begin{align}\label{theta-para}
\begin{split}
    \max\left\{\frac{\alpha\beta}{2},1-\epsilon\right\}<\theta<1.
\end{split}
\end{align}
Consider the following decomposition of $\tilde X_i$:
\begin{align}
\begin{split}
    \tilde X_i = \bar X_i+\hat X_i,
\end{split}
\end{align}
where 
\begin{align}\label{x-split}
\begin{split}
    \bar X_i\coloneqq \tilde X_i\bbm 1\left\{\tilde X_i\leq n^{\frac{2\theta}{\alpha}}\right\},\quad \hat X_i\coloneqq \tilde X_i\bbm 1\left\{\tilde X_i>n^{\frac{2\theta}{\alpha}}\right\}.
\end{split}
\end{align}
By \Cref{lemma:num-large-entry}, \whp, there are $\Theta_\delta\left({n^{2-2\theta}}\right)$ nonzero $\hat X_i$. By \eqref{ccdf} and \Cref{l:slowly-varying-property}, for any $\epsilon>0$,
\begin{align}
\begin{split}
    \P\left(\max_{i\in[n(n+1)/2]}\hat X_i\geq n^{\frac{2}{\alpha}+\epsilon}\right)\leq \P\left(\max_{i\in[n(n+1)/2]}X_i\geq n^{\frac{2}{\alpha}+\epsilon}\right)=O_\delta\left(n^2\cdot n^{-2-\alpha\epsilon}\right)=o(1),
\end{split}
\end{align}
and hence $\max_{i\in[n(n+1)/2]}\hat X_i=O_\delta\left(n^{\frac{2}{\alpha}}\right)$. 
Therefore \whp, for any $q>0$,
\begin{align}\label{hat-x}
\begin{split}
    \sum_{i\in[n(n+1)/2]}\hat X_i^{q}=O_\delta\left(n^{2-2\theta+\frac{2q}{\alpha}}\right)=O_\delta\left(n^{2\epsilon+\frac{2q}{\alpha}}\right),
\end{split}
\end{align}
where we used \eqref{theta-para} in the last step.

On the other hand, we have by \eqref{theta-para},
\begin{equation}\label{mean-var-bar-x}
\begin{aligned}
\E\left[\sum_{i\in[n(n+1)/2]}\bar X_i^q\right]
&=\frac{n(n+1)}{2}\E\left[\bar X_1^q\right]
&\leq n^{\epsilon/2+2} \int_{n^{\beta q}}^{n^{2\theta q/\alpha}}x^{-\frac{\alpha}{q}}dx
&=n^{2+\epsilon+\max\left\{\beta (q-\alpha),\frac{2\theta}{\alpha}\left(q-\alpha\right)\right\}},\\
\operatorname{Var}\left(\sum_{i\in[n(n+1)/2]}\bar X_i^q\right)
&\leq \frac{n(n+1)}{2}\E\left[\bar X_1^{2q}\right]
&=n^{\epsilon/2+2}\int_{n^{2q\beta}}^{n^{4\theta q/\alpha}}x^{-\frac{\alpha}{2q}}dx
&=n^{2+\epsilon+\max\left\{\beta(2q-\alpha),\frac{2\theta}{\alpha}\left(2q-\alpha\right)\right\}}.
\end{aligned}
\end{equation}
Combining \eqref{mean-var-bar-x} with Markov's inequality and \eqref{theta-para}, we have \whp, for all $q\geq0$,
\begin{align}\label{bar-x}
\begin{split}
    \sum_{i\in[n(n+1)/2]}\bar X_i^q=O_\delta\left(n^{2+\epsilon +\max\left\{\beta(q-\alpha),\frac{2\theta}{\alpha}(q-\alpha)\right\}}\right).
\end{split}
\end{align}
Combining $\theta<1$ from \eqref{theta-para}, \eqref{hat-x} and \eqref{bar-x}, we have \whp, for any $q>0$,
\begin{align}
\begin{split}
    \sum_{i\in[n(n+1)/2]}\tilde X_i^q=O_\delta\left(n^{\max\left\{2\epsilon+\frac{2q}{\alpha},2+\epsilon+\beta (q-\alpha)\right\}}\right).
\end{split}
\end{align}
Recalling that $\epsilon>0$ is arbitrary, the proof is complete.
\end{proof}
The fourth lemma, which we will use multiple times in the rest of the section, is an immediate consequence of \Cref{l:sum-small} and \Cref{l:sum-big},
\begin{lemma}
Suppose the assumptions of \Cref{thm:rtop3} hold and recall that $\gamma=\frac{r-p}{rp}$. With $b_n$ as defined in \eqref{d:b} and $\zeta$ in \eqref{eta-para2}, we have \whp,
\begin{align}
    \sum_{i,j\in[n]}|a_{ij}|^{\frac1\gamma}\bbm 1\left(|a_{ij}|\leq n^{\frac{2-\zeta}{\alpha}}\right)&=o\left(b_n^{\frac1\gamma}\right),\label{rp-sum-small}\\
    \sum_{i,j\in[n]}|a_{ij}|^{\frac1\gamma}&=o\left(n^{1+\frac1\gamma}\right).\label{rp-sum}
\end{align}
\end{lemma}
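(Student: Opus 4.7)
The plan is to reduce both \eqref{rp-sum-small} and \eqref{rp-sum} to direct applications of \Cref{l:sum-small} and \Cref{l:sum-big} to the $n(n+1)/2$ i.i.d.\ upper-triangular entries $\{|a_{ij}|:1\le i\le j\le n\}$, after using symmetry to pass from $\sum_{i,j\in[n]}$ to $2\sum_{1\le i\le j\le n}$. Before invoking the lemmas I would first extract from the hypotheses of \Cref{thm:rtop3} two scalar facts: that $\alpha\gamma<1$, which is needed so that the choice $q=1/\gamma$ satisfies the hypothesis $q>\alpha$ of both lemmas, and that $\frac{2}{\alpha\gamma}<1+\frac{1}{\gamma}$, which is a rearrangement of $\alpha>\alpha_*(\gamma)=\frac{2}{1+\gamma}$. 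The first follows from $\bar\alpha_{*,2}=\frac{2-\gamma}{\gamma(\gamma'-\gamma)+1}\le 2-\gamma<1/\gamma$, where the final inequality is $(\gamma-1)^2>0$ and holds since $\gamma=\frac{1}{p}-\frac{1}{r}\in(0,1)$.

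For \eqref{rp-sum-small} I would apply \Cref{l:sum-small} with $q=1/\gamma$ and $\beta=(2-\zeta)/\alpha$; the condition $\beta<2/\alpha$ is immediate from $\zeta>0$ in \eqref{eta-para2}. The output exponent $(1/\gamma-\alpha)(2-\zeta)/\alpha+2$ simplifies to $(2-\zeta)/(\alpha\gamma)+\zeta$, which should be compared with the exponent $2/(\alpha\gamma)$ coming from $b_n^{1/\gamma}\asymp L_o(n)^{1/\gamma}n^{2/(\alpha\gamma)}$ (via \eqref{eqn:order-bn}). Their difference equals $\zeta(1-1/(\alpha\gamma))<0$ by $\alpha\gamma<1$, so the bound is strictly smaller than $b_n^{1/\gamma}$ by a polynomial factor; this absorbs the $n^\delta$ slack implicit in the $O_\delta$ notation and yields $o(b_n^{1/\gamma})$.

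For \eqref{rp-sum} I would split the sum at a threshold $n^\beta$ for an arbitrarily small fixed $\beta\in(0,2/\alpha)$: \Cref{l:sum-small} handles the piece with $|a_{ij}|\le n^\beta$ and \Cref{l:sum-big} handles the complementary piece. Their combined bound is $O_\delta\bigl(n^{\max\{(1/\gamma-\alpha)\beta+2,\,2/(\alpha\gamma)\}}\bigr)$; taking $\beta$ small enough that $(1/\gamma-\alpha)\beta+2<2/(\alpha\gamma)$---possible because $2/(\alpha\gamma)>2$ when $\alpha\gamma<1$---the total reduces to $O_\delta(n^{2/(\alpha\gamma)})$, and the second scalar inequality $2/(\alpha\gamma)<1+1/\gamma$ gives the desired $o(n^{1+1/\gamma})$. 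There is no genuine analytic obstacle here; the only delicate point is the preliminary bookkeeping in the first paragraph, verifying that the parameter range $(\alpha_*,\bar\alpha_{*,2})$ of \Cref{thm:rtop3} implies both of the scalar inequalities that drive the exponent comparisons in the two estimates.
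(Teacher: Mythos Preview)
Your proposal is correct and follows essentially the same route as the paper: both parts reduce to \Cref{l:sum-small} and \Cref{l:sum-big} applied to the upper-triangular entries with $q=1/\gamma$, followed by the exponent comparisons $\zeta\bigl(1-\tfrac{1}{\alpha\gamma}\bigr)<0$ and $\tfrac{2}{\alpha\gamma}<1+\tfrac{1}{\gamma}$. The only cosmetic difference is that for \eqref{rp-sum} the paper reuses the threshold $n^{(2-\zeta)/\alpha}$ (so the small-entry contribution is already covered by \eqref{rp-sum-small}) instead of introducing a fresh small $\beta$, but the arithmetic and conclusion are identical.
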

\begin{proof}
Note that \eqref{rtop3-cond} implies $\alpha<\frac{1}{\gamma}$. Hence, \Cref{l:sum-small} with $\{X_i:\,i\in[n(n+1)/2]\}=\{a_{ij}:\,1\leq i\leq j\leq n\},\, q=\frac{1}{\gamma}$ and $\beta=\frac{2-\zeta}{\alpha}$ implies that \whp, 
\begin{align}
\begin{split}
    \sum_{i,j\in[n]}|a_{ij}|^{\frac1\gamma}\bbm 1\left(|a_{ij}|\leq n^{\frac{2-\zeta}{\alpha}}\right)=O_\delta\left(n^{\frac{2}{\alpha\gamma}-\left(\frac{1}{\alpha\gamma}-1\right)\zeta}\right)=o\left(b_n^{\frac1\gamma}\right),
\end{split}
\end{align}
where we use \eqref{eqn:order-bn} in the last equality. This proves \eqref{rp-sum-small}. 
Applying \Cref{l:sum-big} with $\{X_i:\,i\in[n(n+1)/2]\}=\{a_{ij}:\,1\leq i\leq j\leq n\},\, q=\frac{1}{\gamma}>\alpha$ and $\beta=\frac{2-\zeta}{\alpha}$, we have \whp,
\begin{align}
\begin{split}
    \sum_{i,j\in[n]}|a_{ij}|^{\frac1\gamma}\bbm 1\left(|a_{ij}|>n^{\frac{2-\zeta}{\alpha}}\right)=O_\delta\left(n^{\max\left\{\frac{2}{\alpha\gamma}-\left(\frac{1}{\alpha\gamma}-1\right)\zeta,\,\frac{2}{\alpha\gamma}\right\}}\right)=O_\delta\left(n^{\frac{2}{\alpha\gamma}}\right)=o\left(n^{1+\frac1\gamma}\right),
\end{split}
\end{align}
where we use \eqref{rtop3-cond} in the second and third equalities. Together with \eqref{eqn:order-bn} and \eqref{rp-sum-small}, this  proves \eqref{rp-sum}. 
\end{proof}

\subsection{Upper bound}\label{ss:main-upper}
The goal of this section is the following lemma.
\begin{proposition}\label{l:upper}
Under the assumptions of \Cref{thm:rtop3}, recall that $\gamma=\frac{r-p}{rp}$, we have \whp
\begin{align}\label{eqn:upper}
\begin{split}
    \|\mu \mathbf{1} \mathbf{1}^{\top}+ A^{\mathsf{la}}+ A^{\mathsf{int}}\|_{r \rightarrow p} \leq n^{1+\gamma} |\mu|+\gamma |\mu|^{1-\frac{1}{\gamma}} n^{\gamma-\frac{1}{\gamma}} \sum_{i, j=1}^n\left|a_{i j}\right|^{\frac{1}{\gamma}}+o\left(n^{\gamma-\frac{1}{\gamma}} b_n^{\frac{1}{\gamma}}\right).
\end{split}
\end{align}
\end{proposition}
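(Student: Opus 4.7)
The strategy is to apply the general row-sum bound of \Cref{lem:new-upper-bound} and expand the resulting inner quantity around the ``base'' matrix $\mu\mathbf{1}\mathbf{1}^\top$, whose $r\to p$ norm is $n^{1+\gamma}|\mu|$. Setting $V := \mu\mathbf{1}\mathbf{1}^\top+ A^{\mathsf{la}}+ A^{\mathsf{int}}$, $b_{ij}:=a^{\mathsf{la}}_{ij}+a^{\mathsf{int}}_{ij}$ and $J_i:=\{j\in[n]:\,b_{ij}\ne 0\}$, I would first combine \Cref{l:product0}, \Cref{l:row-sparse} and \Cref{lemma:typicalBehaviour}(b) to conclude that \whp\ $|J_i|\le \kappa+1$ and $|b_{ij}|\ge n^{(1+\eta)/\alpha}\gg|\mu|$ for each $i$ and every $j\in J_i$. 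Writing $q:=1/\gamma>1$ and $S_i:=\sum_{j\in J_i}|b_{ij}|$, the triangle inequality gives $\sum_j|v_{ij}|\le n|\mu|+S_i$, so \Cref{lem:new-upper-bound} reduces the task to proving
\[\sum_i (n|\mu|+S_i)^q \le n(n|\mu|)^q+\sum_{i,j}|a_{ij}|^{1/\gamma}+o(b_n^{1/\gamma}).\]

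The row-wise bound would proceed by splitting on whether $S_i\le n|\mu|$ (where Taylor's theorem with quadratic remainder, combined with $S_i^2\le n|\mu|S_i$, yields $(n|\mu|+S_i)^q-(n|\mu|)^q\le C_\gamma(n|\mu|)^{q-1}S_i$) or $S_i>n|\mu|$ (where the factorization $(n|\mu|+S_i)^q=S_i^q(1+n|\mu|/S_i)^q$ and the convex-secant inequality $(1+x)^q\le 1+(2^q-1)x$ on $[0,1]$ yield $(n|\mu|+S_i)^q\le S_i^q+(2^q-1)n|\mu|S_i^{q-1}$). The critical feature is that the leading piece $S_i^q$ in the second regime carries coefficient exactly $1$: for \emph{single-entry} rows ($|J_i|=1$) this is immediate since $S_i=|b_{ij^*}|$, while rows with $|J_i|\ge 2$ form a vanishingly small fraction of all large entries --- by an independent-entries union bound analogous to \Cref{l:row-sparse}, the density of large entries lying in such rows is $O(n^{-\eta})$ --- so that after invoking the subadditivity $S_i^q\le(\kappa+1)^{q-1}\sum_{j\in J_i}|b_{ij}|^q$, their aggregate contribution can be shown to be $o(\sum_{i,j}|b_{ij}|^q)$.

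Summing, every auxiliary term is then shown to be $o(b_n^{1/\gamma})$: the linear piece $(n|\mu|)^{q-1}\sum_iS_i$ is of this order by \Cref{l:sum-big} with exponent $1$ (using $(q-1)(1-2/\alpha)<0$ from $\alpha<2$); the cross-term $n|\mu|\sum_iS_i^{q-1}$ is negligible since $\sum|b_{ij}|^{q-1}=\Theta(b_n^{q-1})$ by the generalized central limit theorem while $n|\mu|/b_n\to 0$; and \Cref{l:sum-small}, applied with $\beta=(1+\eta)/\alpha$ and $q=1/\gamma$, permits replacing $\sum|b_{ij}|^q$ by $\sum|a_{ij}|^{1/\gamma}$ at cost $o(b_n^{1/\gamma})$ (the contribution from entries of magnitude $\le n^{(1+\eta)/\alpha}$ being negligible thanks to $\alpha\gamma<1$). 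Raising the resulting inequality to the $\gamma$th power and invoking $(1+x)^\gamma\le 1+\gamma x$ for $\gamma\in(0,1)$ and $x\ge 0$ --- which is asymptotically tight because the second-order term has relative size $o(1)$ under the hypothesis $\alpha>\alpha_*(\gamma)=2/(1+\gamma)$ (equivalently $b_n\ll n^{1+\gamma}$) --- together with the bookkeeping $(n(n|\mu|)^q)^\gamma=n^{1+\gamma}|\mu|$ and $(n(n|\mu|)^q)^{\gamma-1}=n^{\gamma-1/\gamma}|\mu|^{1-1/\gamma}$, then delivers the stated inequality.

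The hard part will be to secure the coefficient exactly $\gamma$ in front of $\sum|a_{ij}|^{1/\gamma}$, rather than some larger multiple $C_\gamma\gamma$. This hinges on two ingredients: (i) the sharpness of the convex-secant bound $(1+x)^q\le 1+(2^q-1)x$, which is what delivers coefficient $1$ on the $S_i^q$ piece for single-entry rows; and (ii) the heavy-tail thinning argument controlling rows with $|J_i|\ge 2$, which must be sharp enough to avoid introducing the Jensen multiplicity factor $(\kappa+1)^{q-1}$ into the leading term. This last step requires a careful analysis of the joint distribution of the top order statistics of $|a_{ij}|$, exploiting that the very largest entries almost surely lie in distinct rows while ``second-tier'' contributions are controlled by the heavy-tail exponent comparisons that underpin \Cref{l:sum-big}.
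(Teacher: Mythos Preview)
Your approach is correct in outline and reaches the same conclusion via the same two pillars (the row-sum bound of \Cref{lem:new-upper-bound} followed by the concavity inequality $(1+x)^\gamma\le 1+\gamma x$), but you are making the middle step harder than it needs to be, and the ``hard part'' you flag is in fact immediate.

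The paper splits rows not by the size of $S_i$ relative to $n|\mu|$, but by whether $(A^{\mathsf{la}}\bm 1)_i\neq 0$. The point is that \Cref{l:product0} says more than you extract from it: if a row carries a large entry, it carries \emph{no} intermediate entries at all, and by \Cref{lemma:typicalBehaviour}(b) it carries exactly one large entry. Hence for such rows $S_i=|a^{\mathsf{la}}_{ij^*}|$ \emph{on the nose}, the coefficient $1$ in front of $S_i^{1/\gamma}$ is automatic, and since $|a^{\mathsf{la}}_{ij^*}|\ge n^{(2-\zeta)/\alpha}\gg n$ one expands around $S_i$ to get $(n\mu+S_i)^{1/\gamma}\le S_i^{1/\gamma}+Cn\,S_i^{1/\gamma-1}$. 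For the complementary rows (no large entry), $S_i$ consists solely of intermediate entries, and one expands around $n\mu$ to get $(n\mu)^{1/\gamma}+Cn^{1/\gamma-1}S_i+CS_i^{1/\gamma}$; the last piece, summed over all rows, is bounded by $(\kappa)^{1/\gamma-1}\sum_{i,j}|a^{\mathsf{int}}_{ij}|^{1/\gamma}$, which is $o(b_n^{1/\gamma})$ \emph{directly} from the size cap $|a^{\mathsf{int}}_{ij}|\le n^{(2-\zeta)/\alpha}$ via \eqref{rp-sum-small}. No density argument, no thinning, no order-statistics analysis is needed: the Jensen factor on the multi-entry rows is harmless because those rows contain only intermediate entries and their total $1/\gamma$-power is already negligible.

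In short, your pooling of $A^{\mathsf{la}}$ and $A^{\mathsf{int}}$ into a single $b_{ij}$ obscures the structural dichotomy that makes the sharp coefficient trivial. Separate them, and the argument collapses to a few lines.
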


\begin{proof}In this proof, we will use $C$ to denote a positive constant that may depend on $\alpha, r, p$, and $\mu$ and whose value may vary from one occurrence to another.  Since $\|\mu\bm 1\bm 1^\T+ A^{\mathsf{int}}+ A^{\mathsf{la}}\|_{r\rightarrow p}\leq \| |\mu|\bm 1\bm 1^\T +| A^{\mathsf{la}}|+| A^{\mathsf{int}}|\|_{r\rightarrow p}$, it suffices to prove the lemma in the case that $\mu>0$ and that $ A^{\mathsf{la}}, A^{\mathsf{int}}$ contain only nonnegative entries. 
By \Cref{lem:new-upper-bound}, we have
\begin{align}\label{d:upper}
\begin{split}
    \|\mu\bm 1\bm 1^\T+ A^{\mathsf{int}}+ A^{\mathsf{la}}\|_{r\rightarrow p}\leq \mathcal U(A)\coloneqq  \left(\bm 1^\T \left((\mu\bm 1\bm 1^\T+ A^{\mathsf{int}}+ A^{\mathsf{la}})\bm 1\right)^{\frac{1}{\gamma}}\right)^{\gamma}.
\end{split}
\end{align}
Hence it suffices to show that \whp,
\begin{align}\label{upper-goal}
\begin{split}
    \mathcal U(A)\leq n^{1+\gamma} |\mu|+\gamma |\mu|^{1-\frac{1}{\gamma}} n^{\gamma-\frac{1}{\gamma}} \sum_{i, j=1}^n\left|a_{i j}\right|^{\frac{1}{\gamma}}+o\left(n^{\gamma-\frac{1}{\gamma}} b_n^{\frac{1}{\gamma}}\right).
\end{split}
\end{align}
To show this, we first claim that, we have \whp,
\begin{align}\label{upper-claim}
\begin{split}
    \left(\left(\mu\bm 1\bm 1^\T + A^{\mathsf{int}}+ A^{\mathsf{la}}\right)\bm 1\right)^{\frac{rp}{r-p}}&\leq (n\mu)^{\frac{1}{\gamma}}\bm 1+\left( A^{\mathsf{la}}\bm 1\right)^{\frac1\gamma}\\
    &\quad +Cn\left( A^{\mathsf{la}}\bm 1\right)^{\frac1\gamma-1}+Cn^{\frac1\gamma-1} A^{\mathsf{int}}\bm 1+C\left( A^{\mathsf{int}}\bm 1\right)^{\frac1\gamma},
\end{split}
\end{align}
where the inequality stands for entrywise comparison. Let $i\in[n]$ be fixed. We prove \eqref{upper-claim} by considering two cases.
\begin{enumerate}[(1)]
\item Suppose $( A^{\mathsf{la}}\bm 1)_i\neq 0$. Then by \Cref{l:product0}, we have \whp, $( A^{\mathsf{int}}\bm 1)_i=0$. By the definition of $ A^{\mathsf{la}}$ in \eqref{a-split} and the choice of $\zeta$ in \eqref{eta-para2}, it follows that
\begin{align}
\begin{split}
    ( A^{\mathsf{la}}\bm 1)_i\geq n^{\frac{2-\zeta}{\alpha}}\gg n.
\end{split}
\end{align}
Using the elementary inequality $(x+y)^q\leq x^q+Cx^{q-1}y,\,\forall x,q>0,\,0\leq y\leq \frac{x}{2}$, we have \whp,
\begin{align}\label{upper-1}
\begin{split}
    \left(\left(\mu\bm 1\bm 1^\T+ A^{\mathsf{int}}+ A^{\mathsf{la}}\right)\bm 1\right)_i^{\frac1\gamma}=\left(n\mu+\left( A^{\mathsf{la}}\bm 1\right)_i\right)^{\frac1\gamma}\leq \left( A^{\mathsf{la}}\bm 1\right)_{i}^{\frac1\gamma}+Cn\left( A^{\mathsf{la}}\bm 1\right)^{\frac1\gamma-1}.
\end{split}
\end{align}
\item Suppose $( A^{\mathsf{la}}\bm 1)_i=0$. Then by the elementary inequality $(x+y)^q\leq x^q+Cx^{q-1}y+Cy^q,\ \forall\,x,q>0,y\geq 0$, we have
\begin{align}\label{upper-2}
\begin{split}
    \left(\left(\mu\bm 1\bm 1^\T + A^{\mathsf{int}}+ A^{\mathsf{la}}\right)\bm 1\right)_i^{\frac1\gamma}&=\left(n\mu+\left( A^{\mathsf{int}}\bm 1\right)_i\right)^{\frac1\gamma}\\
    &\leq (n\mu)^{\frac1\gamma}+Cn^{\frac1\gamma-1}\left( A^{\mathsf{int}}\bm 1\right)_i+C\left( A^{\mathsf{int}}\bm 1\right)_i^{\frac1\gamma}.
\end{split}
\end{align}
\end{enumerate}
The claim \eqref{upper-claim} is complete upon combining \eqref{upper-1} and \eqref{upper-2}.

By \eqref{sparse-consequence}, \eqref{eq-01} and \eqref{upper-claim}, we have \whp,
\begin{align}\label{upper-inner}
\begin{split}
    \bm 1^\T \left(\left(\mu\bm 1\bm 1^\T+ A^{\mathsf{int}}+ A^{\mathsf{la}}\right)\bm 1\right)^{\frac1\gamma}\leq n^{\frac1\gamma+1}\mu^{\frac1\gamma}+\sum_{i,j\in[n]} (a^{\mathsf{la}}_{ij})^{\frac1\gamma}+\sum_{i=1}^3 R_i,
\end{split}
\end{align}
where 
\begin{align}
\begin{split}
    R_1\coloneqq Cn\sum_{i,j\in[n]} (a^{\mathsf{la}}_{ij})^{\frac1\gamma-1},
    \quad R_2\coloneqq Cn^{\frac1\gamma-1}\sum_{i,j\in[n]} a^{\mathsf{int}}_{ij}, 
    \quad R_3\coloneqq C\sum_{i,j\in[n]}( a^{\mathsf{int}}_{ij})^{\frac1\gamma}.
\end{split}
\end{align}
We next claim that \whp,
\begin{align}\label{upper-claim-2}
\begin{split}
    R_i=o\left(b_n^{\frac1\gamma}\right) ,\quad i=1,2,3.
\end{split}
\end{align}
The verification is as follows.
\begin{enumerate}[(1)]
\item By \Cref{l:sum-big} with $\{X_i:\,i\in[n(n+1)/2]\}=\{a_{ij}:\, 1\leq i\leq j\leq n\},\ q=\frac1\gamma-1$ and $\beta=\frac{2-\zeta}{\alpha}$, we have \whp,
\begin{align}
\begin{split}
    \sum_{i,j\in[n]} (a^{\mathsf{la}}_{ij})^{\frac1\gamma-1}&=\sum_{i,j\in[n]}a_{ij}^{\frac1\gamma-1}\bbm 1\left(a_{ij}\geq n^{\frac{2-\zeta}{\alpha}}\right)\\
    &=O_\delta\left(n^{\max\left\{\left(\frac{1}{\gamma}-1\right) \frac{2-\zeta}{\alpha}+\zeta,\,\left(\frac{1}{\gamma}-1\right) \frac{2}{\alpha}\right\}}\right)\\
    &=O_\delta\left(n^{\left(\frac1\gamma-1\right)\frac{2}{\alpha}+\zeta}\right).
\end{split}
\end{align}
Together with the choice of $\zeta$ in \eqref{eta-para2}, this implies that \whp,
\begin{align}
\begin{split}
    R_1=O_\delta\left(n^{\frac{2}{\alpha\gamma}+1-\frac{2}{\alpha}+\zeta}\right)=o\left(b_n^{\frac{1}{\gamma}}\right).
\end{split}
\end{align}
\item By \Cref{l:sum-big} with $\{X_i:\,i\in[n(n+1)/2]\}=\{a_{ij}:\,1\leq i\leq j\leq n\},\,q=1$ and $\beta=\frac{1+\eta}{\alpha}$, we have \whp,
\begin{align}
\begin{split}
    \sum_{i,j\in[n]} a^{\mathsf{int}}_{ij}&\leq\sum_{i,j\in[n]}a_{ij}\bbm 1\left(a_{ij}\geq n^{\frac{1+\eta}{\alpha}}\right)\\
    &=O_\delta\left(n^{\max\left\{\frac{1}{\alpha}+1+\eta\left(\frac{1}{\alpha}-1\right),\,\frac{2}{\alpha}\right\}}\right)\\
    &=O_\delta\left(n^{\frac{1}{\alpha}+1}\right),
\end{split}
\end{align}
where we use \eqref{rtop3-cond} in the last equality. Using \eqref{rtop3-cond} again, this implies that \whp,
\begin{align}
\begin{split}
    R_2=O_\delta\left(n^{\frac1\gamma+\frac{1}{\alpha}}\right)=o\left(b_n^{\frac1\gamma}\right).
\end{split}
\end{align}
\item By \eqref{rp-sum-small}, we have \whp, 
\begin{align}
\begin{split}
    R_3=C\sum_{i,j\in[n]}a_{ij}\bbm 1\left(a_{ij}\leq n^{\frac{2-\zeta}{\alpha}}\right) =o\left(b_n^{\frac1\gamma}\right).
\end{split}
\end{align}
\end{enumerate}
Combining \eqref{upper-inner}, \eqref{upper-claim-2}, \eqref{a-split} and \eqref{rp-sum-small}, we have \whp,
\begin{align}
\begin{split}
    \mathbf{1}^{\top}\left(\left(\mu \mathbf{1} \mathbf{1}^{\top}+ A^{\mathsf{int}}+ A^{\mathsf{la}}\right) \mathbf{1}\right)^{\frac{1}{\gamma}}&\leq n^{\frac1\gamma+1}\mu^{\frac1\gamma}+\sum_{i,j\in[n]} (a^{\mathsf{la}}_{ij})^{\frac1\gamma}+o\left(b_n^{\frac1\gamma}\right)\\
    &=n^{\frac1\gamma+1}\mu^{\frac1\gamma}+\sum_{i,j\in[n]}a_{ij}^{\frac1\gamma}+o\left(b_n^{\frac1\gamma}\right).
\end{split}
\end{align}
Together with \eqref{rp-sum}, the definition of $\mathcal U(A)$ in \eqref{d:upper}, the fact that $\gamma=\frac{r-p}{rp}\in(0,1]$, and the elementary inequality $(x+y)^{q}\leq x^q+q x^{q-1}y$ for all $x>0,\,q\in(0,1]$ and $y\geq 0$, this implies that \whp, 
\begin{align}
\begin{split}
    \mathcal U(A)&\leq \left(n^{\frac1\gamma+1}\mu^{\frac1\gamma}+\sum_{i,j\in[n]}a_{ij}^{\frac1\gamma}+o\left(b_n^{\frac1\gamma}\right)\right)^{\gamma}\\
    &\leq n^{1+\gamma}\mu+\gamma n^{\gamma-\frac{1}{\gamma}}\mu^{1-\frac{1}{\gamma}}\sum_{i,j\in[n]}a_{ij}^{\frac{1}{\gamma}}+o\left(n^{\gamma-\frac{1}{\gamma}}b_n^{\frac{1}{\gamma}}\right).
\end{split}
\end{align}
This establishes \eqref{upper-goal}, and hence completes the proof of the lemma.
\end{proof}

\subsection{Lower bound}\label{ss:main-lower}
This section is devoted to the proof of the following proposition.
\begin{proposition}\label{l:lower}
Under the assumptions of \Cref{thm:rtop3}, recall that $\gamma=\frac{r-p}{rp}$, we have \whp
\begin{align}
\begin{split}
    \|\mu\bm 1\bm 1^\T+ A^{\mathsf{int}}+ A^{\mathsf{la}}\|_{r\rightarrow p}\geq n^{1+\gamma} |\mu|+\gamma |\mu|^{1-\frac{1}{\gamma}} n^{\gamma-\frac{1}{\gamma}} \sum_{i, j=1}^n\left|a_{i j}\right|^{\frac{1}{\gamma}}+o\left(n^{\gamma-\frac{1}{\gamma}} b_n^{\frac{1}{\gamma}}\right).
\end{split}
\end{align}
\end{proposition}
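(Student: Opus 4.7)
My plan is to exhibit explicit test vectors $\bm x, \bm y$ with $\|\bm x\|_r = \|\bm y\|_{p^*} = 1$ and apply the dual formulation \eqref{eq:rtop-dual} to get $\|\mu\bm 1\bm 1^\T+ A^{\mathsf{la}}+ A^{\mathsf{int}}\|_{r\rightarrow p} \geq \bm y^\T(\mu\bm 1\bm 1^\T+ A^{\mathsf{la}}+ A^{\mathsf{int}})\bm x$, then show the latter achieves the target. Without loss of generality I assume $\mu>0$ (replacing $A_n$ by $-A_n$ otherwise). Motivated by Boyd's nonlinear power method and by the equality case of the H\"older chain \eqref{less-instance} used in \Cref{lem:new-upper-bound}, I would take
\begin{align*}
x_j = c_x\,\bigl|\mu n + ( A^{\mathsf{la}}\bm 1)_j\bigr|^{1/(r-1)}, \qquad y_i = c_y\,\sigma_i\,\bigl|\mu n + ( A^{\mathsf{la}}\bm 1)_i\bigr|^{1/(p^*-1)},
\end{align*}
where $\sigma_i \coloneqq \sgn(z_{k(i)})$ for $i \in S \coloneqq \{i:( A^{\mathsf{la}}\bm 1)_i\neq 0\}$ (with $z_{k(i)}$ the unique nonzero entry in row $i$) and $\sigma_i\coloneqq 1$ otherwise, while $c_x, c_y > 0$ are chosen to enforce the normalizations. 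The sign vector $\bm\sigma$ ensures that on each sparse pair $(i_k,j_k)$ the product $z_k x_{j_k} y_{i_k}$ is positive, so the $ A^{\mathsf{la}}$-contribution adds constructively with magnitude proportional to $\sum_k|z_k|^{1/\gamma}$.

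The computation of $\bm y^\T(\mu\bm 1\bm 1^\T + A^{\mathsf{la}}+ A^{\mathsf{int}})\bm x$ proceeds by splitting $[n]=S\sqcup S^c$. On $S^c$ the vectors are constant, equal to $c_x(\mu n)^{1/(r-1)}$ and $c_y(\mu n)^{1/(p^*-1)}$, while $|S|=O_\delta(n^\zeta)$ by \Cref{lemma:num-large-entry}. This split allows one to compute $\|\bm x\|_r^r$, $\|\bm y\|_{p^*}^{p^*}$, $\bm 1^\T\bm x$, $\bm 1^\T\bm y$ explicitly: each is of the form (bulk) $+$ (sparse correction), where the bulk gives the dominant $n(\mu n)^{q}$ piece and the sparse correction involves sums $\sum_k|z_k|^{q}$ with $q\in\{r^*,p,1/(r-1),1/(p^*-1)\}$. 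A variant of \Cref{l:sum-big} applied with the appropriate exponent (each strictly less than $1/\gamma$) shows these sparse corrections are of smaller order than the bulk in relative terms under \eqref{rtop3-cond}, so Taylor expansion of $(1+\epsilon)^{-1/r}$ is justified.

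The three pieces then evaluate as follows: (i) the $\mu\bm 1\bm 1^\T$-contribution gives $\mu(\bm 1^\T\bm y)(\bm 1^\T\bm x) = \mu n^{1+\gamma}$ at leading order, plus sub-leading corrections coming from $\bm 1^\T\bm x, \bm 1^\T\bm y$ and the normalizations $c_x, c_y$; (ii) the $ A^{\mathsf{la}}$-contribution, using $ A^{\mathsf{la}}_{i_k, j_k} = A^{\mathsf{la}}_{j_k, i_k} = z_k$, simplifies to $\sum_k z_k(y_{i_k}x_{j_k}+y_{j_k}x_{i_k})$, which by the identity $1+\tfrac{1}{r-1}+\tfrac{1}{p^*-1}=\tfrac{1}{\gamma}$ together with $|z_k|\gg\mu n$ whp equals $2c_xc_y\sum_k|z_k|^{1/\gamma}(1+o(1))$; (iii) the $ A^{\mathsf{int}}$-contribution, exploiting near-uniformity of $\bm x, \bm y$ on $S^c$ together with \Cref{l:sum-big} applied with $q=1$, is $o(n^{\gamma-1/\gamma}b_n^{1/\gamma})$ under \eqref{rtop3-cond}. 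Finally, the factor $c_xc_y\sim n^{-1/r-1/p^*}(\mu n)^{1-1/\gamma}$ combines with $2\sum_k|z_k|^{1/\gamma} = \sum_{ij}| a^{\mathsf{la}}_{ij}|^{1/\gamma} = \sum_{ij}|a_{ij}|^{1/\gamma} + o(b_n^{1/\gamma})$ (using \eqref{rp-sum-small}), and the expansion $(X+Y)^\gamma \geq X^\gamma + \gamma X^{\gamma-1}Y - O(Y^2/X^{2-\gamma})$ applied implicitly through the normalization corrections produces the coefficient $\gamma|\mu|^{1-1/\gamma}n^{\gamma-1/\gamma}$ in front of $\sum_{ij}|a_{ij}|^{1/\gamma}$.

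The hard part will be the bookkeeping of sub-leading corrections in step (i): several Taylor terms coming from $c_x$, $c_y$, $\bm 1^\T\bm x$, and $\bm 1^\T\bm y$ must conspire to yield exactly the coefficient $\gamma$ in front of the fluctuation term, matching the upper bound of \Cref{l:upper}. A secondary delicacy is controlling the signed sums $\sum_k\sigma_k|z_k|^{1/(p^*-1)}$ arising in $\bm 1^\T\bm y$: these have mean zero by the symmetry of the heavy-tailed law, and by the generalized central limit theorem fluctuate at a scale that I expect to be strictly smaller than $n^{\gamma-1/\gamma}b_n^{1/\gamma}$ under \eqref{rtop3-cond}, but this comparison must be verified directly using the exponent inequalities defining $\bar\alpha_{*,2}$.
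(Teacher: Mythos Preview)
Your overall strategy---exhibit test vectors in the dual formulation \eqref{eq:rtop-dual}, split into bulk and sparse parts, and Taylor-expand the normalizations---matches the paper's approach and is sound in outline. However, your specific choice of exponents is wrong, and this is not a minor bookkeeping slip: it causes the second-order term to come out with the wrong structure.

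Concretely, the identity you invoke, $1+\frac{1}{r-1}+\frac{1}{p^*-1}=\frac{1}{\gamma}$, is false. Since $\frac{1}{p^*-1}=p-1$, your left-hand side equals $p+\frac{1}{r-1}$, while $\frac{1}{\gamma}=\frac{rp}{r-p}$; these differ already for $r=4,\,p=2$ ($\tfrac{7}{3}\neq 4$). The exponents that actually make the argument close are $a=\frac{p}{r-p}$ for $\bm x$ and $b=\frac{r(p-1)}{r-p}$ for $\bm y$: these are the equality cases of the H\"older chain \eqref{less-instance} (compare \eqref{opt-x}--\eqref{opt-y}) and the fixed point of Boyd's iteration discussed in \Cref{rem:lbound}. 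They are uniquely determined by the three conditions $1+a+b=ar=bp^*=\frac{1}{\gamma}$, which force the $ A^{\mathsf{la}}$ contribution, the $\|\bm x\|_r^r$ correction, and the $\|\bm y\|_{p^*}^{p^*}$ correction all to involve the \emph{same} sum $\sum_k|z_k|^{1/\gamma}$. With your exponents, the three sums have exponents $p+\frac{1}{r-1}$, $r^*$, and $p$ respectively; they do not collapse, and the resulting lower bound does not reproduce the coefficient $\gamma|\mu|^{1-1/\gamma}n^{\gamma-1/\gamma}$ required to match \Cref{l:upper}.

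A secondary issue: you claim the signed sums $\sum_k\sigma_k|z_k|^{p-1}$ in $\bm 1^\T\bm y$ have mean zero ``by the symmetry of the heavy-tailed law,'' but no such symmetry is assumed (only the tail of $|a_{ij}|$ is controlled). The paper sidesteps this by placing signs asymmetrically in $j<i$ versus $j>i$ in both $\hat{\bm x}$ and $\hat{\bm y}$, so that each pair contributes positively to $\hat{\bm y}^\T A^{\mathsf{la}}\hat{\bm x}$ without relying on distributional symmetry. Once you fix the exponents to $\frac{p}{r-p}$ and $\frac{r(p-1)}{r-p}$ and adopt a sign scheme that does not assume symmetry, the rest of your plan goes through essentially as in the paper.
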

 
 The  ansatz for this lower bound was arrived at by taking inspiration from the nonlinear power method; see \Cref{rem:lbound} for further details.  First, in  
 \Cref{s:proof-lbound}, we present its proof.

\subsubsection{Proof of the lower bound}
\label{s:proof-lbound}

Define $\hat{\bm x}=(\hat x_1,\ldots,\hat x_n),\hat{\bm y}=(\hat y_1,\ldots,\hat y_n)$ such that, for $i\in[n]$,
\begin{align}\label{hatxy}
\begin{split}
    &\hat x_i\coloneqq(n |\mu|)^{\frac{p}{r-p}}\sgn(\mu)+\sum_{j:\,j<i}|  a^{\mathsf{la}}_{ij}|^{\frac{p}{r-p}}+\sum_{j:\,j>i}\psi_{\frac{r}{r-p}}( a^{\mathsf{la}}_{ij}), \\
    &\hat y_i\coloneqq(n |\mu|)^{\frac{r(p-1)}{r-p}}+\sum_{j:\,j<i}|  a^{\mathsf{la}}_{ij}|^{\frac{r(p-1)}{r-p}}+\sum_{j:\,j>i}\psi_{\frac{p(r-1)}{r-p}}( a^{\mathsf{la}}_{ij}),
\end{split}
\end{align}
where $\psi_{\frac{r}{r-p}},\psi_{\frac{p(r-1)}{r-p}}$ are the functionals defined in \eqref{def:psi}. 
Consider
\begin{align}\label{d:lower}
\begin{split}
    \mathcal L(A)\coloneqq \frac{\mathcal L_1(A)}{\mathcal L_2(A)},
\end{split}
\end{align}
where
\begin{align}\label{l1-l2}
\begin{split}
    &\mathcal L_1(A)\coloneqq \hat{\bm y}^\T\left(\mu\bm 1\bm 1^\T+ A^{\mathsf{int}}+ A^{\mathsf{la}}\right)\hat{\bm x},\\
    &\mathcal L_2(A)\coloneqq \|\hat{\bm y}\|_{p^*}\|\hat{\bm x}\|_r.
\end{split}
\end{align}
By the dual formulation \eqref{eq:rtop-dual} of the $r\rightarrow p$ norm,
we have
\begin{align}
\begin{split}
    \mathcal L(A)\leq \|\mu\bm 1\bm 1^\T+ A^{\mathsf{int}}+ A^{\mathsf{la}}\|_{r\rightarrow p}.
\end{split}
\end{align}
To prove \Cref{l:lower}, it suffices to establish the following lemma.
\begin{lemma}\label{l:lower1}
Under the assumptions of \Cref{thm:rtop3}, recall that $\gamma=\frac{r-p}{rp}$, we have \whp,
\begin{align}
\begin{split}
    \mathcal L(A)\geq n^{1+\gamma} |\mu|+\gamma |\mu|^{1-\frac{1}{\gamma}} n^{\gamma-\frac{1}{\gamma}} \sum_{i, j=1}^n\left|a_{i j}\right|^{\frac{1}{\gamma}}+o\left(n^{\gamma-\frac{1}{\gamma}} b_n^{\frac{1}{\gamma}}\right).
\end{split}
\end{align}
\end{lemma}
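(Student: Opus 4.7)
The strategy is to compute the numerator $\mathcal L_1(A)$ and denominator $\mathcal L_2(A)$ separately, each to accuracy sufficient to capture the fluctuation scale $n^{\gamma-1/\gamma}b_n^{1/\gamma}$ after taking the ratio. The vectors $\hat{\bm x},\hat{\bm y}$ in \eqref{hatxy} are essentially one step of Boyd's nonlinear power method applied to $\mu\bm 1\bm 1^\T+ A^{\mathsf{la}}$ started from $(\bm 1,\bm 1)$, truncated to retain only the largest-entry contributions, so by design the leading behavior of $\mathcal L_1/\mathcal L_2$ matches the upper bound of \Cref{l:upper}.

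The structural input is that, by \Cref{lemma:typicalBehaviour}(b) combined with the threshold $n^{(2-\zeta)/\alpha}$ in \eqref{a-split}, each row of $ A^{\mathsf{la}}$ has at most one nonzero entry \whp, and by \Cref{lemma:num-large-entry} (applied with $\theta=1-\zeta/2$), the set $R_{\mathsf{la}}\coloneqq\{i\in[n]:\exists\, j,\, a^{\mathsf{la}}_{ij}\neq 0\}$ satisfies $|R_{\mathsf{la}}|=O_\delta(n^\zeta)\ll n$. Thus, on rows $i\notin R_{\mathsf{la}}$ the vectors take the constant values $\hat x_i=\sgn(\mu)A$ and $\hat y_i=A'$ with $A\coloneqq(n|\mu|)^{p/(r-p)}$ and $A'\coloneqq(n|\mu|)^{r(p-1)/(r-p)}$, while on rows $i\in R_{\mathsf{la}}$, writing $j^*_i$ for the unique column with $ a^{\mathsf{la}}_{ij^*_i}\neq 0$, each entry is the constant plus a single signed heavy-tailed correction of size $B_i\coloneqq|a^{\mathsf{la}}_{ij^*_i}|^{p/(r-p)}$ or $B'_i\coloneqq|a^{\mathsf{la}}_{ij^*_i}|^{r(p-1)/(r-p)}$ that dominates the constant since $n^{(2-\zeta)/\alpha}\gg n$.

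To compute $\mathcal L_1(A)=\mu(\bm 1^\T\hat{\bm y})(\bm 1^\T\hat{\bm x})+\hat{\bm y}^\T A^{\mathsf{la}}\hat{\bm x}+\hat{\bm y}^\T A^{\mathsf{int}}\hat{\bm x}$, I would (i) expand the rank-one term to obtain the dominant contribution $n^{1+1/\gamma}|\mu|^{1/\gamma}$ plus signed corrections coming from $R_{\mathsf{la}}$; (ii) reduce $\hat{\bm y}^\T A^{\mathsf{la}}\hat{\bm x}$ via the single-nonzero-per-row structure and the identity $t\cdot\psi_{r/(r-p)}(t)=|t|^{rp/(r-p)}=|t|^{1/\gamma}$ (and its analogue for the $\hat{\bm y}$-side exponent $p(r-1)/(r-p)$) to a pairwise sum proportional to $\sum_{i,j}|a^{\mathsf{la}}_{ij}|^{1/\gamma}$ weighted by the appropriate powers of $(n|\mu|)$, which by \eqref{rp-sum-small} equals $\sum_{i,j}|a_{ij}|^{1/\gamma}+o(b_n^{1/\gamma})$; and (iii) bound $\hat{\bm y}^\T A^{\mathsf{int}}\hat{\bm x}$ by observing that, since $\hat{\bm x},\hat{\bm y}$ are constant on $[n]\setminus R_{\mathsf{la}}$, this form equals $\sgn(\mu)A\cdot A'\cdot\bm 1^\T A^{\mathsf{int}}\bm 1$ plus a correction involving only the $O_\delta(n^\zeta)$ exceptional rows. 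The leading piece is controlled by \Cref{l:sum-big} applied with $q=1,\beta=(1+\eta)/\alpha$, while the correction is controlled using \Cref{l:row-sparse}; together these show $\hat{\bm y}^\T A^{\mathsf{int}}\hat{\bm x}=o(n^{1-\gamma}|\mu|^{1/\gamma-1}b_n^{1/\gamma})$, so its contribution to $\mathcal L(A)=\mathcal L_1/\mathcal L_2$ is $o(n^{\gamma-1/\gamma}b_n^{1/\gamma})$.

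For $\mathcal L_2(A)=\|\hat{\bm y}\|_{p^*}\|\hat{\bm x}\|_r$, I would apply the expansion $|\sgn(\mu)A+s_i B_i|^r=B_i^r+r\sgn(\mu)s_i A B_i^{r-1}+O(A^2 B_i^{r-2})$ for $i\in R_{\mathsf{la}}$ (valid since $B_i\gg A$) to obtain $\|\hat{\bm x}\|_r^r=n(n|\mu|)^{1/\gamma}+\sum_{i,j}|a^{\mathsf{la}}_{ij}|^{1/\gamma}+o(b_n^{1/\gamma})$, and similarly $\|\hat{\bm y}\|_{p^*}^{p^*}=n(n|\mu|)^{1/\gamma}+\sum_{i,j}|a^{\mathsf{la}}_{ij}|^{1/\gamma}+o(b_n^{1/\gamma})$, where the first-order $A B_i^{r-1}$ and $A' (B'_i)^{p^*-1}$ contributions are subleading relative to $\sum|a^{\mathsf{la}}_{ij}|^{1/\gamma}$. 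Taking $r$-th and $p^*$-th roots, Taylor-expanding $(1+x)^{1/q}=1+x/q+O(x^2)$, using $1/r+1/p^*=1-\gamma$, and combining with the analysis of $\mathcal L_1$, the leading terms produce $n^{1+\gamma}|\mu|$ while the first-order corrections collapse to $\gamma|\mu|^{1-1/\gamma}n^{\gamma-1/\gamma}\sum_{i,j}|a_{ij}|^{1/\gamma}+o(n^{\gamma-1/\gamma}b_n^{1/\gamma})$ after using \eqref{rp-sum-small} to replace $ A^{\mathsf{la}}$ by $A$. The main technical obstacle will be the careful sign bookkeeping in the $\hat{\bm y}^\T A^{\mathsf{la}}\hat{\bm x}$ term (since the signs $s_i,s'_i$ are correlated with $\sgn( a^{\mathsf{la}}_{ij^*_i})$) and verifying that after combining the rank-one contribution, the $ A^{\mathsf{la}}$-bilinear contribution, and the first-order $\mathcal L_2$ corrections, the resulting coefficient in front of $\sum_{i,j}|a_{ij}|^{1/\gamma}$ is exactly $\gamma|\mu|^{1-1/\gamma}n^{\gamma-1/\gamma}$ (matching \Cref{l:upper}), rather than an order-correct but numerically incorrect multiple — this is where the specific exponents $p/(r-p)$ and $r(p-1)/(r-p)$ in \eqref{hatxy} are crucial.
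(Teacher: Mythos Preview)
Your proposal is correct and follows essentially the same three-step strategy as the paper (lower-bound $\mathcal L_1$, upper-bound $\mathcal L_2$, Taylor-expand the ratio using $1/r+1/p^*=1-\gamma$). One simplification you may have overlooked: for step (iii), the paper invokes \Cref{l:product0} --- rows carrying a nonzero $ A^{\mathsf{la}}$ entry carry no $ A^{\mathsf{int}}$ entries \whp\ --- so the correction you plan to bound via \Cref{l:row-sparse} is in fact identically zero and only the term $\sgn(\mu)A\cdot A'\cdot\bm 1^\top A^{\mathsf{int}}\bm 1$ survives; also, since only a \emph{lower} bound on $\mathcal L$ is needed, one-sided inequalities on $\|\hat{\bm x}\|_r^r$ and $\|\hat{\bm y}\|_{p^*}^{p^*}$ suffice, so your full two-sided expansion there is more than necessary.
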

\begin{proof}In this proof, we will use $C$ to denote a positive constant that may depend on $\alpha, r, p$, and $\mu$ and whose value may vary from one occurrence to another. Observe that since $\|\mu\bm 1\bm 1^\T+ A^{\mathsf{int}}+ A^{\mathsf{la}}\|_{r\rightarrow p}=\|-\mu\bm 1\bm 1^\T- A^{\mathsf{int}}- A^{\mathsf{la}}\|_{r\rightarrow p}$, we may assume $\mu>0$ for the rest of the proof. We proceed in three steps. 

\noindent\emph{Step 1: Estimation of $\mathcal L_1(A)$.}
Expanding the definition of $\mathcal L_1$ in \eqref{l1-l2}, we see that
\begin{align}\label{eq501}
\begin{split}
    \mathcal L_1(A)=\mu\bm 1^\T\hat{\bm x}\hat{\bm y}^\T \bm 1 + \hat{\bm y}^\T A^{\mathsf{int}}\hat{\bm x}+\hat{\bm y}^\T A^{\mathsf{la}}\hat{\bm x}.
\end{split}
\end{align}
By the definitions of $\hat{\bm x},\hat{\bm y}$ in \eqref{hatxy}, we have \whp,
\begin{align}\label{eq502}
\begin{split}
    \mu\bm 1^\T\hat{\bm x}\hat{\bm y}^\T \bm 1 
    &\geq n^{\frac{rp}{r-p}+1}\mu^{\frac{rp}{r-p}}-Cn^{\frac{r}{r-p}}\sum_{i,j\in[n]}|  a^{\mathsf{la}}_{ij}|^{\frac{rp-r}{r-p}}-Cn^{\frac{rp-p}{r-p}}\sum_{i,j\in[n]}|  a^{\mathsf{la}}_{ij}|^{\frac{p}{r-p}}\\
    &\quad-C\sum_{i,j\in[n]}|  a^{\mathsf{la}}_{ij}|^{\frac{rp-r}{r-p}}\sum_{i,j\in[n]}| a^{\mathsf{la}}_{ij} |^{\frac{p}{r-p}}.
\end{split}
\end{align}
Furthermore, \eqref{hatxy} and \Cref{l:product0} imply that \whp,
\begin{align}\label{eq504}
\begin{split}
    \hat{\bm y}^\T  A^{\mathsf{int}}\hat{\bm x}=\left(n|\mu|\right)^{\frac{rp}{r-p}-1}\sgn(\mu)\bm 1^\T  A^{\mathsf{int}}\bm 1\geq -Cn^{\frac{rp}{r-p}-1}\sum_{i,j\in[n]}\left| a^{\mathsf{int}}_{ij}\right|.
\end{split}
\end{align}
For the last term in \eqref{eq501}, we first write it as
\begin{align}
\begin{split}
    \hat{\bm y}^\T  A^{\mathsf{la}}\hat{\bm x}=\sum_{i,j\in[n]} a^{\mathsf{la}}_{ij}\hat x_i\hat y_j,
\end{split}
\end{align}
and analyze the contribution of each summand by considering the following two cases:
\begin{enumerate}
\item When $ a^{\mathsf{la}}_{ij}=0$, the contribution is trivially $0$.
\item Suppose $ a^{\mathsf{la}}_{ij}\neq 0$. By \Cref{lemma:typicalBehaviour} item (a), \eqref{eta-para2} and \eqref{a-split}, we have \whp, $i\neq j$. We only consider the case $i<j$, the case $j<i$ can be analyzed in the same way with the same conclusion. By \Cref{lemma:typicalBehaviour} item (b), $ a^{\mathsf{la}}_{ij}$ is the only nonzero entry on $i$-th row and $j$-th column. Together with the symmetry of $ A^{\mathsf{la}}$, we have \whp, 
\begin{align}
\begin{split}
    \hat x_i = (n\left|\mu\right|)^{\frac{p}{r-p}}\sgn(\mu) + |  a^{\mathsf{la}}_{ij}|^{\frac{p}{r-p}}\sgn( a^{\mathsf{la}}_{ij}),\qquad\hat y_j=(n\left|\mu\right|)^{\frac{r(p-1)}{r-p}}+| a^{\mathsf{la}}_{ij} |^{\frac{r(p-1)}{r-p}}.
\end{split}
\end{align}
Therefore, it follows that 
\begin{align}
\begin{split}
     a^{\mathsf{la}}_{ij}x_iy_j \geq |  a^{\mathsf{la}}_{ij}|^{\frac{rp}{r-p}}-Cn^{\frac{p}{r-p}}| a^{\mathsf{la}}_{ij}|^{\frac{p(r-1)}{r-p}}-Cn^{\frac{r(p-1)}{r-p}}| a^{\mathsf{la}}_{ij}|^{\frac{r}{r-p}}-Cn^{\frac{rp}{r-p}-1}\left| a^{\mathsf{la}}_{ij}\right|.
\end{split}
\end{align}
\end{enumerate}

Combining the above two cases, we have \whp,
\begin{align}\label{eq503}
\begin{split}
    \hat{\bm y}^{\T} A^{\mathsf{la}}\hat{\bm x}&\geq \sum_{i,j\in[n]}\left| a^{\mathsf{la}}_{ij} \right|^{\frac{rp}{r-p}}-Cn^{\frac{p}{r-p}}\sum_{i,j\in[n]}| a^{\mathsf{la}}_{ij}|^{\frac{rp-p}{r-p}}-Cn^{\frac{rp-r}{r-p}}\sum_{i,j\in[n]}| a^{\mathsf{la}}_{ij}|^{\frac{r}{r-p}}\\
    &\quad-Cn^{\frac{rp}{r-p}-1}\sum_{i,j\in[n]}| a^{\mathsf{la}}_{ij}|.
\end{split}
\end{align}

Inserting \eqref{eq502}, \eqref{eq504} and \eqref{eq503} into \eqref{eq501} and recall that $\gamma=\frac{r-p}{rp}$, we have \whp,
\begin{align}\label{eq507}
\begin{split}
    \mathcal L_1(A)\geq n^{\frac{1}{\gamma}+1}\mu^{\frac1\gamma}+\sum_{i,j\in[n]}\left| a^{\mathsf{la}}_{ij} \right|^{\frac1\gamma}-\sum_{i=1}^7R_i',
\end{split}
\end{align}
where
\begin{alignat}{2}
    &R_1'\coloneqq Cn^{\frac{r}{r-p}}\sum_{i,j\in[n]}| a^{\mathsf{la}}_{ij}|^{\frac{rp-r}{r-p}}, & \quad &R_2'\coloneqq Cn^{\frac{rp-p}{r-p}}\sum_{i,j\in[n]}| a^{\mathsf{la}}_{ij}|^{\frac{p}{r-p}},\\
    &R_3'\coloneqq Cn^{\frac{p}{r-p}}\sum_{i,j\in[n]}| a^{\mathsf{la}}_{ij}|^{\frac{rp-p}{r-p}}, &\quad &R_4'\coloneqq Cn^{\frac{rp-r}{r-p}}\sum_{i,j\in[n]}| a^{\mathsf{la}}_{ij}|^{\frac{r}{r-p}},\\
    &R_5'\coloneqq \left(n\mu\right)^{\frac{rp}{r-p}-1}\sum_{i,j\in[n]}\left| a^{\mathsf{la}}_{ij}\right|,&\quad &R_6'\coloneqq \left(n\mu\right)^{\frac{rp}{r-p}-1}\sum_{i,j\in[n]}\left| a^{\mathsf{int}}_{ij}\right|,
\end{alignat}
and
\begin{equation}
R_7'\coloneqq C\sum_{i,j\in[n]}|  a^{\mathsf{la}}_{ij}|^{\frac{rp-r}{r-p}}\sum_{i,j\in[n]}| a^{\mathsf{la}}_{ij} |^{\frac{p}{r-p}}.
\end{equation}

We now claim that \whp,
\begin{align}\label{eq508}
\begin{split}
    R_i'=o\left(b_n^{\frac1\gamma}\right),\quad 1\leq i\leq 7.
\end{split}
\end{align}
This can be verified as follows:
\begin{enumerate}[(1)]
\item By the definition of $ a^{\mathsf{la}}_{ij}$ in \eqref{a-split} and \Cref{l:sum-big} with $q=\frac{rp-r}{r-p}$ and $\beta=\frac{2-\zeta}{\alpha}$, we have \whp
\begin{align}\label{eq505}
\begin{split}
    \sum_{i,j\in[n]}| a^{\mathsf{la}}_{ij}|^{\frac{rp-r}{r-p}}=\sum_{i,j\in[n]}|a_{ij}|^{\frac{rp-r}{r-p}}\bbm 1\left\{|a_{ij}|>n^{\frac{2-\zeta}{\alpha}}\right\}=O_\delta\left(n^{\frac{2}{\alpha}\frac{rp-r}{r-p}+\zeta}\right).
\end{split}
\end{align}
Hence, the choice of $\zeta$ in \eqref{eta-para2}, $\alpha<2$ and \eqref{eqn:order-bn} imply that \whp,
\begin{align}
\begin{split}
    R_1'= O_\delta\left(n^{\frac{2}{\alpha}\frac{rp-r}{r-p}+\frac{r}{r-p}+\zeta}\right)=O_{\delta}\left(n^{\frac{2}{\alpha\gamma}+\left(2-\frac{2}{\alpha}-\frac{\alpha}{2}\right)\frac{p-1}{r}}\right) =o\left(b_n^{\frac1\gamma}\right).
\end{split}
\end{align}
The estimation of $R_2',R_3',R_4',R_5'$ and $R_7'$ are similar.
\item By the definition of $ a^{\mathsf{int}}_{ij}$ in \eqref{a-split} and \Cref{l:sum-big} with $q=1$ and $\beta=\frac{1+\eta}{\alpha}$, we have \whp,
\begin{align}
\begin{split}
    \sum_{i,j\in[n]}| a^{\mathsf{int}}_{ij}|=\sum_{i,j\in[n]}|a_{ij}|\bbm 1\left\{|a_{ij}|\geq n^{\frac{1+\eta}{\alpha}} \right\}=O_\delta\left(n^{\frac{1}{\alpha}+1}\right).
\end{split}
\end{align}
Hence, \eqref{rtop3-cond}, together with \eqref{eqn:order-bn}, implies that \whp,
\begin{align}
\begin{split}
    R_6'=O_\delta\left(n^{\frac{1}{\alpha}+1}\right)=o\left(b_n^{\frac1\gamma}\right).
\end{split}
\end{align}
\end{enumerate}
Combining \eqref{rp-sum-small}, \eqref{eq507} and \eqref{eq508}, we have established that \whp,
\begin{align}\label{l1}
\begin{split}
    \mathcal L_1(A)\geq n^{\frac1\gamma+1}\mu^{\frac1\gamma}+\sum_{i,j\in[n]}|a_{ij}|^{\frac1\gamma}+o\left(b_n^{\frac1\gamma}\right).
\end{split}
\end{align}

\noindent\emph{Step 2: Estimation of $\mathcal L_2(A)$.} Fix $i\in[n]$. We estimate the $r$-th power of $\hat x_i$, considering two cases:
\begin{enumerate}
\item Suppose there exists $i^*\in[n]$ such that $ a^{\mathsf{la}}_{ii^*}\neq 0$. Then by \eqref{a-split} and the choice of $\zeta$ in \eqref{eta-para2}, we have \whp,
\begin{align}\label{eq511}
\begin{split}
    \left| a^{\mathsf{la}}_{ii^*}\right|^{\frac{p}{r-p}} \geq(n \mu)^{\frac{p}{r-p}}.
\end{split}
\end{align}
By \Cref{lemma:typicalBehaviour} (b), \eqref{eta-para2} and \eqref{a-split}, we have \whp, $ a^{\mathsf{la}}_{ij}=0$ for all $j\neq i^*$. 
Together with definition of $\hat x$ in \eqref{hatxy}, \eqref{eq511} and the elementary inequality $(x+y)^q\leq x^q+Cx^{q-1}y$ for all $x,q>0$ and $0\leq y\leq x$, this implies that \whp,
\begin{align}\label{eq509}
\begin{split}
    \left|\hat{x}_i\right|^r \leq\sum_{j\in[n]}| a^{\mathsf{la}}_{ij}|^{\frac{1}{\gamma}}+C n^{\frac{p}{r-p}}\sum_{j\in[n]}| a^{\mathsf{la}}_{ij}|^{\frac{p(r-1)}{r-p}} .
\end{split}
\end{align}
\item Suppose $ a^{\mathsf{la}}_{ij}=0$ for all $j\in[n]$. Then by definition of $\hat x$ in \eqref{hatxy}, we have
\begin{align}\label{eq510}
\begin{split}
    \left|\hat{x}_i\right|^r=(n \mu)^{\frac{1}{\gamma}} .
\end{split}
\end{align}
\end{enumerate}
Combining \eqref{eq509} and \eqref{eq510}, we have \whp,
\begin{align}\label{eq512}
\begin{split}
    \|\hat{\bm x}\|_r^r\leq n^{\frac{1}{\gamma}+1}\mu^{\frac{1}{\gamma}}+\sum_{i,j\in[n]}| a^{\mathsf{la}}_{ij}|^{\frac{1}{\gamma}}+Cn^{\frac{p}{r-p}}\sum_{i,j\in[n]}| a^{\mathsf{la}}_{ij}|^{\frac{p(r-1)}{r-p}}.
\end{split}
\end{align}
By \Cref{l:sum-big} with $q=\frac{p(r-1)}{r-p}$ and $\beta=\frac{2-\zeta}{\alpha}$, the choice of $\zeta$ in \eqref{eta-para2} and \eqref{eqn:order-bn}, we have \whp,
\begin{align}
\begin{split}
    n^{\frac{p}{r-p}}\sum_{i,j\in[n]}\left| a^{\mathsf{la}}_{ij}\right|^{\frac{p(r-1)}{r-p}}&=n^{\frac{p}{r-p}}\sum_{i, j \in[n]}\left|a_{i j}\right|^{\frac{p(r-1)}{r-p}} \bbm{1}\left\{\left|a_{i j}\right|>n^{\frac{2-\zeta}{\alpha}}\right\}\\
    &=O_\delta\left(n^{\frac{2}{\alpha} \frac{p(r-1)}{r-p}+\frac{p}{r-p}+\zeta}\right)\\
    &=O_\delta\left(n^{\frac{2}{\alpha\gamma}+\left(2-\frac{2}{\alpha}-\frac{\alpha}{2}\right)\frac{p-1}{r}}\right)\\
    &=o\left(b_n^{\frac{1}{\gamma}}\right).
\end{split}
\end{align}
Together with \eqref{rp-sum-small} and \eqref{eq512}, this implies that \whp,
\begin{align}\label{eq514}
\begin{split}
    \|\hat{\bm x}\|_r^r\leq n^{\frac{1}{\gamma}+1} \mu^{\frac{1}{\gamma}}+\sum_{i,j\in[n]}|a_{ij}|^{\frac1\gamma}+o\left(b_n^{\frac{1}{\gamma}}\right) .
\end{split}
\end{align}
Similarly, we have \whp,
\begin{align}\label{eq515}
\begin{split}
    \|\hat{\bm y}\|_{p^*}^{p^*}\leq n^{\frac{1}{\gamma}+1} \mu^{\frac{1}{\gamma}}+\sum_{i,j\in[n]}|a_{ij}|^{\frac1\gamma}+o\left(b_n^{\frac{1}{\gamma}}\right) .
\end{split}
\end{align}
Together with \eqref{rp-sum}, the definition of $\mathcal L_2(A)$ in \eqref{l1-l2}, the elementary inequality $(x+y)^q\geq x^q+qx^{q-1}y$ for all $x>0,q\leq 0$ and $y\geq 0$, and the fact that $-\frac{1}{r}-\frac{1}{p^*}=\gamma-1\leq 0$, the last two displays imply that \whp, we have
\begin{align}\label{l2}
\begin{split}
    \frac{1}{\mathcal L_2(A)}&\geq \left(n^{\frac{1}{\gamma}+1} \mu^{\frac{1}{\gamma}}+\sum_{i,j\in[n]}|a_{ij}|^{\frac1\gamma}+o\left(b_n^{\frac{1}{\gamma}}\right)\right)^{\gamma-1}\\
    &\geq n^{-\frac1\gamma+\gamma}\mu^{-\frac1\gamma+1}+\left(\gamma-1\right) n^{-\frac{2}{\gamma}+\gamma-1}\mu^{-\frac{2}{\gamma}+1}\sum_{i,j\in[n]}|a_{ij}|^{\frac1\gamma}+o\left(n^{-\frac{2}{\gamma}+\gamma-1}b_n^{\frac1\gamma}\right).
\end{split}
\end{align} 
To complete the proof, combine \eqref{rp-sum}, 
\eqref{l1} and \eqref{l2} with the definition of $\mathcal L(A)$ in \eqref{d:lower}.
\end{proof}
\subsubsection{Intuition behind the lower bound}
\label{rem:lbound}

We now provide some intuition into the lower bound in \Cref{l:lower}.    
Although the matrix of interest is $\mu\bm 1\bm 1^\top + A^{\mathsf{int}}+ A^{\mathsf{la}}$, for simplicity we focus only on the largest entries and the centering, and discuss a lower bound for the simpler modified matrix $ A^{\mathsf{la}}_{\mu,n}\coloneqq |\mu|\bm 1\bm 1^\top +| A^{\mathsf{la}}|$.  Since this matrix is nonnegative, Boyd's nonlinear power method can be used to arrive at an approximation to its $r \to p$ norm. 
Indeed, recall the definitions of $\Psi_q$ in \eqref{def:psi}, and define the operator $S:\mathbb{R}^n \to \mathbb{R}^n$ by 
\begin{equation}
\begin{aligned}
    S\bm x\coloneqq \Psi_{r^*}( A^{\mathsf{la}}_{\mu,n}\Psi_p( A^{\mathsf{la}}_{\mu,n}\bm x)).
\end{aligned}
\end{equation}
By the nonlinear power method, if $\bm v^{(0)}$ has all positive entries, and we define $\bm v^{(k+1)}\coloneqq S\bm v^{(k)}$, then it is known that (see \cite[Section~4.1]{dhara2020r})
\begin{equation}\label{power}
\begin{aligned}
    \lim_{k\rightarrow\infty}\frac{\left\| A^{\mathsf{la}}_{\mu,n}\bm v^{(k)}\right\|_p}{\left\|\bm v^{(k)}\right\|_r} =\left\| A^{\mathsf{la}}_{\mu,n}\right\|_{r\rightarrow p}.
\end{aligned}
\end{equation}
If $A_{\mu,n}$ were light-tailed, a natural choice for a near optimizer would  be the all one vector or one iteration of it under $S$,  as considered in  \cite{dhara2020r}.   However, this vector is far from optimal in the heavy-tailed setting, and it is {\em a priori} unclear how to identify a near optimizer.   Towards this end, 
we leverage two simple observations. First, by the definition of $ A^{\mathsf{la}}$ in \eqref{a-split} and the choice of $\zeta$ in \eqref{eta-para2}, we have the following property,
\begin{equation}\label{fact1}
\begin{aligned}
    \text{for each $i\in[n]$, we either have $(| A^{\mathsf{la}}|\bm 1)_i=0$ or $\mu n\ll (| A^{\mathsf{la}}|\bm 1)_i$}.
\end{aligned}
\end{equation}
Second, by the symmetry of $ A^{\mathsf{la}}$ and the fact that, \whp, each row of $ A^{\mathsf{la}}$ has at most one nonzero entry (as a consequence of \Cref{lemma:typicalBehaviour} (b)), for all $q>0$, we have
\begin{equation}\label{fact2}
\begin{aligned}
    | A^{\mathsf{la}}|(| A^{\mathsf{la}}|\bm 1)^q = (| A^{\mathsf{la}}|\bm 1)^{q+1}.
\end{aligned}
\end{equation}
Starting from an initial condition of the form $\bm v^{[k]}\coloneqq(\mu n)^{k}+\left(| A^{\mathsf{la}}|\bm 1\right)^{k}$ for some $k\geq 0$, and then using \eqref{fact1} and \eqref{fact2} repeatedly, it is not hard to see that 
\begin{equation}
\begin{aligned}
    S\bm v^{[k]} \approx (n|\mu|)^{\frac{p}{r-1}+\frac{(p-1)k}{r-1}}+\left(| A^{\mathsf{la}}|\bm 1\right)^{\frac{p}{r-1}+\frac{(p-1)k}{r-1}}=\bm v^{\left[\frac{p}{r-1}+\frac{(p-1)k}{r-1} \right]}.
\end{aligned}
\end{equation}
Therefore, the operation of $S$ on this class of vectors effectively acts as the transformation  
$k\mapsto \frac{p}{r-1}+\frac{(p-1)k}{r-1}$ 
on the exponents of each of the two terms. 
It is easy to see that the iteration of such a  map converges to a unique fixed point of the form $\bm v^{[k_*]}$, where $k_* = \frac{p}{r-p}$.
In view of \eqref{power}, it is natural to choose the near optimizer to be equal to this limit: 
\begin{equation}
\begin{aligned}
    \hat{\bm x} \coloneqq (n|\mu|)^{\frac{p}{r-p}}+\left(| A^{\mathsf{la}}|\bm 1\right)^{\frac{p}{r-p}}.
\end{aligned}
\end{equation}
Then the dual formulation \eqref{eq:rtop-dual} of the $r \to p$ norm suggests that $\hat{\bm y}$ should be set equal to 
\begin{align}\label{determine-y}
\begin{split}
    \argmax_{\bm y } \left\langle \bm y, A^{\mathsf{la}}_{\mu,n}\hat{\bm x}\right\rangle. 
\end{split}
\end{align}
Although this vector can be explicitly determined using (the equality case) of H\"older's inequality, we in fact choose $\hat{\bm y}$ in \eqref{hatxy} to be a suitable perturbation of the actual optimizer in \eqref{determine-y} 
can be  shown to be close to the actual optimizer using \eqref{fact1}, and is more convenient for 
 subsequent calculations. 
\subsection{Asymptotic equivalence of upper and lower bounds}\label{ss:matching}
\begin{proof}[Proof of \Cref{thm:rtop3}]
Comparing \Cref{l:upper} and \Cref{l:lower}, we have shown that
\begin{align}\label{matching}
\begin{split}
    \|\mu\bm 1\bm 1^\T+ A^{\mathsf{int}}+ A^{\mathsf{la}}\|_{r\rightarrow p}=n^{1+\gamma}|\mu|+\gamma n^{\gamma-\frac1\gamma}|\mu|^{1-\frac1\gamma}\sum_{i,j\in[n]}\left|a_{ij}\right|^{\frac1\gamma}+o\left(n^{\gamma-\frac1\gamma} b_n^{\frac1\gamma}\right).
\end{split}
\end{align}
Note that condition \eqref{rtop3-cond} implies \eqref{rtop2-cond2}, and in particular, \eqref{later-claim} holds \whp.
Combining \eqref{later-claim} and \eqref{rtop3-cond}, we have \whp,
\begin{align}\label{sss}
\begin{split}
    \| A^{\mathsf{sm}}\|_{r\rightarrow p}=o\left(n^{\gamma-\frac1\gamma} b_n^{\frac1\gamma}\right).
\end{split}
\end{align}
Combining \eqref{matching} and \eqref{sss}, we have \whp
\begin{align}\label{match-all}
\begin{split}
    \|\mu\bm 1\bm 1^\T+A\|_{r\rightarrow p}=n^{1+\gamma}|\mu|+\gamma n^{\gamma-\frac1\gamma}|\mu|^{1-\frac1\gamma}\sum_{i,j\in[n]}\left|a_{ij}\right|^{\frac1\gamma}+o\left(n^{\gamma-\frac1\gamma} b_n^{\frac1\gamma}\right).
\end{split}
\end{align}
Finally, by \cite[Theorem~3.8.2]{Durrett19} and \eqref{rtop3-cond},
\begin{align}\label{fluct-part}
\begin{split}
    b_n^{-\frac{1}{\gamma}}\sum_{i,j\in[n]}\left|a_{ij}\right|^{\frac1\gamma}\xrightarrow{(d)} Y_{\alpha\gamma},
\end{split}
\end{align}
where $Y_{\alpha\gamma}$ is an $\alpha\gamma$-stable random variable. Together, \eqref{match-all} and \eqref{fluct-part} imply the desired result.
\end{proof}

\section{Proofs related to the \texorpdfstring{$\ell_r$}{l-r}-Levy-Grothendieck problem }\label{s:gro}
\subsection{Proof of \texorpdfstring{\Cref{thm:gro1}}{Theorem 2.4}}\label{ss:gro1}
\begin{proof}[Proof of \Cref{thm:gro1}]
Let $a_{*}\coloneqq \max_{i,j}|a_{i,j}|$ and let $i_*,j_*\in[n]$ be the (random) location of the maximal entry of $A$, that is $a_*=|a_{i_*,j_*}|$. 
From \eqref{eqn:a_*Dist}, we have that $a_*=\Theta_\delta(b_n)$ \whp. By \Cref{lemma:typicalBehaviour} (a), this implies $i_*\neq j_*$ \whp. Next, set $\mathbf x\coloneqq2^{-1/r}(\mathbf e_{i_*}+\sgn\left(a_*\right)\mathbf e_{j_*})$. Then by the symmetry of $A$, we have \whp,
\begin{align}\label{easy-lower}
\begin{split}
    M_r(A)\geq \mathbf x^\trans A\mathbf x=2^{1-\frac{2}{r}}a_*.
\end{split}
\end{align}
We now consider two cases.

\noindent\emph{Case 1.} 
Suppose $r\in(1,2]$.
Let $\mathbf v=(v_1,\ldots,v_n)\in\mathbb R^n$ with $\|\bm v\|_r\leq 1$ denote a maximizing vector for the optimization problem \ref{opt1} associated with $M_r(A)$, and let $v_*\coloneqq \max_{i\in[n]}|v_i|$. By the method of Langrange multipliers, it is easy to see that the following relationship must hold:
\begin{align}\label{gro-cond}
\begin{split}
    M_r(A)|v_i|^{r-1}=\left|\sum_{j=1}^na_{ij}v_j\right|.
\end{split}
\end{align}
We claim that \whp,
\begin{align}\label{entry-bound}
\begin{split}
    v_*\leq 2^{-1/r}(1+o(1)).
\end{split}
\end{align}
To see why the claim is true, let $k_*\in[n]$ satisfy $v_{*}=|v_{k_*}|$. By \eqref{gro-cond} we have
\begin{align}\label{gro-cond1}
\begin{split}
    M_r(A)v_*^{r-1}=M_r(A)|v_{k_*}|^{r-1}=\left|\sum_{j\in[n]}a_{k_*j}v_j\right|.
\end{split}
\end{align}
Let $\delta(\alpha)$ be as defined in \Cref{lemma:typicalBehaviour} (c). 
Suppose $\max_{j\in[n]}|a_{k_*j}|\leq b_n^{3/4+\delta(\alpha)}$. Then, since $\|\bm v\|_r\leq 1$ implies $\|\bm v\|_{\infty}\leq 1$, \Cref{lemma:typicalBehaviour} (c) combined with $\eqref{gro-cond1}$ yields
\begin{align}\label{v1}
\begin{split}
    M_r(A)v_*^{r-1}\leq \sum_{j=1}^n|a_{k_*j}|=o(b_n).
\end{split}
\end{align}
On the other hand, suppose there exists $k'\in[n]$ such that $|a_{k_*k'}|>b_n^{3/4+\delta(\alpha)}$. Then by properties (a) and (b) of \Cref{lemma:typicalBehaviour}, \whp, there exists a unique such $k'$, and further $k'\neq k_*$. Moreover, \Cref{lemma:typicalBehaviour} (c) combined with \eqref{gro-cond1} and the constraint $\|\bm v\|_r\leq 1$, yields \whp
\begin{align}\label{v2}
\begin{split}
    M_r(A)v_{*}^{r-1}\leq |a_{k_*k'}||v_{k'}|+\sum_{j:\,|a_{k_*j}|\leq b_n^{3/4+\delta(\alpha)}}|a_{k_*j}|\leq \left(1-v_*^r\right)^{\frac{1}{r}}a_*\left(1+o(1)\right)+o(b_n) .
\end{split}
\end{align}
Combining \eqref{easy-lower}, \eqref{v1} and \eqref{v2}, we have, \whp
\begin{align}\label{v3}
\begin{split}
    2^{1-\frac{2}{r}}a_{*}v_*^{r-1}\leq M_r(A)v_*^{r-1}\leq \left(1-v_*^r\right)^{\frac{1}{r}}a_*\left(1+o(1)\right)+o(b_n).
\end{split}
\end{align}
Suppose that \whp, $v_*=1-o(1)$. Then \eqref{v3} and the relation $a_*=\Theta_\delta(b_n)$, which holds by \eqref{eqn:a_*Dist}, imply that \whp, $v_*=o(1)$, 
leading to a contradiction. So $v^*\neq 1-o(1)$. Together with \eqref{v3} and $a_*=\Theta_\delta\left(b_n\right)$, this implies that \whp,
\begin{align}
\begin{split}
    2^{1-\frac{2}{r}}v_*^{r-1}\leq (1-v_*^r)^{\frac{1}{r}}\left(1+o(1)\right),
\end{split}
\end{align}
which is equivalent to
\begin{align}\label{tivia-manip}
\begin{split}
    \frac{v_*^{r-1}}{\left(1-v_*^r\right)^{\frac{1}{r}}}\leq 2^{\frac{2}{r}-1}\left(1+o(1)\right).
\end{split}
\end{align}
Note that the map $\phi: x\mapsto \frac{x^{r-1}}{(1-x^r)^{1/r}}$ is increasing for $x\in(0,1)$, and $\phi(2^{-1/r})=2^{2/r-1}$. Together with \eqref{tivia-manip}, this completes the proof of claim \eqref{entry-bound}.

Combining \eqref{gro-cond1} and \eqref{eqn:sumOfTypicalRow}, we have \whp,
\begin{align}
\begin{split}
    M_r(A)v_*^{r-1}\leq v_*\sum_{j=1}^n|a_{k_*j}|\leq v_*a_*\left(1+o(1)\right).
\end{split}
\end{align}
Along with \eqref{entry-bound} and the fact that $r\leq 2$, this implies that \whp,
\begin{align}\label{boring-upper}
\begin{split}
    M_r(A)\leq v_*^{2-r}a_{*}\left(1+o(1)\right)\leq 2^{1-\frac{2}{r}}a_*(1+o(1)).
\end{split}
\end{align}
To complete the proof of $r\in(1,2]$, combine \eqref{easy-lower}, \eqref{boring-upper} and \eqref{eqn:a_*Dist}.

\noindent\emph{Case 2.} Suppose $r=1$. On the one hand, note that the lower bound \eqref{easy-lower} continues to hold. On the other hand, by definition $M_1(A)\leq \inf_{r\in (1,2]}M_r(A)$. Together with \eqref{boring-upper}, this implies that \whp,
\begin{align}\label{tricky-upper}
\begin{split}
    M_1(A)\leq \inf_{r\in(1,2]}M_r(A)=2^{-1}a_*(1+o(1)).
\end{split}
\end{align}
The result follows on combining \eqref{easy-lower}, \eqref{tricky-upper} and \eqref{eqn:a_*Dist}. 
\end{proof}
\subsection{Proof of \texorpdfstring{\Cref{thm:gro2}}{Theorem 2.6} and Theorem \ref{thm:gro2'}}\label{ss:gro2}
\begin{proof}[Proof of \Cref{thm:gro2} and \Cref{thm:gro2'}]
Fix $r\in(2,\infty]$. 
Recall the decomposition $A= A^{\mathsf{sm}}+ A^{\mathsf{int}}+ A^{\mathsf{la}}$ in \eqref{a-split}. Using the triangle inequality and \eqref{eq:rtop-dual}, we have
\begin{align}
\begin{split}
    \left|M_r(A)-M_r( A^{\mathsf{la}})\right|&\leq  \sup_{\|\bm x\|_r=1}\left(\left|\bm x^\T  A^{\mathsf{int}}\bm x \right|+\left|\bm x^\T A^{\mathsf{sm}}\bm x\right|\right)\\
    &\leq \sup_{\|\bm x\|_r=\|\bm y\|_r=1}\left(\bm y^\T A^{\mathsf{int}}\bm x+\bm y^\T A^{\mathsf{sm}}\bm x\right)\\
    &= \| A^{\mathsf{sm}}\|_{r\rightarrow r^*}+\| A^{\mathsf{int}}\|_{r\rightarrow r^*}.
\end{split}
\end{align}
Now set $p\coloneqq \frac{r}{r-1}=r^*$ and $\gamma\coloneqq \frac{r-2}{r}=\frac{1}{p}-\frac{1}{r}$. Then $p^*=r>p\geq 1$, and 
conditions \eqref{gro2-cond1} and \eqref{gro2-cond2} imply that $\gamma$ satisfies conditions \eqref{rtop2-cond1} and \eqref{rtop2-cond2}, respectively. Thus, the results in \Cref{s:rtop2} are valid for $r$ and $p=r^*$. In particular, together with the last display, \Cref{lem:omit-small} and \Cref{lem:omit-middle} imply that \whp, 
\begin{align}\label{small-diff-gro}
\begin{split}
    \left|M_r(A)-M_r( A^{\mathsf{la}})\right|=o(b_n),
\end{split}
\end{align}
under assumptions of \Cref{thm:gro2} or \Cref{thm:gro2'}.
Together with \Cref{lem:fluct-large}, this completes the proof.
\end{proof}
\subsection{Proof of \texorpdfstring{\Cref{thm:gro3}}{Theorem 2.8}}\label{ss:gro3}
\begin{proof}[Proof of \Cref{thm:gro3}]
Fix $r\in(2,\infty), \mu>0$ and set $p=r^*$.
Recall the decomposition of $A= A^{\mathsf{sm}}+ A^{\mathsf{int}}+ A^{\mathsf{la}}$ in \eqref{a-split} and let $ A^{\mathsf{la}}_{\mu}\coloneqq \mu\bm 1\bm 1^\T+ A^{\mathsf{int}}+ A^{\mathsf{la}}$. Note that \eqref{gro3-cond} implies \eqref{rtop2-cond2} with $\gamma=\frac{1}{p}-\frac{1}{r}=\frac{r-2}{r}$.
Hence, \eqref{later-claim} of \Cref{l:later-claim} and the condition $r>2$ imply that
\begin{align}\label{eq6000}
\begin{split}
    \|  A^{\mathsf{sm}}\|_{r\rightarrow r^*}=O_\delta\left(n^{\frac{1}{r^*}-\frac{1}{r}+\frac{1+\eta}{\alpha}}\right)=o\left(n^{\frac{r-2}{r}-\frac{r}{r-2}}b_n^{\frac{r}{r-2}}\right),
\end{split}
\end{align}
where the last equality uses \eqref{gro3-cond}.
With $A_\mu\coloneqq\mu\bm 1\bm 1^\T+A$, by the duality formula \eqref{eq:rtop-dual} and \eqref{eq6000}, we have \whp,
\begin{align}\label{eq600}
\begin{split}
    \left|M_r(A_\mu)-M_r\left( A^{\mathsf{la}}_\mu\right)\right|
    \leq \sup_{\|\bm x\|_r=1}\left|\bm x^\T  A^{\mathsf{sm}}\bm x\right|
    \leq \sup_{\|\bm x\|_r=\|\bm y\|_r=1}\bm y^\T A^{\mathsf{sm}}\bm x
    =\| A^{\mathsf{sm}}\|_{r\rightarrow p}
    =o\left(n^{\frac{r-2}{r}-\frac{r}{r-2}}b_n^{\frac{r}{r-2}}\right).
\end{split}
\end{align}
Using \eqref{ineq:dual}, we have
\begin{align}\label{eq601}
\begin{split}
    M_r\left( A^{\mathsf{la}}_\mu\right)\leq \| A^{\mathsf{la}}_\mu\|_{r\rightarrow r^*}.
\end{split}
\end{align}
Since \eqref{gro3-cond} implies \eqref{rtop3-cond} with $\gamma=\frac{1}{p}-\frac{1}{r}=\frac{r-2}{r}$, by \Cref{l:upper} and \eqref{eq601}, we have \whp,
\begin{align}\label{gro-upper}
\begin{split}
    M_r\left( A^{\mathsf{la}}_\mu\right)\leq n^{2-\frac{2}{r}} \mu+\frac{r-2}{r} \mu^{-\frac{2}{r-2}} n^{\frac{r-2}{r}-\frac{r}{r-2}} \sum_{i, j=1}^n\left|a_{i j}\right|^{\frac{r}{r-2}}+o\left(n^{\frac{r-2}{r}-\frac{r}{r-2}}b_n^{\frac{r}{r-2}}\right).
\end{split}
\end{align}
Recall the definition of $\hat{\bm x},\hat{\bm y}$ in \eqref{hatxy}. When $p=r^*$, we have $\hat{\bm x}=\hat{\bm y}$, and hence,
\begin{align}\label{eq602}
\begin{split}
    M_r\left( A^{\mathsf{la}}_\mu\right)\geq \frac{\hat{\bm x}^\T  A^{\mathsf{la}}_\mu\hat{\bm x}}{\|\hat{\bm x}\|_r^2}.
\end{split}
\end{align}
Combining \eqref{eq602}, the definition of $\mathcal L(A)$ in \eqref{d:lower} and \Cref{l:lower1}, we have \whp,
\begin{align}\label{gro-lower}
\begin{split}
    M_r\left( A^{\mathsf{la}}_\mu\right)\geq n^{2-\frac{2}{r}} \mu+\frac{r-2}{r} \mu^{-\frac{2}{r-2}} n^{\frac{r-2}{r}-\frac{r}{r-2}} \sum_{i, j=1}^n\left|a_{i j}\right|^{\frac{r}{r-2}}+o\left(n^{\frac{r-2}{r}-\frac{r}{r-2}}b_n^{\frac{r}{r-2}}\right).
\end{split}
\end{align}
Combining \eqref{eq600}, \eqref{gro-upper} and \eqref{gro-lower}, we have \whp,
\begin{align}\label{gro-match}
\begin{split}
    M_r(A_\mu)=n^{2-\frac{2}{r}} \mu+\frac{r-2}{r} \mu^{-\frac{2}{r-2}} n^{\frac{r-2}{r}-\frac{r}{r-2}} \sum_{i, j=1}^n\left|a_{i j}\right|^{\frac{r}{r-2}}+o\left(n^{\frac{r-2}{r}-\frac{r}{r-2}}b_n^{\frac{r}{r-2}}\right)
\end{split}
\end{align}
Finally, \cite[Theorem~3.8.2]{Durrett19} implies that
\begin{align}
\begin{split}
    \sum_{i,j\in[n]}\left|a_{ij}\right|^{\frac{r}{r-2}}\xrightarrow{(d)} Y_{\frac{\alpha(r-2)}{r}},
\end{split}
\end{align}
where $Y_{\frac{\alpha(r-2)}{r}}$ is an $\frac{\alpha(r-2)}{r}$-stable random variable. Together with \eqref{gro-match}, this completes the proof.
\end{proof}
\subsection{Proof of \texorpdfstring{\Cref{cor:ground}}{the ground energy}}\label{ss:spin}
\begin{proof}[Proof of \Cref{cor:ground}]Fix $\alpha\in(0,1)$. Then condition \eqref{gro2-cond1} of \Cref{thm:gro2} is satisfied and hence,  we conclude from that theorem that 
\begin{align}\label{implication}
\begin{split}
    b_n^{-1}\sup_{\|\bm x\|_\infty\leq 1}\bm x^\T A_n\bm x\xrightarrow{(d)} Y_\alpha,
\end{split}
\end{align}
where $Y_\alpha$ is an $\alpha$-stable random variable. Let $\Lambda_n\in\mathbb R^{n\times n}$ denote the diagonal of $A_n$. Then we have \whp,
\begin{align}\label{diag-small}
\begin{split}
    \|\Lambda\|_{\infty\rightarrow 1}=\sup_{\|\bm x\|_\infty=\|\bm y\|_{\infty}=1} \sum_{i\in[n]}a_{ii}x_iy_i\leq \sum_{i\in[n]}\left|a_{ii}\right|=O_\delta\left(n^{\frac{1}{\alpha}}\right),
\end{split}
\end{align}
where the last equality uses \cite[Theorem~3.8.2]{Durrett19}. 
Define $\mathring A_n\coloneqq A_n-\Lambda_n$. Combining \eqref{implication}, \eqref{diag-small} and \eqref{eqn:order-bn}, we have
\begin{align}\label{diag-free}
\begin{split}
    b_n^{-1}\sup_{\|\bm x\|_\infty\leq 1}\bm x^\T \mathring A_n\bm x\xrightarrow{(d)}Y.
\end{split}
\end{align}

Now we argue that the supremum in \eqref{diag-free} must be obtained at the extremal points $\left\{-1,1\right\}^n$ of the $n$-dimensional hypercube. To see this, pick a point $\bm x\in\R^n$ with $\|\bm x\|_\infty\leq 1$. 
Since $\mathring A_n$ is diagonal free, the map $\bm x\mapsto \bm x^\T \mathring A_n\bm x$ is linear in $x_i$ for each $i\in[n]$. Hence, we can iteratively replace $x_i$ by either $+1$ or $-1$ without decreasing $\bm x^\T\mathring A_n\bm x$ and the terminal point of this iterative procedure would be a point in $\{-1,1\}^n$.
Given the above argument, \eqref{diag-free} implies that
\begin{align}\label{extreme}
\begin{split}
    b_n^{-1}\sup_{\bm x\in\left\{-1,1\right\}^n}\bm x^\T \mathring A_n\bm x\xrightarrow{(d)}Y.
\end{split}
\end{align}
Once again invoking \eqref{diag-small} and \eqref{eqn:order-bn}, we conclude that
\begin{align}
\begin{split}
    b_n^{-1}\sup_{\bm x\in\left\{-1,1\right\}^n}\bm x^\T A_n\bm x\xrightarrow{(d)}Y.
\end{split}
\end{align}
This completes the proof.
\end{proof}

\bibliographystyle{amsplain}
\bibliography{project_r-p.bib}

\end{document}